\newtheorem{theorem}{Theorem}[section]
\newtheorem{lemma}[theorem]{Lemma}
\newtheorem{proposition}[theorem]{Proposition}
\newtheorem{corollary}[theorem]{Corollary}
\def\beq{\begin{equation}}
\def\eeq{\end{equation}}
\def\le{\leq}
\def\cov{{\rm Cov}}
\def\var{{\rm Var}}
\def\bI{\mathbf{1}}
\def\cA{\mathcal{A}}
\def\cB{\mathcal{B}}
\def\cH{\mathcal{H}}
\def\cI{\mathcal{I}}
\def\cJ{\mathcal{J}}
\def\cM{\mathcal{M}}
\def\cO{\mathcal{O}}
\def\cR{\mathcal{R}}
\def\cS{\mathcal{S}}
\def\tf{\tilde{f}}
\def\hW{\tilde{W}}
\def\tmu{\tilde\mu}
\newcommand{\eps}{\varepsilon}
\def\sab{\mathcal Z_{\alpha,\xi,s}}
\def \E{\mathbf{E}}
\def \P{\mathbf{P}}
\def\({\left(}
\def\rt){\right)}
\def\[{\left[}
\def\]{\right]}
\def \D{\mathbb{D}}
\def \R{\mathbb{R}}
\def \N{\mathbb{N}}
\def \P{\mathbf{P}}
\def \E{\mathbf{E}}
\def \aa{\alpha}
\def \eps{\epsilon}
\def \sign{\text{sign}}
\def \ff{\infty}
\def \({\left(}
\def \){\right)}
\def \lb{\left|}
\def \rb{\right|}
\def \beq{\begin{equation}}
\def \ee{\end{equation}}
\def \bea{\begin{eqnarray}}
\def \eea{\end{eqnarray}}
\def \bes{\begin{eqnarray*}}
\def \ees{\end{eqnarray*}}
\def \nn{\nonumber}
\numberwithin{equation}{section}
\begin{document}

\title{Convergence to $\alpha$-stable L\'evy motion for chaotic billiards with several cusps at flat points
}

\author{Paul Jung$^{\,\rm 1}$ and Fran\c{c}oise P\`ene$^{\,\rm 2}$ and Hong-Kun Zhang$^{\,\rm 3}$\\
{\footnotesize{$^{\rm 1}$Department of Mathematical Sciences, KAIST, Daejeon, South Korea}}\\
{\footnotesize{$^{\rm 2}$Universit\'e de Brest, Institut Universitaire de France,
		LMBA, UMR CNRS 6205, Brest, France}}\\
{\footnotesize{$^{\rm 3}$Department of Mathematics, University of Massachusetts, Amherst, MA, USA}}}
%

\maketitle

\begin{abstract} We consider billiards with several {possibly non-isometric and asymmetric} cusps at flat points; the case of a single {symmetric} cusp was studied previously in \cite{Z2016b} and \cite{JZ17}.  In particular, we show that properly normalized Birkhoff sums of H\"older observables, with respect to the billiard map, converge in Skorokhod's $M_1$-topology to an $\alpha$-stable L\'evy motion, where  $\alpha$  depends on the `curvature' of the flattest points and the skewness parameter $\xi$ depends on the values of the observable at those same points.  
Previously, \cite{JZ17} proved convergence of the one-point marginals to totally skewed $\alpha$-stable distributions {for a symmetric cusp}. The limits we prove here are stronger, since they are in the functional sense, but also allow for more varied behaviour due to the presence of multiple cusps. In particular, the general limits we obtain allow for any skewness parameter, as opposed to just the totally skewed cases. We also show that convergence in the stronger $J_1$-topology is not possible.\end{abstract}

\tableofcontents

\section{Introduction}

 The origins of the modern theory of hyperbolic dynamical systems lie in classical and statistical
mechanics through the study of ergodic and statistical properties. Indeed understanding
statistical properties and proving various probability limit theorems are vital in the study of statistical mechanics.   For example, such studies shed light on the important issue of fluctuations in
entropy production. This paper is a contribution in this direction by showing a mechanism by which an hyberbolic dynamical system can converge on the process level to something other than a Brownian motion.

Begin by letting $T$ be a measure-preserving transformation on $(\cM,\mu)$ and
$\{{X}_n=f\circ T^n\}$ the stochastic process generated by the system for an
observable $f$. One typical goal is to prove the (functional) central limit theorem for the normalized partial-sum process associated to $\{{X}_n\}$ which is itself generated by a  mixing hyperbolic system, i.e., to prove that, weakly 
$$\left(\frac{1}{\sqrt{n}}\sum_{j=1}^{[nt]} {X}_j\right)_{t\ge 0}\to\left( \sigma B(t)\right)_{t\ge 0}$$ where $(B(t))_{t\ge 0}$ is a standard Brownian motion. In other words, the (normalized) partial-sum process converges in distribution, in $C[0,1]$, to a Brownian motion with diffusion parameter $\sigma>0$. The finiteness of the variance/diffusion parameter requires that the covariances $\cov({X}_1,{X}_n)$ are summable in $n\in \mathbb{N}$. However, when the covariance sequence is not summable, the usual central limit theorem for the partial-sum process fails; and there are very few results, for hyperbolic systems, which investigate what happens under such failure.

Billiards with cusps were first introduced in the early 80's in \cite{machta1983power}, but it was not until \cite{chernov2007dispersing} that a rigorous polynomial decay of correlations of $\cO(1/n)$ was proved for Machta's original model (where cusps are formed by tangential circles).  This polynomial decay of correlations led to a nonstandard central limit theorem where a normalization of $\sqrt{n\log n}$ was used instead of $\sqrt{n}$ (see also \cite{balint2011limit}).

In \cite{Z2016b}, the author was able to construct a hyperbolic billiard model with arbitrarily slow decay rates of correlations, of order $$\cov({X_1,X_n})=\cO(n^{-a}),$$ with $a\in (0,1)$
depending on the curvature at the cusp's vertex.  In \cite{JZ17}, the first and last authors were able to prove that due to the slow decay of correlations, instead of a central limit theorem, rather, a stable limit theorem holds for the partial-sum process generated by a billiard with a single
symmetric cusp at a flat point. In particular, they showed that the limiting distribution is a totally skewed $\alpha$-stable law. In this paper, we first investigate the convergence to a stable law, for the case when the billiard table has several, possibly asymmetric, cusps at flat points. We then extend these results to a functional limit theorem in Skorokhod space, i.e.,  convergence to an $\alpha$-stable L\'evy motion. In the dynamical systems literature, functional convergence to L\'evy motion has been shown for expanding maps (see for instance \cite{tyran2010weak}), however this is the first result of this kind for 2-d hyperbolic billiards that we are aware of (after finishing this work, we were made aware of the concurrent work \cite{melbourne2018convergence} where a  functional limit theorem is also proved for the model in \cite{JZ17}.).

Let us now describe in detail the billiards with cusps at flat points that we consider here.  The dispersing billiard table $Q$ is a bounded domain of $\mathbb R^2$, the boundary of which consists of  $q\geq 3$ dispersing $C^3$ smooth curves 
$\partial Q= \cup_{i\in \mathbb Z/q\mathbb Z} \Gamma_i$, numbered in clockwise order. 
The intersections $P_i=\Gamma_i\cap\Gamma_{i+1}$ of two consecutive curves $\Gamma_i$ and
$\Gamma_{i+1}$ consist of either standard corner points (i.e. the tangent lines at $P_i$ to $\Gamma_i$ and $\Gamma_{i+1}$ do not coincide)  or `cusps' at flat points. This means that, in an appropriate euclidean coordinate system $(s,z)$ originated at some point $P_i$, the two curves $\Gamma_i$ and $\Gamma_{i+1}$ can be represented respectively
as $z=z_{i,+}(s)$ and $z=z_{i,-}(s)$ with $z_{i,\pm}$ differentiable and satisfying
\beq\label{z1s}
z_{i,\pm} (s)=\pm \frac{C_{i,\pm}}{\beta_{i}}s^{\beta_{i}}+\cO (s^{2\beta_i-1})\quad\mbox{and}\quad
z'_{i,\pm} (s)=\pm C_{i,\pm}s^{\beta_{i}-1}+\cO (s^{2\beta_i-2})\, , \quad\mbox{for}\ s\in [0,\eps]
\eeq
with $\eps>0$ some small fixed number, $C_{i,-},C_{i,+}\ge 0$ not both null and with $\beta_i\ge 2$. We will say that such a cusp at $P_i$ has flatness $\beta_i$.    
We also define
$$\bar C_i:=(C_{i,+}+C_{i,-})/2\, .
$$
We assume moreover that, for every cusp at some $P_i$, the (unique) tangent trajectory coming out of the cusp at $P_i$
hits $\partial Q$ outside of any another cusp.
Also, all boundary components are assumed to be dispersing and have curvature bounded away from zero except at the cusps $P_i$'s.
Throughout the paper, we assume that
\begin{align}\label{eq:beta star}
\beta_*:=\max(\beta_i: P_i \text{ is a cusp})>2\, .
\end{align}
The functional limit theorem we prove here will depend only on the observable near the `maximally flat' points $P_i$ 
(the heuristic reason for this is that the limit theorem is dominated by long sequences of collisions inside the various cusps, and the flattest cusps trap the longest sequences ). 
Let $\mathcal J$ be the set of labels $i$ such that $P_i$ is a cusp and let 
$\mathcal J_*$ be the set of labels $i\in\mathcal J$ such that 
$\beta_i=\beta_*$.


\begin{figure}[h]
\centering
 \includegraphics[height=2.2in]{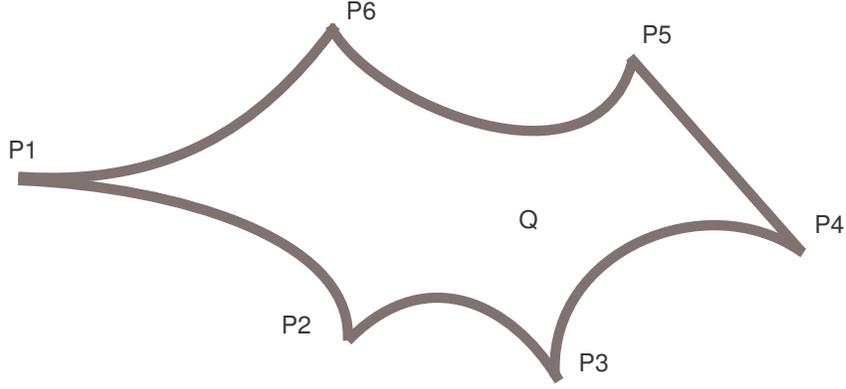}
 \caption{A billiard table with several cusps at flat points.}\label{Fig1}
 \end{figure}


The usual billiard flow is defined on the unit sphere bundle $Q\times\mathbf{S}^1$ and preserves
the Liouville measure, see \cite{chernov2006chaotic} for details. We consider the natural cross section $\cM\subset Q\times	 \mathbf{S}^1$ made of all post-collision vectors based at the boundary of the table $\partial Q$.
Any post-collision vector $x\in \cM$ can be represented by $x=(r, \varphi)$, where $r\in\mathbb R/|\partial Q|\mathbb Z$ is 
the {clockwise} curvilinear abscissa
along $\partial Q$ (where $|\partial Q|$ denotes the length of $\partial Q$),	 and $\varphi\in [0, \pi]$ is the angle formed by  the tangent line of the boundary and the collision vector in the clockwise direction. Therefore the collision space $\cM$ is identified with 
$\mathbb R/|\partial Q|\mathbb Z \times [0,\pi]$.
The corresponding  billiard map $  T : \cM\to \cM$ takes a vector $x\in \cM$ to the next post-collision vector	 along the trajectory of $x$.
Let the set $S_0$ consist of all grazing collision vectors with walls as well as all collision vectors at corner points/cusps. Then $\cS_0:=S_0\cup   T       ^{-1}S_0$ is  the singular set of $T$.
The billiard map $T: \cM\setminus	\cS_0\to \cM\setminus   T       \,\cS_0$ is a local $C^{2}$ diffeomorphism and preserves a natural
absolutely continuous probability measure $$d\mu=\frac{1}{2|\partial Q|} \sin\varphi\, dr\, d\varphi$$ on the collision space $\cM=\{(r,\varphi)\in\mathbb R/|\partial Q|\mathbb Z\times[0,\pi]\}$.

A recently popular approach for studying the statistical properties of $(T  ,\cM)$ is to use an inducing scheme as introduced in \cite{markarian2004billiards, chernov2005billiards}. By removing spots with weak hyperbolicity from the phase space, one considers the first return map on some subspace $M\subset \cM$.   For any $x\in M$,  the {first return time function} is defined by
$$\cR(x) := \min\{n\geq	 1:   T^n(x)\in M\}$$ and the  {\em (first) return map} $F\colon M \to M$ is defined by
\beq \label{Fdef}
F(x) :=   T  ^{\cR(x)}(x),\,\,\,\,\,\text{for all } x\in M.
\eeq
The return map $F$ preserves the conditional measure $\tmu:=\frac{1}{\mu(M)}\mu|_M$ and we define an  {\em induced} function by
\beq\label{inducef}
\tf(x) :=\sum_{k=0}^{\cR(x)-1}
f(T^k x), \quad x\in M.
\eeq
Here we define the subset $M$ to be the subset of $\cM$  which consists of all collisions occurring on
\begin{equation}\label{Mdef}
\partial Q\setminus(\cup_{i\in\mathcal J} B_\epsilon(P_i))
\end{equation}
where $B_\epsilon(P_i)\cap\partial Q$ admits a representation as in \eqref{z1s} and where $\epsilon$ is small enough so that
a billiard trajectory cannot go directly from one $B_\epsilon(P_i)$
to another $B_\epsilon(P_i)$ without hitting $\partial Q\setminus(\cup_{k\in\mathcal J} B_\epsilon(P_k))$.
We decompose {$$M=M_0\cup \bigcup_{i\in\mathcal J} M_i$$ so that for $i\in\mathcal J$, $M_i$ is made up of those points in $M$ 
	whose first collision inside $\cup_{j\in\mathcal J}B_\epsilon(P_j)$} (see \eqref{Mdef}),  under the map $F$, occurs
 inside $B_\epsilon(P_i)$. Moreover, $M_0=M\setminus \cup_{i\in\mathcal J} M_i$ consists of the remaining points which do not immediately enter any cusps.

Rigorous bounds on the decay of correlations for billiards with  flat points were derived
in \cite{Z2016b}, where a more detailed description of
billiards with flat points is also given. It was shown that if $f, g$ are H\"older continuous functions
 on the collision space $\cM$, then for all $n\in \mathbb{Z}$,
\beq\label{cCn}\mu(f\circ   T^n\cdot g)-\mu(f)\mu(g)= \cO(n^{-\frac{1}{\beta_*-1}}).\eeq
Here we use the standard notation $\mu(f)=\int_{\cM} f\,d\mu$, which we henceforth assume to be $0$ for $f$ and $g$. As already mentioned, it is the above slow decay of correlations that led to a stable limit theorem in \cite{JZ17}, rather than the standard central limit theorem,
for a normalized version of the Birkhoff sums
$$ \cS_n f := f + f\circ   T   +\cdots +f\circ   T  ^{n-1}\, ,$$
where $f$ is a H\"older function on $\cM$ and $f\neq 0$ at the cusp's vertex. Our results will show that for a general dispersing billiard table with multiple cusps, only the $|\mathcal J_*|\geq 1$ maximally flat cusps will contribute to the limit of properly normalized Birkhoff sums. In other words, we show that cusps with smaller order flatness will not `contribute' to the limiting stable law (nor to the functional-level convergence to a L\'evy process). Although the analysis in \cite{JZ17} can be carried to a general table to study statistical properties for observables supported  on any individual symmetric cusp, the convergence to a stable law jointly for multiple asymmetric cusps here is rather different and requires significant adjustments to the original arguments. 
%

In order to consider a functional limit theorem, denote the process $\{W_n(t):t\geq 0\}$, $n\geq 1$ by
\beq\label{defnXnt}
W_n(t):=\sum_{j=0}^{[nt]-1}\frac{ f\circ T^j}{n^{1/\alpha}},\,\,\,\,\text{for } t\geq 0,\quad\mbox{where}\ \alpha:=\frac{\beta_*}{\beta_*-1}\, .
\eeq
One can check that $\alpha \in (1,2)$ since $\beta_*>2$. 
In \cite{melbourne2015weak}, a general result was proved for obtaining functional convergence of dynamical systems to an $\alpha$-stable L\'evy motion by ``lifting''
such a limit law from an induced dynamical system to the original
system. 	Our functional limit theorem for $\{W_n(t):t\ge 0\}$, $n\geq 1,$ will be proved in two steps: first showing that the induced system satisfies the functional limit theorem, utilizing standard methods from \cite{durrett1978functional}, and then applying the lifting principle of \cite{melbourne2015weak}.

The rest of the paper is organized as follows. In the next section we state our main result. In Section \ref{sec:top-level proof} we give the top-level proof of our main result as just described in the previous paragraph. In Section \ref{sec:proof} we provide the details of the limit theorem for the induced system via the mechanism of   \cite{durrett1978functional}. In the appendix we present the additional technical details needed to handle the general setting of asymmetric cusps we consider here, and also provide  (for any reader who is seeing these for the first time) a very brief introduction to the Skorokhod $J_1$ and $M_1$ topologies.
\section{Main result}

A function $f$ with $\mu(f)=0$ is said to be in the domain of attraction of a (strictly) $\alpha$-stable law if there exists $\{b_n\}$ such that $\{\frac{\cS_nf}{b_n}\}$ converges in distribution to a random variable with an $\alpha$-stable law. Here, {\em strictly} simply means that $\mu(f)=0$, and we shall henceforth just say $\alpha$-stable.
 Contrary to central limit behavior and the Gaussian case of $\alpha=2$, it
 is well-known that even though we have a mean of zero, the limiting distribution may not be symmetric.
In particular, there exists a {\em skewness parameter} $\xi\in [-1,1]$ and $s>0$ such that
the limit stable random variable $\mathcal Y$ satisfies
\beq\label{tailstable}
\lim_{x\to\infty}x^\alpha\P(\mathcal Y>x)=C_\alpha s^\alpha\frac{1+\xi}{2} \text{ and }\lim_{x\to-\infty}|x|^\alpha\P(\mathcal Y<x)=C_\alpha s^\alpha\frac{1-\xi}{2}\, ,
\eeq
with $C_\alpha$ given by
$$C_\alpha=\frac{1}{\Gamma(1-\alpha)\cos(\pi\alpha/2)},$$
see \cite[p.17]{samorodnitsky1994stable}.
We will henceforth denote by $\sab$ a stable random variable
{with tail distribution satisfying \eqref{tailstable},} i.e. with characteristic function
\beq
\E\(e^{i u \mathcal Z_{{\alpha,\xi,s}}}\right)=\exp\(-
|us|^\alpha\(1-i\xi \text{sign}(u)\tan\frac{\pi\alpha}{2}\right)\right)\, ,\quad u\in\mathbb R.
\eeq
For any $\gamma\in (0,1)$, we denote $\cH_{\gamma}$ as the class of all H\"older continuous functions {$f:\mathcal M\setminus S_0\to\R$}, with H\"older exponent $\gamma$. 
Let us write $\tilde r_i$ for the curvilinear abscissa of the cusp position $P_i$. We define
$$\forall \varphi\in[0,\pi],\quad \tilde f_{i,+}(\varphi):=\lim_{r\rightarrow \tilde r_i+}f(r,\varphi) \quad\mbox{and}\quad \tilde f_{i,-}(\varphi):=\lim_{r\rightarrow \tilde r_i-}f(r,\varphi)\, ,$$
so that $\tilde f_{i,-}$ (resp. $\tilde f_{i,+}$) corresponds to the limit function on $P_i\times[0,\pi]$ of $f|_{\Gamma_i}$ (resp.  $f|_{\Gamma_{i+1}}$).
We also define
\beq
I_{f,i}=I_{f,i,\alpha}:=\frac 14\int_{0}^{\pi} (\tilde f_{i,-}(\varphi)+\tilde f_{i,+}(\varphi))\sin^{\frac{1}{\alpha}}\varphi\, d\varphi\, .
\eeq

\begin{theorem}[Stable limit theorem for billiard with cusps]\label{thm:1}
Let $Q$ be a  billiard table as described above, with cusps defined by (\ref{z1s}). Suppose $f\in \cH_{\gamma}$ for some $\gamma>0$ satisfying $\mu(f)=0$ and suppose there exists some $i\in\mathcal J_*$ such that $I_{f,i}\neq 0$.
Then as $n\to\infty$,
\begin{equation}\label{stablelaw}
\frac{\cS_{n} f}{n^{1/\alpha}}\xrightarrow{d} 
\sum_{i\in \mathcal J_*,\ I_{f,i}\neq 0}
 {\cal Z}_{\alpha,\xi_{f,i}, \frac{\sigma_{f,i}}{
 C_\alpha^{1/\alpha}
 }
 }\, ,
\end{equation}
where the limit is a  sum of independent   stable  variables with
$$\sigma_{f,i}^\alpha:=\frac{2}{|\partial Q|}\cdot \frac{I_{f,i}^\alpha}{\beta\bar C_i^{\alpha-1}}\,\,\,\,\,\,\text{ and }\,\,\,\,\,\, \xi_{f,i}:=\sign (I_{f,i}).$$
\end{theorem}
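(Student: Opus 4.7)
The strategy is the standard two-step scheme for this kind of limit theorem: first establish a stable limit theorem for the Birkhoff sums of the induced observable $\tilde f$ under the first-return map $F:M\to M$, then lift the conclusion to the original system $(T,\cM,\mu)$ via the abstract transfer principle of \cite{melbourne2015weak}. The integrability hypothesis $\int\cR\, d\tmu=1/\mu(M)<\infty$ needed for the lifting is immediate from Kac's formula, so essentially all the work lies in establishing the stable CLT for $(F,\tilde f)$.

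The heart of the matter is a sharp tail estimate for $\tilde f\cdot\mathbf{1}_{M_i}$ on each cusp piece. Standard cusp geometry from \cite{Z2016b} shows that a trajectory entering the $i$-th cusp at post-collision angle $\varphi$ undergoes $\cR(x)\asymp \varphi^{-1/(\beta_i-1)}$ successive collisions before escaping, during which the collision angle remains essentially constant and the collision points alternate between $\Gamma_i$ and $\Gamma_{i+1}$ while converging to $P_i$. Consequently, for typical $x\in M_i$,
$$\tilde f(x)\;\approx\;\tfrac12 \cR(x)\bigl(\tilde f_{i,+}(\varphi)+\tilde f_{i,-}(\varphi)\bigr),$$
and combining this with the invariant density $\propto\sin\varphi\,dr\,d\varphi$ together with the scaling $\varphi\asymp n^{-(\beta_i-1)}$ produces a regularly varying tail
$$\tmu\bigl(\pm\tilde f\cdot\mathbf{1}_{M_i}>u\bigr)\;\sim\;c_{i,\pm}\,u^{-\alpha_i},\qquad \alpha_i:=\tfrac{\beta_i}{\beta_i-1}.$$
For $i\in\mathcal J\setminus\mathcal J_*$ one has $\alpha_i>\alpha$ so those cusps are washed out, and on $M_0$ the function $\tilde f$ is bounded. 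For $i\in\mathcal J_*$ the exponent is the desired $\alpha$, and a direct calculation matches $c_{i,\pm}$ with $C_\alpha\sigma_{f,i}^\alpha(1\pm\xi_{f,i})/2$, explaining simultaneously the scale $\sigma_{f,i}$ and the skewness $\xi_{f,i}=\sign(I_{f,i})$. The main obstacle is the genuinely asymmetric setting $C_{i,+}\neq C_{i,-}$ together with possibly distinct side-limits $\tilde f_{i,\pm}$, which requires refining the symmetric analysis of \cite{JZ17}: one must show that the distribution of the escape angle is controlled by the averaged curvature $\bar C_i$, while the observable enters only through the symmetric combination $\tilde f_{i,+}+\tilde f_{i,-}$. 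This asymmetric averaging is what produces a general skewness $\xi_{f,i}\in[-1,1]$ instead of the totally skewed $\pm 1$ of the symmetric case.

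With the regularly varying tails in hand, and together with the exponential decay of correlations of $F$ on a suitable Young tower, I would apply the Durrett--Resnick point-process criterion \cite{durrett1978functional} to $\{\tilde f\circ F^j\}$ under $\tmu$. Since the sets $M_i$ are pairwise disjoint and a single excursion visits at most one cusp, the big-jump point processes associated with different $i\in\mathcal J_*$ decouple asymptotically, yielding independent stable summands
$$\frac{1}{N^{1/\alpha}}\sum_{j=0}^{N-1}\tilde f\circ F^j\;\xrightarrow{d}\;\sum_{\substack{i\in\mathcal J_*\\ I_{f,i}\neq 0}}\mathcal Z_{\alpha,\xi_{f,i},\,s_{f,i}}.$$
Finally the Melbourne--Zweim\"uller lifting theorem converts this into \eqref{stablelaw} for $\cS_n f/n^{1/\alpha}$ under $\mu$; the renormalization coming from the time change $N\approx n\mu(M)$ is absorbed exactly into the stated scale $\sigma_{f,i}/C_\alpha^{1/\alpha}$.
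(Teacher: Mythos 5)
Your high-level roadmap—reduce to the induced map $F$ on $M$, establish a regularly-varying tail for the induced observable, run the Durrett--Resnick point-process machinery, then lift—matches the architecture of the paper. But the geometric heuristic you use to derive the tail is incorrect in a way that would prevent you from obtaining either the tail exponent $\alpha_i$ or the constant involving $I_{f,i}$, which are precisely what the theorem asserts.

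You claim that during a corner series the collision angle ``remains essentially constant'' and that $\cR\asymp\varphi^{-1/(\beta_i-1)}$, whence $\tilde f(x)\approx\tfrac12\cR(x)\bigl(\tilde f_{i,+}(\varphi)+\tilde f_{i,-}(\varphi)\bigr)$. Neither claim is true. In a corner series of length $\approx 2N$, the collision angle $\gamma_n$ \emph{sweeps} from a small value $\asymp N^{-\beta/(2\beta-1)}$ at the first reflection up to $\approx\pi/2$ near the midpoint and then back down (Proposition~\ref{prop3}), and the points first approach $P_i$ and then \emph{recede}. The quantity that stays nearly constant along the excursion is not $\varphi$ but $H(r,\varphi)=|r|^\beta\sin\varphi\approx C_N\asymp N^{-\alpha}$ (Proposition~\ref{prop3}(6)). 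As a result $\tilde f(x)$ is not $\cR(x)$ times a value of $\tilde f_{i,\pm}$ at one fixed angle; it is a Riemann-type sum over the sweeping angle, and the elliptic substitution $w_n=\int_0^{\gamma_n}(\sin u)^{1-1/\beta}\,du$ (which grows linearly in $n$, see Section~\ref{sec:scale}) is exactly what converts this sum into the integral $I_{f,i}$ with its $\sin^{1/\alpha}\varphi$ weight. If instead one plugs your $\cR\asymp\varphi^{-1/(\beta-1)}$ heuristic into the density $\sin\varphi\,dr\,d\varphi$, one gets $\tilde\mu(\cR>y)\asymp y^{-2(\beta-1)}$ rather than the correct $y^{-\beta/(\beta-1)}$; these agree only when $\beta=2$, which is excluded under hypothesis~\eqref{eq:beta star}. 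Getting the tail right requires working with the conserved $H$ and the Tauberian argument of Appendix~\ref{sec:scale}, and this is precisely where the asymmetric two-sided refinement of the corner-series estimates is needed—not a minor adjustment as you suggest.

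Two further inaccuracies are worth flagging. First, the non-extremal skewness $\xi_f\in(-1,1)$ is \emph{not} produced by the asymmetry $C_{i,+}\neq C_{i,-}$ of a single cusp: each maximally flat cusp contributes a totally skewed summand with $\xi_{f,i}=\sign(I_{f,i})=\pm 1$, and the interior values of $\xi_f$ in \eqref{sigmaxi} arise only when several cusps in $\mathcal J_*$ carry observables of opposite signs. Second, for Theorem~\ref{thm:1} the paper does not go through the Melbourne--Zweim\"uller lifting; it uses a direct argument via the ergodic theorem and reversibility at the start of Section~\ref{sec:top-level proof}. The functional lifting (Proposition~\ref{lem:MZ}) needs the weak-monotonicity condition~\eqref{max_condition}, which is guaranteed only under the additional sign-constancy hypothesis present in Theorem~\ref{thm1} but absent from Theorem~\ref{thm:1}; if you insist on quoting \cite{melbourne2015weak}, you must appeal to their one-dimensional distributional lifting, not the functional one.
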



\begin{theorem}[Functional $\alpha$-stable limit theorem]\label{thm1}
Under the assumptions of Theorem \ref{thm:1} if, for every $i\in\mathcal J_*$, there exists a neighbourhood $\mathcal U_i$ of $P_i\times [0,\pi]$ such that either $f|_{\mathcal U_i}\ge 0$ or $f|_{\mathcal U_i}\le 0$,
then, in addition to the convergence in \eqref{stablelaw},
 $$((W_n(t))_{t\in[0,1]})_n,$$ defined in \eqref{defnXnt}, converges in distribution in the Skorokhod space $\mathbb D([0,1])$ using the
$M_1$-metric, to an  $\alpha$-stable L\'evy motion $(\mathcal Y_t)_{t\in[0,1]}$ (an $\alpha$-stable process with stationary and independent increments) such that
$\mathcal Y_1$ has the same distribution as the right side of \eqref{stablelaw}.
Moreover, this convergence does not hold for the $J_1$-metric.
\end{theorem}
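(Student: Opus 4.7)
The plan is to follow the two-step program announced in the introduction: first establish a functional $\alpha$-stable limit theorem in the $J_1$-topology for the induced system $(M,F,\tilde\mu)$ with induced observable $\tilde f$, and then lift to $M_1$-convergence of $W_n$ under $T$ via the Melbourne--Zweim\"uller principle of \cite{melbourne2015weak}. The sign hypothesis $f|_{\mathcal U_i}\ge 0$ (or $\le 0$) enters precisely at the lifting step: the Melbourne--Zweim\"uller theorem requires that the within-excursion partial sums $\ell\mapsto \sum_{k=0}^{\ell-1} f\circ T^k$ be essentially monotone in $\ell$ along a length-$\mathcal R$ excursion, which turns each induced jump into an $M_1$-admissible ramp (a monotone ramp is $M_1$-close, but not $J_1$-close, to the jump of the same height at the same time). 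The one-sign assumption delivers this monotonicity, up to a bounded number of boundary collisions outside $\mathcal U_i$ whose contribution is negligible after rescaling by $n^{1/\alpha}$.

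For Step~1, I would invoke Durrett--Resnick \cite{durrett1978functional} for stationary weakly-dependent heavy-tailed sequences, applied to $\{\tilde f\circ F^j\}_j$. Two ingredients are required. First, excursion-by-excursion tail asymptotics: for each $i\in\mathcal J_*$,
$$\tilde\mu\bigl(\tilde f\cdot \mathbf{1}_{M_i}>x\bigr)\sim \frac{\sigma_{f,i}^\alpha(1+\xi_{f,i})}{2\,C_\alpha}\,x^{-\alpha},\qquad \tilde\mu\bigl(\tilde f\cdot \mathbf{1}_{M_i}<-x\bigr)\sim \frac{\sigma_{f,i}^\alpha(1-\xi_{f,i})}{2\,C_\alpha}\,x^{-\alpha},$$
with $\sigma_{f,i},\xi_{f,i}$ as in Theorem~\ref{thm:1}, derived from the joint distribution of the return time $\mathcal R$ and entry angle in the cusp walk associated to \eqref{z1s} (this is essentially the single-cusp content of Theorem~\ref{thm:1}, with the asymmetry handled through the symmetrized data $(\tilde f_{i,+}+\tilde f_{i,-})/2$ entering $I_{f,i}$). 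Second, point-process convergence: using the (stretched-)exponential decay of correlations for $F$ inherited from uniform hyperbolicity of the induced map, the point process $\sum_j \delta_{(j/n,\,n^{-1/\alpha}\tilde f\circ F^j)}$ converges on $[0,1]\times(\R\setminus\{0\})$ to a Poisson point process with intensity $dt\otimes d\nu$, where $\nu$ is the L\'evy measure of the target stable law. Asymptotic independence of contributions from distinct $i\in\mathcal J_*$, which underpins the decomposition in \eqref{stablelaw}, follows from the mixing of $F$ between cusp visits. Combining these yields $J_1$-convergence of $\tilde W_n(t)=n^{-1/\alpha}\sum_{j=0}^{[nt]-1}\tilde f\circ F^j$ to the target process $(\mathcal Y_t)_{t\in[0,1]}$, after which the Melbourne--Zweim\"uller lifting delivers the claimed $M_1$-convergence of $W_n$.

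For the $J_1$-nonconvergence of $W_n$, I would directly obstruct $J_1$-tightness. With probability bounded away from $0$ uniformly in $n$, a single cusp excursion inside $[0,1]$ (rescaled time) into some $P_i$, $i\in\mathcal J_*$, produces $\mathcal R\asymp n^{1/\alpha}$ and, by the monotonicity enforced by the sign hypothesis, a monotone contribution to the partial sum of total size $\asymp n^{1/\alpha}$. After rescaling this is a monotone ramp of height of order one, spread over a time interval of length $\epsilon_n\asymp n^{1/\alpha-1}\to 0$. For any fixed $\delta>0$ and all $n$ large enough that $\epsilon_n<\delta$, the Billingsley $J_1$-modulus $w'_{W_n}(\delta)$ is bounded below by the ramp's total rise (any admissible partition must place some sub-interval straddling the ramp), so $\limsup_n \P(w'_{W_n}(\delta)>1/2)$ remains bounded away from $0$ as $\delta\downarrow 0$. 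This violates the standard $J_1$-tightness criterion on $\mathbb D([0,1])$ and rules out $J_1$-convergence to any limit (in particular to $\mathcal Y$, since $J_1$-convergence is stronger than $M_1$-convergence, which already identifies the limit). The main obstacle in the whole program is Step~1 in the asymmetric, multi-cusp setting: extracting the precise constants $\sigma_{f,i}^\alpha$ and $\xi_{f,i}$ requires tracking the full joint distribution of entry angle, exit angle, and return time in the cusp walk under the non-symmetric boundary expansions \eqref{z1s}, and arguing asymptotic independence across cusps; this is a careful refinement of the symmetric single-cusp arguments of \cite{JZ17}.
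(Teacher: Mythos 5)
Your overall plan matches the paper's: first prove a $J_1$-functional limit theorem for the induced system $(M,F,\tilde\mu,\tilde f)$ via the point-process machinery of Durrett--Resnick, then lift to $M_1$-convergence of $W_n$ with the Melbourne--Zweim\"uller theorem, using the one-sign hypothesis to verify the weak-monotonicity condition. Your identification of where the sign hypothesis enters (turning an induced jump into an $M_1$-admissible monotone ramp) is exactly the paper's point; the only small imprecision is your phrase ``a bounded number of boundary collisions'' -- in the paper the non-monotone contributions actually include entire excursions into the \emph{non}-maximally-flat cusps $P_i$, $i\in\mathcal J\setminus\mathcal J_*$, which are not of bounded length but are of length $O(n^{1/\tilde\alpha})$ with $\tilde\alpha>\alpha$, hence still negligible after dividing by $n^{1/\alpha}$; the paper makes this explicit by bounding $f^*\le \|f\|_\infty(\mathcal R\,\mathbf 1_{\cup_{i\notin\mathcal J_*}M_i}+1)$.

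Where you genuinely diverge is the $J_1$-nonconvergence claim. You propose to break $J_1$-tightness directly by locating, with probability bounded away from zero, a single long excursion of length $\mathcal R\asymp n^{1/\alpha}$ that (by the sign hypothesis) forms a monotone ramp of height $\asymp 1$ spread over a rescaled-time window of length $\asymp n^{1/\alpha-1}\to 0$, and then observing that Billingsley's modulus $w'_{W_n}(\delta)$ cannot be made small because such a ramp cannot be isolated by any partition of mesh $>\delta$. That argument works (at most one partition point can fall inside the ramp once $\delta>n^{1/\alpha-1}$, so some sub-interval inherits at least half the rise), but it is substantially heavier than the paper's one-line trick. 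The paper instead linearizes: since $f$ is bounded, $\sup_{t\in[0,1]}|w_n(t)-W_n(t)|\le \|f\|_\infty n^{-1/\alpha}\to 0$, where $w_n$ is the continuous linear interpolation of $W_n$. If $W_n\to\mathcal Y$ in $J_1$, then so does $w_n$, and since $C[0,1]$ is $J_1$-closed the limit would be a.s.\ continuous -- impossible because the $\alpha$-stable L\'evy motion has jumps a.s. Your approach has the mild advantage of showing \emph{why} $J_1$ fails (the ramp picture), and it does not even require knowing what the $M_1$-limit is; the paper's approach is shorter, requires only boundedness of $f$, and needs no quantitative excursion or modulus estimates. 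Both are correct and each could replace the other.
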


Note that the values of $f$ at the maximally flat cusps determine the value $\xi$ and $\sigma$ and in fact one can compute that the right side of \eqref{stablelaw} is just a 
random variable $\mathcal Z_{\alpha,\xi_f,C_\alpha^{-\frac 1\alpha}\sigma_f}$ with
\beq\label{sigmaxi}
\sigma_f^\alpha:=\sum_{i\in\mathcal J_*}\frac{2|I_{f,i}|^\alpha }{\beta\bar C_i^{\alpha-1}|\partial Q|}\quad\mbox{and}\quad\xi_f:=\frac{\sum_{i\in \mathcal J_*}\sign(I_{f,i})\bar C_i^{1-\alpha}|I_{f,i}|^\alpha}{\sum_{i\in \mathcal J_*}\bar C_i^{1-\alpha}|I_{f,i}|^\alpha}\, .
\eeq
Let us remark that, as in \cite{JZ17}, Theorem \ref{thm1} (as well as Theorem \ref{thm:induced} below) holds also for $f$ bounded and piecewise $\gamma$-H\"older, for some $\gamma>0$, with discontinuities contained in the singular set of $T$ and such that $f$ is $\gamma$-H\"older in  a neighborhood of the region in $\cM$ corresponding to each of the cusps. 
%
%
\section{Proofs of Theorems \ref{thm:1} and \ref{thm1} using an induced map}\label{sec:top-level proof}

\subsection{Convergence for the $M_1$-metric}
Recall that $$F(x):=T^{\mathcal R(x)}(x),$$ with $\mathcal R(x):=\min\{n\ge 1\, :\, T^n(x)\in M\}$.
For every $g:M\rightarrow\mathbb R$, we denote the Birkhoff sums on the induced space by
$$S_ng(x):=\sum_{k=0}^{n-1}g\circ F^k(x),\quad x\in M,\ n\ge 0\, .$$
Recall also that, given $f:\mathcal M\rightarrow\mathbb R$, we define $\tilde f:M\rightarrow \mathbb R$ by setting
\begin{align}
\tilde f(x):=\mathcal S_{\mathcal R(x)}f(x).
\end{align}
We similarly define the process $\hW_n:=(\hW_n(t))_{{t\in[0,1]}}$ as follows:
\begin{align}
\hW_n(t) :=n^{-1/\alpha} S_{[nt]} \tf\, .
\end{align}
An intermediate functional limit theorem for the induced map is as follows.

\begin{theorem}\label{thm:induced}
Let $f:\cM\to \mathbb{R}$ be as in Theorem 
\ref{thm:1}.
Then $(\hW_n)_n$ converges in distribution, in the
Skorokhod $J_1$-topology, to an $\alpha$-stable L\'evy motion with $\alpha=\frac{\beta}{\beta-1}$, skewness parameter $\xi_f$, and scale parameter $$s=C_\alpha^{-1/\alpha}\tilde\sigma_f:=(C_\alpha\mu(M))^{-1/\alpha}\sigma_f\, ,$$
with $\sigma_f$ and $\xi_f$ defined in \eqref{sigmaxi}
\end{theorem}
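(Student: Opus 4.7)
\textbf{Proof plan for Theorem \ref{thm:induced}.}
The plan is to treat $\{\tilde f\circ F^k\}_{k\ge 0}$, under $\tmu$, as a stationary sequence in the domain of attraction of an $\alpha$-stable law and apply the point-process functional limit theorem of Durrett--Resnick \cite{durrett1978functional}. Three ingredients must be verified: (a) a regularly varying tail for $\tilde f$ with index $\alpha=\beta_*/(\beta_*-1)$ and with one-sided scales matching (\ref{sigmaxi}); (b) a strong-mixing estimate for $(F,\tmu)$ on (piecewise) H\"older observables, giving asymptotic independence of widely separated exceedances; and (c) an anti-clustering condition, which is what yields $J_1$ rather than only $M_1$ convergence, since each single $F$-step packages an entire cusp excursion into one jump.

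For (a), decompose $M=M_0\cup\bigcup_{i\in\mathcal J}M_i$. The function $\tilde f$ is bounded on $M_0$, so only cusp pieces contribute. On $M_i$, parameterize by the number $N$ of consecutive collisions in $B_\eps(P_i)$, so that $\mathcal R\approx N$ there. Using the representation (\ref{z1s}), I would adapt the collision-by-collision analysis of \cite{Z2016b,JZ17} to the case $C_{i,+}\ne C_{i,-}$, tracking $(s,\varphi)$ along the excursion. The outcome should be that, up to lower-order errors, on $\{N\ge k\}\cap M_i$,
\[
\tilde f(x)\;=\;\tfrac{k}{2}\bigl(\tilde f_{i,+}(\varphi_x)+\tilde f_{i,-}(\varphi_x)\bigr)+O(1),
\]
with $\varphi_x$ the (approximately conserved) collision angle, and $\tmu(\{N\ge k\}\cap M_i)$ behaves like $k^{-\alpha}$ times an explicit $\varphi$-marginal proportional to $\sin\varphi$ (after the change of variables dictated by (\ref{z1s})). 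Integrating over $\varphi$ and inverting the map $x\mapsto \tfrac{k}{2}(\tilde f_{i,+}+\tilde f_{i,-})$ produces the factor $\sin^{1/\alpha}\varphi$ and reduces the integral to $I_{f,i}$. After summing over $i\in\mathcal J_*$ (cusps with $\beta_i<\beta_*$ contribute only $o(x^{-\alpha})$), the one-sided tails match $\sigma_{f,i}^{\alpha}(1\pm\xi_{f,i})/2$.

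For (b), the induced map $F$ is exponentially mixing on piecewise-H\"older observables by the Young-tower construction of \cite{chernov2005billiards,Z2016b}, which is more than enough for the mixing hypothesis in \cite{durrett1978functional}. For (c), after a long excursion ending at time $k$, the image $F^{k+1}x\in M$ has only $O(m^{-\alpha})$ probability of immediately triggering another excursion of length $\ge m$, which rules out clustering of consecutive large values. Combining (a)--(c), the time-space point process $\sum_k\delta_{(k/n,\,\tilde f(F^k\cdot)/n^{1/\alpha})}$ converges to a Poisson random measure with intensity $\mathrm{Leb}\otimes\nu_\alpha$, where $\nu_\alpha$ is the L\'evy measure of the target stable law. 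The continuous-mapping/summation step of \cite{durrett1978functional} then upgrades this to $J_1$ convergence of $\hW_n$ to the $\alpha$-stable L\'evy motion with the stated parameters $\xi_f$ and $C_\alpha^{-1/\alpha}\tilde\sigma_f$.

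The principal obstacle is the tail analysis (a) in the asymmetric setting. Consecutive collisions in cusp $i$ alternate between $\Gamma_i$ and $\Gamma_{i+1}$, so per-collision contributions involve $\tilde f_{i,+}$ and $\tilde f_{i,-}$ separately, and the geometry is governed by distinct constants $C_{i,+}\ne C_{i,-}$. The crucial technical point is that in any long excursion the two branches receive the same number of collisions up to a bounded correction, so the asymmetry averages out at leading order and only the symmetric combinations $\tilde f_{i,+}+\tilde f_{i,-}$ and $\bar C_i=(C_{i,+}+C_{i,-})/2$ survive in the tail constants. Making this averaging quantitative, and verifying that the entry/exit remainders of the excursion do not perturb either $\sigma_{f,i}$ or $\xi_{f,i}$, is the technical heart of the argument and is what I would defer to the appendix.
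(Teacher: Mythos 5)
Your outline is aligned with the paper's strategy at the top level: reduce to the induced system, exhibit regularly varying tails for $\tilde f$ with the exponent $\alpha=\beta_*/(\beta_*-1)$ and one-sided constants coming from the maximally flat cusps, use the exponential mixing of the return map $F$ to get Poisson point-process convergence, and then invoke the Durrett--Resnick machinery. The paper does exactly this (Proposition~\ref{prop:tk}, Lemma~\ref{Mm2}, Lemma~\ref{boundedcov}). Two points, however, need attention.

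First, a genuine gap: you do not address what the paper calls Condition~II (``vanishing small values''). Passing from convergence of the truncated point process to $J_1$ convergence of $\hW_n$ is \emph{not} a continuous-mapping step in the dependent setting. The summation functional is discontinuous in $J_1$, and one must show that the centered small-jump contributions
\[
\max_{0\le k\le n}\Bigl|\sum_{j<k}\bigl(n^{-1/\alpha}Z_j\,\bI_{|Z_j|<\eps n^{1/\alpha}}-\tmu(\cdot)\bigr)\Bigr|
\]
are uniformly negligible as $\eps\to 0$. In the i.i.d.\ stable case this is automatic, but here one needs a quantitative bound on covariances of truncated variables; the paper does this through a fourth-moment estimate in the spirit of Billingsley (Lemmas~\ref{lem:VarUn} and \ref{cond2lemma}), which is a substantial piece of the proof. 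Your plan ends at point-process convergence plus ``the summation step,'' so this piece is missing. Relatedly, the paper first replaces $\tilde f$ by the simpler function $\tilde R$, a linear combination of the return-time indicators on the cusp pieces (Proposition~\ref{PROPsimpl}); this isolates the tail from the excursion-dependent remainder and is what makes the subsequent Condition~I/II estimates tractable. Your plan attacks $\tilde f$ directly, which is possible in principle but transfers all of that error control elsewhere; it should be stated where.

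Second, a smaller imprecision in (a): the formula $\tilde f(x)=\tfrac{k}{2}\bigl(\tilde f_{i,+}(\varphi_x)+\tilde f_{i,-}(\varphi_x)\bigr)+O(1)$ with ``$\varphi_x$ the (approximately conserved) collision angle'' is not right as stated. Along a long excursion the angle $\varphi_n$ is very far from constant: it sweeps from near $0$ (or $\pi$) up to near $\pi/2$ at the turning point and back down. What is (almost) conserved is the adiabatic invariant $H(r,\varphi)=|r|^\beta\sin\varphi$ (Proposition~\ref{prop3}(6)). The correct statement is that $\tilde f$ accumulates, up to bounded error, the sum of $\tilde f_{i,\pm}(\varphi_n)$ over the visited angles, and the limiting angle distribution together with the invariant measure $\sin\varphi\,dr\,d\varphi$ produces the weight $\sin^{1/\alpha}\varphi$, i.e., $\tilde f\approx\tfrac{I_{f,i}}{I_1}\cR$. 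Your plan eventually reaches $I_{f,i}$, so the conclusion is correct, but the ``conserved angle'' heuristic should be replaced by the adiabatic-invariant picture that the proof actually uses. Your identification of the averaging of the two branches (same number of collisions on each, so only $\tilde f_{i,+}+\tilde f_{i,-}$ and $\bar C_i$ survive) as the technical heart of the asymmetric case is correct and matches the appendix.
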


Both Theorems \ref{thm:1} and \ref{thm1} are consequences of Theorem \ref{thm:induced}.
\begin{proof}[Proof of Theorem \ref{thm:1}]
By considering the first hitting time of the $M$ (in lieu of the first return time) we may extend the definitions of $\mathcal R$ and $F$ to all of $\mathcal M$, so that in particular $\tilde f\circ F$ is defined on all of $\cal M$.
Due to Theorem \ref{thm:induced}, $$\left((n^{-\frac 1\alpha}S_{\lfloor nt\rfloor}\tilde f\circ F)_{t\ge 0}\right)_n$$ converges in distribution, with respect to $\mu$ and to the metric $J_1$, to a L\'evy process $(\mathcal Y_t)_{t\ge 0}$. Moreover, for every positive integer $m$,
we define $\tau_m(x)$ as the number of visits of $(T^k(x))_{k=1,...,m}$ to $M$ so that
$$ \sum_{k=0}^{\tau_m(x)-1}\mathcal R\circ F^k(x)\le m<\sum_{k=0}^{\tau_{m}(x)}\mathcal R\circ F^k(x)\, .$$
Due to the ergodic theorem $(\tau_m/m)_{m\ge 1}$ converges almost surely to $\mu(M)$ and so
$\left((n^{-\frac 1\alpha}S_{\tau_{\lfloor nt\rfloor}-1}\tilde f\circ F)_{t\ge 0}\right)_n$ converges in distribution, with respect to $\mu$ and to $J_1$, to $(\mathcal Y_{t\mu(M)})_{t\ge 0}$.
Moreover,
$$\left|\mathcal S_{m}f(x)-S_{\tau_{m}(x)-1}\tilde f(F(x))\right|\le \Vert f\Vert_\infty\left(\mathcal R(x)+\mathcal R_-(T^m(x))\right) \, ,$$
where $\mathcal R_-$ corresponds to the length of time since the last visit to $M$.
Therefore by reversibility of the dynamics, for every $\epsilon>0$,
$$\mu\left(n^{-\frac 1\alpha}\left|\mathcal S_{\lfloor nt\rfloor}f-S_{\tau_{\lfloor nt\rfloor}(x)-1}\tilde f\circ F\right|>\epsilon\right)\le
2\,  \mu\left(n^{-\frac 1\alpha}\Vert f\Vert_\infty\mathcal R>\varepsilon/2\right)\, . $$
This implies the convergence in probability of $(n^{-\frac 1\alpha}(\mathcal S_{\lfloor nt\rfloor}f(x)-S_{\tau_{\lfloor nt\rfloor}(x)-1}\tilde f(F(x))))_n$ to 0 for every $t\ge 0$ and so the convergence of the finite-dimensional distributions (with respect to $\mu$) of $\left((n^{-\frac 1\alpha}\mathcal S_{\lfloor nt\rfloor}f)_{t\ge 0}\right)_n$ to the ones of $(\mathcal Y_{t\mu(M)})_{t\ge 0}$. 
\end{proof}

The result of convergence in distribution of
Theorem \ref{thm1} will follow directly from Theorem \ref{thm:induced} and Proposition \ref{lem:MZ} below. The latter is a version of \cite[Thm 2.2]{melbourne2015weak}, the statement of which, requires the following notion:
$$
f^*(x):=\(\max_{0\le\ell'\le\ell\le \cR(x)}(\cS_{\ell'}f(x)-\cS_{\ell}f(x))\)\wedge\(\max_{0\le\ell'\le\ell\le \cR(x)}(\cS_{\ell}f(x)-\cS_{\ell'}f(x))\).
$$

\begin{proposition}
[Lifting a weak invariance principle, Melbourne and Zweim\"uller]\label{lem:MZ}
Under the above assumptions on $f$ and $\hW_n$, if $(\hW_n(t), {t\in[0,1]})$ converges weakly in the
Skorokhod $M_1$-topology, as $n\to\infty$, to an $
\alpha$-stable L\'evy motion $(W(t), {t\in[0,1]})$ and
\beq\label{max_condition}
n^{-1/\alpha}\(\max_{0\le k\le n} f^*\circ F^k\)\xrightarrow{d} 0.
\eeq
then $(W_n(s), s\ge 0)\xrightarrow{d}  (W(s\mu(M)), s\ge 0)$ in the
Skorokhod $M_1$-topology.
\end{proposition}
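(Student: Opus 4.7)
\textbf{Proof proposal for Proposition \ref{lem:MZ}.} The strategy is to express $W_n$ as a random time change of $\hW_n$, show that the time change converges almost surely to the deterministic linear map $s\mapsto s\mu(M)$, and then use \eqref{max_condition} to argue that the within-cycle discrepancy between the two processes is $M_1$-negligible. The $M_1$-topology (rather than $J_1$) is essential here because inside a single return-time cycle, the Birkhoff sums $\mathcal S_\ell f$, $0\le\ell\le\mathcal R$, generally climb to the cycle total $\tilde f$ through many small intermediate steps; $M_1$ allows such a staircase path to converge to a pure jump, provided the intermediate values do not overshoot or backtrack by more than a negligible amount.

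Following the notation used in the proof of Theorem \ref{thm:1}, set $R_k(x):=\sum_{j=0}^{k-1}\mathcal R\circ F^j(x)$ so that $F^k(x)=T^{R_k(x)}(x)$, and let $\tau_m(x)$ count the number of visits of $T(x),\dots,T^m(x)$ to $M$. The telescoping identity $S_\tau\tilde f=\mathcal S_{R_\tau}f$ yields
$$\mathcal S_{[ns]}f(x)=S_{\tau_{[ns]}(x)}\tilde f(x)+\sum_{j=R_{\tau_{[ns]}(x)}}^{[ns]-1}f(T^jx),$$
i.e.\ $W_n(s)=\hW_n(\lambda_n(s))+E_n(s)$ with $\lambda_n(s):=\tau_{[ns]}/n$ and $E_n(s)$ a partial Birkhoff sum of $f$ confined to the current excursion away from $M$. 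By Birkhoff's ergodic theorem applied to $\mathcal R$ under $F$, $\lambda_n$ converges a.s., uniformly on compacts of $[0,\infty)$, to the strictly increasing continuous map $s\mapsto s\mu(M)$. Since composition with such a uniformly convergent, strictly increasing continuous time change is continuous in the Skorokhod $M_1$-topology, a Skorokhod representation argument combined with the hypothesis $\hW_n\xrightarrow{d}W$ in $M_1$ yields $\hW_n\circ\lambda_n\xrightarrow{d}W(\mu(M)\cdot)$ in $M_1$.

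It remains to show that $E_n$ is $M_1$-negligible. For $s\in[R_k(x)/n,R_{k+1}(x)/n)$, the path $\hW_n\circ\lambda_n$ is constant at $n^{-1/\alpha}S_k\tilde f(x)$, while $W_n(s)$ traces out the values $n^{-1/\alpha}(S_k\tilde f(x)+\mathcal S_\ell f(F^k x))$ as $\ell$ runs through $\{0,1,\dots,\mathcal R(F^k x)\}$ and rejoins $\hW_n\circ\lambda_n$ at $s=R_{k+1}(x)/n$. Thus the two paths agree at the endpoints of every excursion interval, and the question is how far $W_n$ deviates from a monotone interpolation between those endpoints. The definition of $f^*(F^kx)$ measures precisely this deficit of monotonicity: if $\{\mathcal S_\ell f(F^kx)\}$ is already monotone in $\ell$, then one of the two maxima vanishes and $f^*(F^kx)=0$; otherwise $f^*(F^kx)$ is the smaller of the largest backward and the largest forward excursion of the within-cycle partial sums, and this is exactly the quantity that bounds the parametric-representation version of the $M_1$-distance between the two paths on the interval in question. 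Consequently
$$d_{M_1}(W_n,\hW_n\circ\lambda_n)\le n^{-1/\alpha}\max_{0\le k\le n}f^*\circ F^k(x)+o(1),$$
and the right-hand side tends to $0$ in probability by \eqref{max_condition}. A Slutsky-type combination in the $M_1$-topology then gives $W_n\xrightarrow{d}W(\mu(M)\cdot)$.

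The main obstacle I anticipate is justifying the geometric bound $d_{M_1}(\cdot,\cdot)\le n^{-1/\alpha}\max_k f^*\circ F^k$: one must verify that the $M_1$-distance between two cadlag paths that agree at a common sequence of instants is controlled by the non-monotone oscillation within each interval, rather than by the full amplitude of the Birkhoff sums. This requires working directly with the parametric-representation definition of $M_1$ and constructing, within each excursion, an explicit weakly monotone parameterization of the completed graph of $W_n$ that follows the monotone envelope connecting the two endpoint values and then detours through the extremal excursion point of size $f^*(F^kx)$. Once this is in place, the remaining ingredients---uniform ergodic convergence of $\lambda_n$, continuity of time change in $M_1$, and a Skorokhod-Slutsky patching step---assemble the conclusion in a routine manner.
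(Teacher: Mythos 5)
The paper does not prove Proposition~\ref{lem:MZ} at all: it is stated as a cited result, attributed to \cite[Thm 2.2 and Prop.\ 2.8]{melbourne2015weak}, and the paper proceeds directly to apply it. So there is no ``paper's proof'' to compare against. What you have written is instead a reconstruction of the Melbourne--Zweim\"uller argument, and in broad outline it is on target: the decomposition $W_n(s)=\hW_n(\lambda_n(s))+E_n(s)$ with $\lambda_n(s)=\tau_{[ns]}/n$, the ergodic-theorem identification $\lambda_n\to\mu(M)\,\mathrm{id}$ uniformly on compacts, and the observation that $f^*$ is precisely the quantity controlling the $M_1$-discrepancy within a single excursion, are the right ingredients and match the strategy of the cited work.

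Two places where you should be more careful, since you yourself flag the $M_1$-geometry as the main obstacle. First, $\lambda_n$ is a nondecreasing \emph{step} function, not a strictly increasing continuous time change, so ``composition is continuous in $M_1$'' does not apply verbatim; the standard fix is to absorb $\lambda_n$ into the Skorokhod time reparametrization (write $\lambda_n=(\mu(M)\mathrm{id})\circ\rho_n$ with $\rho_n\to\mathrm{id}$ uniformly, a legitimate nondecreasing homeomorphism after a harmless linear interpolation) and use that $\phi_n\circ\rho_n$ and $\phi_n$ are $J_1$-close, hence $M_1$-close. Second, the bound $d_{M_1}\le n^{-1/\alpha}\max_k f^*\circ F^k + (\text{time mesh})$ is not obvious from $\max(\text{overshoot},\text{undershoot})$ alone---one can cook up excursions whose $M_1$-distance to the pure jump exceeds $\max(\text{overshoot},\text{undershoot})$. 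The correct argument uses a running-extremum parametrization: if $D_-$ is the maximum decrease within the excursion, set $u_\psi(r):=\min\bigl(A,\sup_{r'\le r}u_\phi(r')\bigr)$ (clipped running maximum); this is a valid monotone parametrization of the completed graph of the jump, and one checks $\|u_\psi-u_\phi\|_\infty\le D_-$ using that the overshoot is also $\le D_-$. Doing the symmetric construction with the running minimum gives the bound by the maximum increase, and taking the minimum of the two yields $f^*$. You also still need $\max_{k\le n}\mathcal R\circ F^k/n\to 0$ in probability for the time mesh, which follows from the $\alpha$-tail of $\mathcal R$ (Lemma~\ref{Mm2}) and $\alpha>1$. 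With those two points filled in, your sketch becomes a correct proof of the proposition the paper takes as a black box.
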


Note that the version of this result in \cite{melbourne2015weak} assumes strong distributional convergence of the induced process, but Proposition 2.8 there allows us to state it as we did above.
\begin{proof}[Proof of Theorem \ref{thm1}]
We need only check the so-called {\it weak monotonicity} condition \eqref{max_condition} in our setting. Since $f$ takes a single sign in the neighborhood around each cusp 
$P_i$ with $i\in\mathcal J_*$,
we have nearly full monotonicity of the ergodic sums during an excursion, with the only exception possibly coming from the first step. Thus
	\begin{align*}
	f^*&\le\Vert f\Vert_\infty\mathcal R\bI_{\cup_{i\in\mathcal J\setminus \mathcal J_*}M_i}+\Vert f\Vert_\infty\bI_{
\cup_{i\in\mathcal J_*}M_i}\\
	&\le \Vert f\Vert_\infty(\mathcal R\bI_{\cup_{i\in\mathcal J\setminus \mathcal J_*}M_i}+1).
	\end{align*}
Now let $\tilde\alpha:=\min\{\frac{\beta_i}{\beta_i-1}:i\in\mathcal J\setminus \mathcal J_*\}$, which by assumption satisfies $\tilde\alpha>\alpha$. 
Due to Section \ref{sec:proof}, 
$$\left(Y_n(t):=n^{-1/\alpha} S_{[nt]} \(\mathcal R\bI_{\cup_{i\in\mathcal J\setminus \mathcal J_*}M_i}-\tmu(\mathcal R\bI_{\cup_{i\in\mathcal J\setminus \mathcal J_*}M_i})\)\right)_t$$
converges	in the $J_1$-metric, as $n$ goes to infinity, to the 0 function (here $\tilde\mu:=\mu(\cdot|M)$).
	Therefore
	\begin{eqnarray*}
		n^{-\frac 1\alpha}\(\max_{0\le k\le n} f^*\circ F^k\)&\le&
		\Vert f\Vert_\infty n^{-\frac 1\alpha}\max_{0\le k\le n}\( \left( \mathcal R\bI_{\cup_{i\in\mathcal J\setminus \mathcal J_*}M_i}-\tmu(\mathcal R\bI_{\cup_{i\in\mathcal J\setminus \mathcal J_*}M_i})\right)\circ F^k+1+\tilde\mu(\mathcal R)\)\\
		&\le&\Vert f\Vert_\infty \left(\left|\max_{t\in[0,1]}Y_n(t)-\min_{t\in[0,1]}Y_n(t)\right|+n^{-\frac 1\alpha}(1+\tilde\mu(\mathcal R))\right)\, ,
	\end{eqnarray*}
	which converges in distribution to 0.\\
 This checks that \eqref{max_condition} holds which completes the proof of Theorem \ref{thm1} up to a proof of Theorem \ref{thm:induced} which is given in
Section \ref{sec:proof}.
\end{proof}
\subsection{Non-convergence for the $J_1$-metric}\label{sec:j1}
In this subsection, we explain why the convergence in Theorem \ref{thm1} does not hold in the $J_1$-topology (see \cite[Example 2.1]{tyran2010weak} or \cite{avram1992weak} for other redactions of this argument).

Let $(w_n(t))_t$ be the continuous process obtained from
$(W_n(t))_t$ by linearization:
$$
w_n(t):=W_n(t)+\frac{(nt-\lfloor nt\rfloor)f\circ T^{\lfloor nt\rfloor}}{n^{1/\alpha}}\, ,\quad {t\in[0,1]}.
$$
Since $f$ is uniformly bounded, we also have
$$\sup_{{t\in[0,1]}} \left| w_n(t)-W_n(t)\right|\le \frac{\Vert f\Vert_\infty}{n^{1/\alpha}}\longrightarrow 0,\quad as\quad n\rightarrow \infty\, .$$
Therefore, the convergence in distribution of $(W_n(t))_n$
in the $J_1$-metric would imply the same convergence in distribution for $(w_n(t))_n$ in the $J_1$-metric and so for the uniform metric on every compact interval, this would contradict the fact that the limiting process is discontinuous.

Note that the reason this argument does not apply to the induced process in Theorem \ref{thm:induced} (or to $J_1$-convergence for general L\'evy processes), is because in that setting $\|\tilde f\|_{\infty}=\infty$.
\section{Proof of Theorem \ref{thm:induced} (functional limit theorem for the
induced system)}\label{sec:proof}

Assume the assumptions of Theorem \ref{thm:1}.
Recall that $\tilde\mu=\mu(\cdot|M)$.
We define $$\tf_i:=\tf \cdot \mathbf 1_{M_i}-\tilde\mu(\tf\cdot\mathbf 1_{M_i}).$$
Note that, due to the Kac theorem,
 	\begin{equation}\label{eq:finite mean}
 	\tmu(\cR)<\infty.
 	\end{equation}

\subsection{Simplification}
In order to prove Theorem \ref{thm:induced}, we show that we can replace $\tilde f$ by the simplest function $\tilde R$ given by:
\begin{align}
\tilde R(x):={\sum_{i\in\mathcal J} \frac{I_{f,i,\alpha_i}}{I_{1,\alpha_i}}(\mathcal R_i(x)-{\tilde\mu(\mathcal R_i)})\, ,\quad\mbox{with}\quad \mathcal R_i:=\mathcal R\mathbf 1_{M_i}}
\end{align}
with
$$I_{1,\alpha_i}:=I_{1,i,\alpha_i}=\int_{0}^{\pi/2}\sin^{\frac{1}{\alpha_i}}\varphi\,d\varphi\quad\mbox{and}\quad \alpha_i=\frac{\beta_i^*}{\beta_i^*-1}\, .$$
We simply write $I_1$ for $I_{1,\alpha}$.
The goal of this section is to prove the following result.
\begin{proposition}\label{PROPsimpl}
We have
$$
\sup_{k\le n} \frac{S_k(\tilde f-\tilde R)
}{n^{1/\alpha}}
\rightarrow 0\, ,$$
in probability, as $n$ goes to infinity.
\end{proposition}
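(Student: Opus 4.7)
The plan is to decompose $\tilde f-\tilde R$ according to the partition $M=M_0\cup\bigcup_{i\in\mathcal J}M_i$ and to control the supremum of Birkhoff sums of each piece separately. Since $\mu(f)=0$ and Kac's formula give $\tilde\mu(\tilde f)=0$, one can write
\[
\tilde f-\tilde R = \bigl(\tilde f\bI_{M_0}-\tilde\mu(\tilde f\bI_{M_0})\bigr) + \sum_{i\in\mathcal J}\psi_i,\qquad \psi_i:=\tilde f_i-\frac{I_{f,i,\alpha_i}}{I_{1,\alpha_i}}\bigl(\mathcal R_i-\tilde\mu(\mathcal R_i)\bigr),
\]
each summand being centered under $\tilde\mu$. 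As $\sup_{k\le n}$ is subadditive, it suffices to show that $n^{-1/\alpha}\sup_{k\le n}|S_kh|\to 0$ in $\tilde\mu$-probability for each individual summand $h$.

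On $M_0$, $\mathcal R$ is uniformly bounded by the very definition of $M$, so $\tilde f\bI_{M_0}$ is bounded, and a maximal inequality for Birkhoff sums of the uniformly hyperbolic induced map $F$ (via the spectral gap of its transfer operator on a suitable anisotropic space) yields $\sup_{k\le n}|S_k(\tilde f\bI_{M_0}-\tilde\mu(\tilde f\bI_{M_0}))|=O(\sqrt{n\log n})=o(n^{1/\alpha})$, using $\alpha<2$. For $i\in\mathcal J\setminus\mathcal J_*$ one has $\alpha_i>\alpha$; both $\tilde f\bI_{M_i}$ and $\mathcal R_i$ have tails of order $\alpha_i$ under $\tilde\mu$ (by the return-time asymptotics of \cite{Z2016b} and the boundedness of $f$), hence $\psi_i\in L^p(\tilde\mu)$ for every $p<\alpha_i$, and in particular for some $p>\alpha$. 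A Marcinkiewicz--Zygmund type estimate combined with the exponential mixing of $F$ then gives $\sup_{k\le n}|S_k\psi_i|=o(n^{1/\alpha})$ in probability.

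The crux is $i\in\mathcal J_*$, where $\tilde f_i$ and $\mathcal R_i$ share the critical tail exponent $\alpha$ and the cancellation in $\psi_i$ must be extracted pointwise. The goal is the quantitative estimate
\[
\Bigl|\tilde f(x)-\tfrac{I_{f,i,\alpha}}{I_{1,\alpha}}\mathcal R(x)\Bigr|\le C\,\mathcal R(x)^{1-\delta}\qquad(x\in M_i)
\]
for some $\delta>0$. During an excursion into the cusp at $P_i$, the sequence of collision angles $(\varphi_k)_{k\le\mathcal R(x)}$ follows, to leading order, a deterministic profile with $\varphi_k\asymp k^{-1/(\beta_*-1)}$, alternating between the two walls; the sum $\sum_kf(r_k,\varphi_k)$ then becomes a Riemann sum for the average of $\tilde f_{i,\pm}$ against the cusp-induced density $\propto\sin^{1/\alpha}\varphi$. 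The symmetric combination $\tfrac14(\tilde f_{i,-}+\tilde f_{i,+})$ appearing in $I_{f,i}$ is exactly what emerges from averaging over the two (possibly asymmetric) walls, and $I_{1,\alpha}$ is the matching normalization obtained by taking $f\equiv 1$. The error $O(\mathcal R^{1-\delta})$ is produced by H\"older regularity of $f$, Riemann-sum discretization, and bounded drift from the deterministic profile. Once this pointwise bound is established, $\psi_i$ has tail $O(t^{-\alpha/(1-\delta)})$ under $\tilde\mu$, so $\psi_i\in L^p(\tilde\mu)$ for some $p>\alpha$, and the argument of the previous paragraph concludes.

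The main obstacle is precisely this pointwise approximation with a uniform exponent $\delta>0$: one must track geometric distortion along arbitrarily long excursions and combine the contributions of the two (possibly non-isometric) walls of each maximally flat cusp. The symmetric single-cusp version was the technical heart of \cite{Z2016b,JZ17}; the novelty here is to adapt that analysis to the asymmetric multi-cusp geometry, which is presumably the role of the appendix referenced in the introduction. All other steps in the plan are routine consequences of standard maximal and moment bounds for the uniformly hyperbolic induced system $(F,\tilde\mu)$.
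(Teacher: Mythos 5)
Your overall plan is recognizable as the same program the paper pursues: decompose $\tilde f-\tilde R$ cusp by cusp, bound the bounded piece trivially, show the non-maximal cusps have subcritical tail exponent, and for the maximal cusps extract the cancellation between $\tilde f$ and $\frac{I_{f,i}}{I_1}\mathcal R$ via a pointwise Riemann-sum estimate. You also correctly identify the pointwise approximation as the real technical crux. Where you diverge from the paper is in \emph{how} those moment/maximal bounds are actually obtained, and this is where your argument has a genuine gap.

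The paper does \emph{not} rely on an unconditional pointwise bound of the form $|\tilde f - \tfrac{I_{f,i}}{I_1}\mathcal R|\le C\mathcal R^{1-\delta}$ on all of $M_i$; Lemma~\ref{lem2} establishes such an estimate only on the intermediate region $M_i^I=\{\delta n^{1/\alpha}\le\mathcal R<\delta^{-1}n^{1/\alpha}\}$, with a constant $C=C(\delta)$. The whole proof of Proposition~\ref{PROPsimpl} is then built around the low/intermediate/high truncation of the return time: the intermediate part is controlled by the variance bound of Lemma~\ref{lem2} plus Billingsley's maximal inequality, the low part by the covariance-decay estimate~\eqref{decay cov} (with the $\delta^\kappa$ gain), and the high part by~\eqref{bound on H}; $\delta$ is sent to $0$ at the end. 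By contrast, you assert the pointwise bound uniformly on $M_i$ and then invoke ``a Marcinkiewicz--Zygmund type estimate combined with the exponential mixing of $F$'' to conclude $\sup_{k\le n}|S_k\psi_i|=o_P(n^{1/\alpha})$. This is the gap. There is no off-the-shelf MZ maximal estimate for stationary mixing sequences with $p<2$ that you can cite without supplying the quantitative decorrelation input. In the i.i.d.\ case the MZ strong law does give $\sup_{k\le n}|S_k|=o(n^{1/p})$ for centered $L^p$ variables, but for the billiard return map one must convert covariance/higher-correlation decay into a usable maximal inequality, and for $p\in(\alpha,2)$ (the relevant regime when the H\"older exponent $\gamma$ is small) the observable is not square-integrable, so a second-moment maximal inequality does not apply directly and a truncation is unavoidable under the hood. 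That is exactly the content of Lemmas~\ref{LHzero} and~\ref{lem2} and the Billingsley--Serfling argument in the paper; your proposal replaces this with a citation that does not exist in the required generality.

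Two smaller remarks. First, you separate off the $M_0$ piece and handle it by a spectral-gap maximal inequality; the paper does not address $M_0$ explicitly (it folds trivially into the low region since $\mathcal R\equiv 1$ there), so your treatment is consistent with the paper though stated more elaborately than needed. Second, your claim that $\psi_i\in L^p(\tilde\mu)$ for some $p>\alpha$ when $i\in\mathcal J_*$ rests entirely on the uniform pointwise bound; if you can only prove it on $M_i^I$ with a $\delta$-dependent constant (as the paper does), the $L^p$ claim does not follow and you are forced back into the truncation scheme. So even the apparent simplification in your plan collapses back onto the paper's structure once you try to justify the steps.
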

This proposition will come from the following lemmas which require some notations.
Fix a small $\delta>0$, and split $M_i$ according to the low, intermediate, and high regions of the index $m$
for the sets \mbox{$M_{i,m}:=\{x\in M_i\,:\, \cR(x)=m\}$:}
\beq
M_i^L:=\cup_{m<\delta n^{\frac{1}{\alpha}}} M_{i,m},\quad\quad M_i^I:=\cup_{\delta n^{\frac{1}{\alpha}}\le m< \frac{1}{\delta}n^{\frac{1}{\alpha}}} M_{i,m} \quad\text{and}\quad M_i^H
:=\cup_{m\geq \frac{1}{\delta}n^{\frac{1}{\alpha}}} M_{i,m}
\eeq
which all depend implicitly on $n$ and $\delta$. We also denote
$$\tf_{i,\zeta}:=\tf\cdot\mathbf 1_{M_i^{\zeta}}-\tilde\mu(\tf\cdot\mathbf 1_{M_i^{\zeta}}),$$ with $\zeta\in\{L,{I},H\}$.

Similarly to \cite[Lemma 4.3]{JZ17}, $\tf_i$ is essentially determined by $\tf_{i,I}$. Indeed, by considering the  induced system restricted to each $M_i$, {the proof of} Lemma 4.3 in \cite{JZ17} gives us the following result:
\begin{lemma}[Vanishing of truncated portions]\label{LHzero}
	For every {$i\in\mathcal J$}, we have, in probability,
	\begin{align}\label{bound on H}
	\lim_{\delta\to 0}\sup_{k\le n}\frac{S_k\mathbf 1_{M_i^H}}{n^{\frac{1}{\alpha_i}}}=0\, .
	\end{align}
Moreover, there exist $C'_0>0$, $\theta\in(0,1)$ and $\kappa>0$ such that, for every {$i\in\mathcal J_*$} and every $k\ge 1$, we have
\beq\label{decay cov}\cov\left(\tf_{i,L},\tf_{i,L}\circ F^k\right)\le C'_0\delta^{\kappa}\theta^kn^{\frac 2{\alpha_i}-1-\kappa}\, . \eeq
\end{lemma}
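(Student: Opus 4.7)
The plan is to mimic the proof of \cite[Lemma 4.3]{JZ17} per cusp, applying it to the induced dynamics restricted to each $M_i$. Since $F$ is uniformly hyperbolic on $M$ once all cusps have been removed, the multi-cusp asymmetric features of $Q$ enter only through the constants $C_{i,\pm}$ and exponents $\beta_i$; the shape of the argument is unchanged.

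For \eqref{bound on H}, the starting point is the tail estimate $\tilde\mu(\cR>m,\,M_i)=\cO(m^{-\alpha_i})$ as $m\to\infty$, derived from the cusp geometry \eqref{z1s} in \cite{Z2016b}. This gives $\tilde\mu(M_i^H)\le\tilde\mu(\cR\ge n^{1/\alpha}/\delta,\,M_i)\lesssim \delta^{\alpha_i}n^{-\alpha_i/\alpha}$. Because $\mathbf 1_{M_i^H}\ge 0$, the supremum over $k\le n$ is just $S_n\mathbf 1_{M_i^H}$, and Markov's inequality together with $F$-invariance of $\tilde\mu$ yield, for any $\eta>0$,
$$\tilde\mu\Bigl(\sup_{k\le n}S_k\mathbf 1_{M_i^H}>\eta\, n^{1/\alpha_i}\Bigr)\le\frac{n\,\tilde\mu(M_i^H)}{\eta\, n^{1/\alpha_i}}\lesssim \eta^{-1}\,\delta^{\alpha_i}\,n^{\,1-\alpha_i/\alpha-1/\alpha_i}.$$
A short check shows the exponent of $n$ is at most $-1/\alpha<0$ for every $i\in\mathcal J$, with equality for $i\in\mathcal J_*$, so the right-hand side vanishes as $n\to\infty$, and \emph{a fortiori} as $\delta\to 0$.

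For \eqref{decay cov}, the key ingredient is an exponential decay-of-correlations estimate for $F$ against piecewise H\"older observables on $M$, available from the uniform hyperbolicity of the induced map (see \cite{chernov2005billiards,Z2016b}). The tail estimate above and $|\tf|\le\|f\|_\infty\cR$ already give
$$\tilde\mu\bigl(\tf_{i,L}^2\bigr)\lesssim \sum_{m<\delta n^{1/\alpha}} m^2\cdot m^{-1-\alpha_i}\lesssim \delta^{2-\alpha_i}\, n^{2/\alpha_i-1}\qquad(i\in\mathcal J_*),$$
which matches the exponent of $n$ in \eqref{decay cov} up to the $\kappa$-loss. Following \cite{JZ17}, one then decomposes $\tf_{i,L}$ as a sum of centred, piecewise H\"older pieces supported on the level sets $M_{i,m}$, approximates each indicator $\mathbf 1_{M_{i,m}}$ by a H\"older function differing from it only on a thin neighbourhood of $\partial M_{i,m}$, applies the exponential decay bound pairwise, and recombines the $m$-indexed pieces using the tail asymptotic. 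The smoothing error is controlled by the small measure of the boundary strips, while the principal term inherits the $L^2$-scale just computed, producing the claimed $C'_0\,\delta^\kappa\,\theta^k\,n^{2/\alpha_i-1-\kappa}$.

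The main obstacle is verifying that the smoothing in the last step gives a \emph{net} polynomial gain $\delta^\kappa n^{-\kappa}$ rather than a loss: one must simultaneously pay for approximating the indicators and absorb that payment into a small power of $\delta$ and $n$. This is the technical heart of \cite[Lemma 4.3]{JZ17} and depends on quantitative control of unstable-manifold expansion near $\partial M_{i,m}$. Since that control is purely local near each cusp $P_i$, the single-symmetric-cusp argument transfers verbatim to each $i\in\mathcal J_*$ here, and the uniform $\kappa>0$ and $\theta\in(0,1)$ in the statement are obtained as the minimum (resp.\ maximum) over the finite set $\mathcal J_*$.
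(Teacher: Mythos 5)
Your proposal is correct and takes essentially the same approach as the paper, which itself simply invokes \cite[Lemma 4.3]{JZ17} applied to the induced system restricted to each $M_i$. Your explicit Markov-inequality computation for \eqref{bound on H} (using the tail estimate of Lemma~\ref{Mm2} and the $F$-invariance of $\tilde\mu$, with the exponent check $1-\alpha_i/\alpha-1/\alpha_i\le -1/\alpha<0$) and your sketch of the decorrelation argument for \eqref{decay cov} fill in exactly the per-cusp reduction the paper leaves implicit.
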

For $\zeta\in\{L,I,H\}$, set $\cR_{i,\zeta}:=\cR|_{M_i^\zeta}$ and define $E_{i,\zeta}$ as:
\beq\label{defnIfI}
E_{i,\zeta}:=\tilde{f}_{i,\zeta} -\frac{I_{f,i}}{I_1}\(\cR_{i,\zeta}(x)-\tmu(\cR_{i,\zeta}
)\)\, .
\eeq
Again we note that the truncation to the intermediate region, denoted by $I$ in the subscripts/superscript, implicitly depends on $n$. Similar to \cite{JZ17}, Lemma 4.4 and Proposition 4.2 in \cite{JZ17} implies that  $f$ is $\gamma$-H\"older as the following lemma indicates. 


\begin{lemma}\label{lem2}
There exist $C=C(\delta)>0$ and $\vartheta\in(0,1)$ such that
\beq\nn
|E_{i,I}|\leq C \|f\|_{\ff}|\cR_{i,I}-\tmu(\cR_{i,I})|^{1-\frac{\gamma}{\beta_i^*-1} } \quad \mbox{on }{M_{i}^I}
\eeq
and
$$\cov\left(E_{i,I},E_{i,I}\circ F^k\right) \le C
n^{\frac 2{\alpha_i}-1-\frac 1{\alpha_i(\alpha_i+1)}}\vartheta^k $$
\end{lemma}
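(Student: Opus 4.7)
The plan is to follow the template of Lemma 4.4 and Proposition 4.2 of \cite{JZ17}, adapted to the asymmetric cusp setting considered here. Fix $x \in M_i^I$ with $\cR(x) = m \in [\delta n^{1/\alpha_i}, n^{1/\alpha_i}/\delta]$, so that the excursion $(T^j x)_{0 \leq j < m}$ lies deep inside the cusp at $P_i$ after the first step, bouncing alternately between $\Gamma_i$ and $\Gamma_{i+1}$. From the cusp geometry of \cite{Z2016b}, the cusp coordinate $s_j$ and collision angle $\varphi_j$ at step $j$ satisfy $s_j \lesssim \min(j, m-j)^{-1/(\beta_i-1)}$ and $\varphi_j \to 0$ symmetrically about the midpoint of the excursion. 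Using $\gamma$-Hölder continuity of $f$ together with the definitions of $\tilde f_{i,\pm}$ gives $|f(T^j x) - \tilde f_{i,\epsilon_j}(\varphi_j)| \leq C\|f\|_\gamma\, s_j^\gamma$, with $\epsilon_j \in \{+,-\}$ recording the wall of the $j$-th bounce; summing yields
\[
\Big|\cS_m f(x) - \sum_{j=0}^{m-1} \tilde f_{i,\epsilon_j}(\varphi_j)\Big| \leq C\|f\|_\gamma \sum_j s_j^\gamma \leq C' \|f\|_\gamma\, m^{1 - \gamma/(\beta_i - 1)}.
\]
Next I would invoke a limit-profile statement (a refinement of the cusp dynamics analysis in \cite{Z2016b}) identifying the empirical distribution of the angles $\varphi_j$ along a typical excursion of length $m$ with the profile $\propto \sin^{1/\alpha_i}\varphi$, up to an $\cO(m^{-\kappa})$ correction. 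Substituting the definitions of $I_{f,i}$ and $I_1$ then gives $\sum_{j=0}^{m-1}\tilde f_{i,\epsilon_j}(\varphi_j) = (I_{f,i}/I_1)\, m + \cO(m^{1-\kappa})$, and since $|\cR_{i,I}(x) - \tmu(\cR_{i,I})| \asymp m$ on $M_i^I$ (the centering is uniformly bounded as $n\to\infty$, while $m \geq \delta n^{1/\alpha_i}$), this gives the claimed pointwise inequality after tidying constants.

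\textbf{Covariance bound.} For the second inequality, I would combine exponential decay of correlations for $F$ on a suitable piecewise Hölder class (standard for hyperbolic billiard induced maps, see \cite{chernov2005billiards, Z2016b}) with $L^p$ control of $E_{i,I}$ derived from the pointwise bound. The tail estimate $\tmu(\cR = m) \lesssim m^{-\alpha_i - 1}$ and the truncation $m \leq n^{1/\alpha_i}/\delta$ yield
\[
\|E_{i,I}\|_2^2 \leq C\|f\|_\infty^2 \int_{\delta n^{1/\alpha_i}}^{n^{1/\alpha_i}/\delta} m^{2 - 2\gamma/(\beta_i-1)}\, m^{-\alpha_i-1}\, dm = \cO\!\left( n^{2/\alpha_i - 1 - 2\gamma(\alpha_i - 1)/\alpha_i} \right).
\]
Because $E_{i,I}$ jumps across level sets of $\cR$, decay of correlations cannot be applied directly; I would mollify $E_{i,I}$ at scale $\rho$ into a function $E_{i,I}^\rho$ whose Hölder norm is $\cO(\rho^{-b})$ and with $\|E_{i,I} - E_{i,I}^\rho\|_1 = \cO(\rho^a \|E_{i,I}\|_\infty)$ for appropriate $a,b>0$, then estimate
\[
\bigl|\cov(E_{i,I}, E_{i,I}\circ F^k)\bigr| \leq \bigl|\cov(E_{i,I}^\rho, E_{i,I}^\rho \circ F^k)\bigr| + 2\|E_{i,I}\|_\infty\|E_{i,I} - E_{i,I}^\rho\|_1.
\]
Balancing $\rho^{-b}\vartheta_0^k$ against $\rho^a \|E_{i,I}\|_\infty^2$ and slightly worsening the decay rate to some $\vartheta \in (\vartheta_0, 1)$ absorbs polynomial prefactors; the particular loss $n^{-1/(\alpha_i(\alpha_i+1))}$ off the trivial $L^2$ rate falls out of this optimization, and any strictly positive gap is all that the applications in Section \ref{sec:proof} require.

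\textbf{Main obstacle.} The principal new technical point, relative to \cite{JZ17}, is the limit-profile identity used in the pointwise step. In the symmetric case $C_{i,+} = C_{i,-}$ of \cite{JZ17}, both walls contribute identical empirical distributions of $\varphi_j$ by obvious symmetry, so only a single boundary function of $f$ enters. In the asymmetric case one must track even and odd bounces separately and verify that both wall-conditional empirical distributions still converge to the same profile $\propto \sin^{1/\alpha_i}\varphi$; this is precisely what forces $I_{f,i}$ to involve the symmetric combination $\tilde f_{i,+} + \tilde f_{i,-}$ rather than either piece alone. The rest --- Hölder remainders, moment estimates, and the decay-of-correlations scheme --- transfers from the symmetric setting essentially unchanged once this profile identity is secured.
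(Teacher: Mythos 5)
Your outline follows the same two-step scheme the paper uses (pointwise H\"older bound along the corner series, then decay of correlations for the covariance), and you correctly isolate the one genuinely new ingredient: the identification of the angular profile $\propto\sin^{1/\alpha_i}\varphi$ along an excursion in an \emph{asymmetric} cusp, which is what makes the symmetric combination $\tilde f_{i,+}+\tilde f_{i,-}$ appear. However, at exactly that point your argument has a gap rather than a proof. The paper does not leave that profile statement as an invocation --- it is the content of Proposition~\ref{prop3} together with the elliptic-integral computation \eqref{vnnB}--\eqref{toto} in Appendix~\ref{sec:scale}, which show $w_n=\int_0^{\gamma_n}\sin^{1-1/\beta}u\,du = 4\bar c\,n\,C_N^{1/\alpha}+\cO(\ln n/N)$ and pin down $C_N N'^\alpha=\bar c^{-\alpha}I_1^\alpha+\cO(\ln N'/N')$ with $\bar c=(c_0+c_1)/2$. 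That is precisely the asymmetric replacement of equations (6.8)--(6.9) in \cite{JZ17} that the paper's proof of Lemma~\ref{lem2} hinges on; writing ``I would invoke a limit-profile statement'' skips the only step that actually requires new work. To be complete you would need to redo the alternating even/odd bounce bookkeeping (the $v_n$, $v_n'$, $\alpha_n$, $\alpha_n'$ recursions \eqref{3.6}--\eqref{3.612}) to verify that the increments of $w_n$ depend on $\alpha_n+\alpha'_{n-1}\approx 2\bar c\, s_n^{\beta-1}$, not on the individual $c_0,c_1$.

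Two smaller points. In your $L^2$ estimate the exponent should be $\tfrac{2}{\alpha_i}-1-\tfrac{2\gamma(\alpha_i-1)}{\alpha_i^2}$, not $\tfrac{2}{\alpha_i}-1-\tfrac{2\gamma(\alpha_i-1)}{\alpha_i}$ --- you dropped a factor of $1/\alpha_i$ when translating the bound from $m\le n^{1/\alpha_i}/\delta$ into a power of $n$. And for the covariance step, your mollification scheme is a reasonable alternative, but the paper (via \cite[Lemma 4.4, Prop.\ 4.2]{JZ17}) instead leans on the fact that $E_{i,I}$ is piecewise dynamically H\"older, with discontinuities only along boundaries of the level sets $M_{i,m}$, so that the Chernov--Zhang decay-of-correlations machinery applies directly without smoothing; your remark that only a strictly positive loss exponent (rather than the precise $1/(\alpha_i(\alpha_i+1))$) is needed downstream is correct and is indeed what Proposition~\ref{PROPsimpl} uses.
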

The proof of the above lemma is similar to arguments in \cite{JZ17}; we indicate which estimates in the proof need updates and/or adjustments at the end of Appendix \ref{sec:scale}.

\begin{proof}[Proof of Proposition \ref{PROPsimpl}]
Lemma \ref{lem2} implies that there exists $K>0$ such that for all $m$,
$${\tilde\mu ((S_mE_{i,I})^2)=}\var(S_mE_{i,I})\le 2\sum_{k=0}^{m-1}(m-k) \cov(E_{i,I},E_{i,I}\circ F^k) \le  K\,
m\, n^{\frac 2{\alpha_i}-1-\frac 1{\alpha_i(\alpha_i+1)}}$$
The exponent of $n$ can be rewritten $-\frac{\alpha_i^2-\alpha_i-1}{\alpha_i(\alpha_i+1)}\le -\frac{\alpha^2-\alpha-1}{\alpha(\alpha+1)}$
since $\alpha_i\ge \alpha$.
So, by \cite[Exercise 12.5]{billingsley1968convergence} (with $\gamma=2,\alpha=1$ there, see also \cite[Eq. (2.3)]{serfling1970moment}), for all $m$,
$$\tilde\mu\left(\sup_{k=1,...,m}|S_kE_{i,I}
|^2\right)\le (\log_2 4m)^2 K\, 
m\, n^{-\frac{\alpha^2-\alpha-1}{\alpha(\alpha+1)}}.$$
there exist $K_0=K_0(\delta)>0$ and $\varepsilon>0$ such that,
\begin{equation}\label{majosup}
\text{for all } n\ge 0,\quad \tilde\mu\left(\sup_{k=1,...,n}|S_kE_{i,I}
|^2\right)\le  K_0\,  n^{\frac 2\alpha-\varepsilon}\, .
\end{equation}
Proceeding analogously, due to Lemma \ref{LHzero} applied to $\tilde f-\tilde R$ instead of $\tf$, there
exist $K'_0>0$ and $\kappa>0$ such that
$$
\text{for all } n\ge 0,\quad \tilde\mu\left(\sup_{k=1,...,n}
\left|S_k E_{i,L}
\right|^2\right)\le  K'_0\, \delta^{\kappa} n^{\frac 2\alpha-\varepsilon}\, .
$$

Fix $\varepsilon>0$.
{Due to the Markov inequality}, we can find $\delta>0$ small enough so that for all $n$
$$\tilde\mu\left(\sup_{k\le n}\frac{\left|S_k
\left(\sum_{i\in\mathcal J}E_{i,L} \right)
\right|
}{n^{\frac 1\alpha}}>\frac\varepsilon 4\right)<\frac\varepsilon 4\,  .$$
Also, by \eqref{bound on H}, we can choose $\delta>0$ small enough so that for all $n$
	$$\tilde\mu\left(\sup_{k\le n}\frac{\left|S_k
			\left(\sum_{i\in\mathcal J}E_{i,H} \right)
			\right|
	}{n^{\frac 1\alpha}}>\frac\varepsilon 4\right)<\frac\varepsilon 4\,  .$$
Now, due to \eqref{majosup} and Markov's inequality,
for  the minimum of these choices of $\delta$, and for $n$ large enough,
$$\tilde\mu\left(
    \sup_{k\le n}\frac{\left|S_k\left(\sum_{i\in\mathcal J} E_{i,I}\right)\right|}{n^{1/\alpha}}
     >\frac\varepsilon 2 \right)<\frac\varepsilon 2.$$
\end{proof}

%
%
%
%
\subsection{Convergence in distribution}
Due to the simplification in the previous subsection, it remains now to prove the convergence in distribution as $n\to\infty$ for the following functions of $t$ (in the $J_1$-metric)
$$\left(\left(\mathcal Z_n(t):=\frac{S_{[nt]}\sum_{i\in\mathcal J} A_i\left(\mathcal R_i-\tilde\mu(\mathcal R_i)\right)}{n^{1/\alpha}}\right)_{t}\right)_n$$
for any fixed real numbers {$\{A_i, i\in\mathcal J_*\}$.}
To this end, we consider the family of point processes $(N_n)_n$
on 
$(0,+\infty)\times(\mathbb R\setminus\{0\})$
given by
$$N_n:=\sum_{j= 1}^n\delta_{\left(\frac j n,\frac{Z_{j-1}}{n^{1/\alpha}}\right)},\quad\mbox{with}\ Z_j:=\left[\sum_{i\in\mathcal J} A_i (\mathcal R_i-\tilde\mu(\mathcal R_i))\right]\circ F^j\, , $$
and we employ standard methods for proving functional limit theorems from  \cite[Sec. 4]{durrett1978functional}. Here we state a version from \cite[Thm 1.2]{tyran2010weak}.
\begin{proposition}\label{prop:tk}
Assume the following two conditions.

\noindent\textbf{Condition I.} (Point process convergence).
The sequence of point processes $(N_n)_n$ converges in distribution
to a Poisson point process $N$ on 
$(0,+\infty)\times (\mathbb R\setminus\{0\})$  with intensity 
$\gamma$ having density $\psi$ with respect to the Lebesgue
measure on $(0,+\infty)\times(\mathbb R\setminus\{0\})$, with
$$\psi(t,y)= \sum_{i\in\mathcal J_*}  \frac{2
I_1^\alpha\, |A_i|^\alpha}{\beta\bar C_i^{\alpha-1}\mu(M)|\partial Q|}{\mathbf 1_{\{y A_i>0\}}}\alpha|y|^{-\alpha-1}.$$
\noindent\textbf{Condition II} (Vanishing small values). For all $\gamma>0$
$$\lim_{\eps\to 0}\limsup_{n\to\infty}\tmu\left[\max_{0\leq k\leq n}\left|\sum_{j=0}^{k-1}\left(\frac{Z_{j}}{n^{1/\alpha}}\cdot\bI_{|Z_{j}|/n^{1/\alpha}<\eps}\right)-k\tmu\left(\frac{Z_{1}}{n^{1/\alpha}}\cdot\bI_{|Z_{1}|/n^{1/\alpha}<\eps}\right)\right|>\gamma\right]=0.$$

Then $((\mathcal Z_n(t))_t)_n$ (and so $((\frac1{n^{1/\alpha}}S_{\lfloor nt\rfloor} \tf)_t)_n$) converges in distribution
(in the $J_1$-metric) to an $\alpha$-stable L\'evy motion $(\mathcal Y_t)_t$ such that $\mathcal Y_1$ has the same distribution as
$ Z_{\alpha,\xi,\sigma}$
with
$$\sigma^\alpha:=\sum_{i\in\mathcal J_*}  \frac{2I_1^\alpha\, |A_i|^\alpha}{\beta\bar C_i^{\alpha-1}\mu(M)|\partial Q|}\,\,\,\,\,\,\text{ and }\,\,\,\,\,\, \xi:=\frac{\sum_{i\in\mathcal J_*} \sign(A_i)\,  \frac{2I_1^\alpha\, |A_i|^\alpha}{\beta\bar C_i^{\alpha-1}\mu(M)|\partial Q|}}{\sum_{i\in\mathcal J_*}  \frac{2I_1^\alpha\, |A_i|^\alpha}{\beta\bar C_i^{\alpha-1}\mu(M)|\partial Q|}}.$$
\end{proposition}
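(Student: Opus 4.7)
My plan is to follow the classical Durrett--Resnick scheme \cite{durrett1978functional}, adapted as in \cite{tyran2010weak}, which reconstructs the $\alpha$-stable L\'evy motion as a functional of the limiting Poisson point process. The idea is to truncate the sum at level $\varepsilon$, treat large and small jumps separately, and pass to the limit $\varepsilon\to 0$.

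First I would verify that the candidate limit $(\mathcal Y_t)_t$ is the correct one. The Poisson intensity $\psi$ in Condition I is, after integrating in $t\in(0,1)$, the L\'evy measure
$$
\nu(dy)=\sum_{i\in\mathcal J_*}\frac{2I_1^{\alpha}|A_i|^{\alpha}}{\beta\bar C_i^{\alpha-1}\mu(M)|\partial Q|}\,\mathbf 1_{\{yA_i>0\}}\,\alpha|y|^{-\alpha-1}\,dy,
$$
and a direct computation of the tail masses $\nu(y,\infty)$ and $\nu(-\infty,-y)$ identifies the L\'evy process whose time-$1$ marginal has the stated stable law with parameters $(\sigma,\xi)$ announced in the proposition; the drift vanishes because of the symmetric recentering $\tilde\mu(\mathcal R_i)$.

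Next, for $\varepsilon>0$, define the truncated partial-sum processes
\begin{equation*}
\mathcal Z_n^{(\varepsilon)}(t):=\sum_{j=1}^{\lfloor nt\rfloor}\frac{Z_{j-1}}{n^{1/\alpha}}\mathbf 1_{|Z_{j-1}|/n^{1/\alpha}\ge\varepsilon},
\qquad
\tilde{\mathcal Z}_n^{(\varepsilon)}(t):=\mathcal Z_n(t)-\mathcal Z_n^{(\varepsilon)}(t).
\end{equation*}
The process $\mathcal Z_n^{(\varepsilon)}$ is precisely the image of the random point measure $N_n$ under the summation functional
$$
\Psi_\varepsilon(\eta)(t):=\int_{(0,t]\times\{|y|\ge\varepsilon\}}y\,\eta(ds,dy).
$$
I would then invoke the continuous mapping theorem: on the set of Radon point measures whose restriction to $(0,1]\times\{|y|\ge\varepsilon\}$ carries no atom on $\{t=1\}$ nor on $\{|y|=\varepsilon\}$, the functional $\Psi_\varepsilon$ is continuous from the vague topology into $(\mathbb D[0,1],J_1)$ (this is the standard lemma used in \cite{durrett1978functional}; the points of discontinuity form a $\P_N$-null set because $N$ is a Poisson process with a non-atomic intensity). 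Thus Condition I gives
$\mathcal Z_n^{(\varepsilon)}\Rightarrow \mathcal Y^{(\varepsilon)}$ in $J_1$, where $\mathcal Y^{(\varepsilon)}$ is the large-jump part of $\mathcal Y$ at threshold $\varepsilon$, minus its (vanishing) mean correction coming from the recentering.

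The heart of the argument is then the standard $\varepsilon\to 0$ passage. One writes
$\mathcal Z_n=\mathcal Z_n^{(\varepsilon)}+\tilde{\mathcal Z}_n^{(\varepsilon)}$ with
$$
\tilde{\mathcal Z}_n^{(\varepsilon)}(t)=\sum_{j=0}^{\lfloor nt\rfloor-1}\frac{Z_{j}}{n^{1/\alpha}}\mathbf 1_{|Z_{j}|/n^{1/\alpha}<\varepsilon}-\lfloor nt\rfloor\tilde\mu\!\left(\tfrac{Z_1}{n^{1/\alpha}}\mathbf 1_{|Z_1|/n^{1/\alpha}<\varepsilon}\right)+o(1),
$$
the $o(1)$ arising from the recentering of $Z_j$ by $\sum A_i\tilde\mu(\mathcal R_i)$, which contributes a deterministic linear-in-$t$ term whose slope vanishes as $n\to\infty$ uniformly in $\varepsilon$ (by Kac \eqref{eq:finite mean}). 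Condition II says exactly that the sup-norm of $\tilde{\mathcal Z}_n^{(\varepsilon)}$ is $\tilde\mu$-negligible in the iterated-limit sense $\lim_{\varepsilon\to 0}\limsup_n$. On the limit side, as $\varepsilon\to 0$ the process $\mathcal Y^{(\varepsilon)}$ converges in probability, in the uniform (hence $J_1$) topology on $[0,1]$, to $\mathcal Y$, because $\mathcal Y$ is pure-jump $\alpha$-stable and its small-jump compensated sum converges u.c.p. to $0$. A standard approximation lemma (e.g.\ Theorem 4.2 in \cite{billingsley1968convergence}) then upgrades the pair of limits to $\mathcal Z_n\Rightarrow\mathcal Y$ in $J_1$.

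The main obstacle will be the $J_1$-continuity of $\Psi_\varepsilon$ at the limit measure together with the handling of the mean-correction coming from $\tilde\mu(\mathcal R_i)$: the partial-sum process $\mathcal Z_n$ is centered, whereas the natural image of $N_n$ under $\Psi_\varepsilon$ is not. One has to check that the difference $\lfloor nt\rfloor\tilde\mu(Z_1 n^{-1/\alpha}\mathbf 1_{|Z_1|<\varepsilon n^{1/\alpha}})$ converges (uniformly in $t$) to the compensator of the truncated L\'evy integral as $n\to\infty$, and that the remaining term vanishes as $\varepsilon\to 0$; this is where one uses the tail asymptotics implicit in Condition~I (the intensity $\psi$) to match the deterministic drifts. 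Once this matching is done, the rest of the argument is a standard tightness/approximation combination.
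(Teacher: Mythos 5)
The paper does not prove Proposition \ref{prop:tk}: it introduces the statement with ``Here we state a version from \cite[Thm 1.2]{tyran2010weak}'' and treats it as a citation, with the remainder of Section~\ref{sec:proof} devoted to verifying Conditions~I and~II in the billiard setting. Your sketch is therefore not an alternative to an argument in the paper but a reconstruction of the proof in the cited reference, and it is a correct outline of that Durrett--Resnick/Tyran-Kami\'nska scheme: truncate at level $\eps$, identify the large-jump part $\mathcal Z_n^{(\eps)}$ with the image of $N_n$ under the summation functional $\Psi_\eps$, invoke $J_1$-continuity of $\Psi_\eps$ at measures with no atoms on $\{t=1\}$ or $\{|y|=\eps\}$ (a.s.\ true for the limiting Poisson measure since $\psi$ is a density), control the compensated small-jump remainder via Condition~II, and close the loop with the converging-together theorem. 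The one place your sketch correctly flags as nontrivial but does not fully resolve is the drift matching for $\alpha\in(1,2)$: since $\tilde\mu(Z_1)=0$, the centering term equals $-\tilde\mu\bigl(Z_1 n^{-1/\alpha}\bI_{|Z_1|\ge\eps n^{1/\alpha}}\bigr)$, whose limit (obtained from the tail asymptotics, i.e.\ Lemma~\ref{Mm2}) must be absorbed into the definition of $\mathcal Y^{(\eps)}$ so that $\mathcal Y^{(\eps)}\to\mathcal Y$ u.c.p.; this bookkeeping is carried out in \cite{tyran2010weak}, and your proposal would need to spell it out to be complete. With that caveat, your approach agrees with the authoritative source that the paper defers to.
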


\subsection{A key estimate for Condition~I}
In order to prove the convergence in distribution $N_n\rightarrow N$, we use the Kallenberg theorem or a method based on this theorem.
A key result in this study is the following lemma, which was proved as  Lemma 2.2 \cite{JZ17} only for one specific symmetric cusp.  The generalization of this in our context is the key estimate that shows that only cusps of maximal flatness are relevant. The modifications to the proof given in \cite{JZ17} is nontrivial, thus we provide the proof of this lemma in Appendix \ref{sec:scale}.
\begin{lemma}\label{Mm2}
The return time function satisfies 
\beq
	\forall i\in\mathcal J,
	\,\,\,\,\,\,\,\label{eq:scale parameter}\lim_{y\to{+\infty}} y^{\alpha_i}\tilde\mu\left(x\in M_i: \cR(x) >y\right)=\frac{2
		I_1^{\alpha_i}}{\beta_i^*\bar C_i^{\alpha_i-1}\mu(M)|\partial Q|}\, 
,\eeq
with $\alpha_i:=\frac{\beta_i^*}{\beta_i^*-1}$.
\end{lemma}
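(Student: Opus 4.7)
Fix $i \in \mathcal J$. For $x \in M_i$, the orbit under $T$ leaves $M$, has its first collision inside $\cup_j B_\epsilon(P_j)$ in $B_\epsilon(P_i)$ at some point $(s^{\mathrm{in}},\varphi^{\mathrm{in}},\sigma^{\mathrm{in}})$ (with $\sigma^{\mathrm{in}}\in\{+,-\}$ specifying the branch struck first), bounces $N=N(s^{\mathrm{in}},\varphi^{\mathrm{in}},\sigma^{\mathrm{in}})$ times within the cusp before leaving, and then returns to $M$ after $\cO(1)$ additional iterations (uniformly bounded thanks to the standing assumption that tangent rays emerging from the cusps land outside all cusps). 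Hence $\mathcal R(x)=N+\cO(1)$ and it suffices to estimate the tail of $N$ as a function of the entry data.

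The bulk of the work is to obtain a precise asymptotic for $N$ when $(s^{\mathrm{in}},\varphi^{\mathrm{in}})$ is small. Using the local expansion \eqref{z1s}, the dynamics in the deep part of the cusp is close to the flow of an explicitly integrable model for which one identifies an almost-conserved quantity $H(s,\varphi)$ whose level sets trace the shape of an orbit. The core new difficulty compared to \cite{JZ17} is the asymmetry: consecutive collisions alternate between the $+$ and $-$ branches with different prefactors $C_{i,+}$ and $C_{i,-}$, so one must work with a two-step iteration; a pair-averaging argument shows that the effective coefficient to leading order is the arithmetic mean $\bar C_i=(C_{i,+}+C_{i,-})/2$. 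One then obtains a sharp formula for $N$ in terms of a natural ``depth'' parameter of the orbit, which, upon inverting, translates the event $\{\mathcal R>y\}$ into a precise small-$(s^{\mathrm{in}},\varphi^{\mathrm{in}})$ region.

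With the asymptotic for $N$ in hand, compute $\mu\{x\in M_i:\mathcal R(x)>y\}$ by integrating the $T$-invariant density $(2|\partial Q|)^{-1}\sin\varphi\,dr\,d\varphi$ over the entry region. Rescale $(s^{\mathrm{in}},\varphi^{\mathrm{in}})$ by the appropriate powers of $y$ so that the constraint becomes $y$-independent, with an ``angular'' variable $\varphi\in(0,\pi)$ parametrizing the direction of entry. The Jacobian of this rescaling produces the explicit constant: the $y$-scaling gives $y^{-\alpha_i}/(\beta_i^*\bar C_i^{\alpha_i-1})$, the angular integral yields a power of $I_1=\int_0^{\pi/2}\sin^{1/\alpha_i}\varphi\,d\varphi$ (specifically $I_1^{\alpha_i}$, matching the right-hand side of \eqref{eq:scale parameter}), a factor of $2$ comes from the two branches of the cusp contributing symmetrically after the $\bar C_i$ averaging, and the final division by $\mu(M)$ converts $\mu$ to $\tilde\mu$. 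This reproduces the claimed constant.

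The main obstacle will be the asymmetric deep-cusp analysis. Three items require careful control: (i) identifying and verifying the two-step adiabatic invariant and checking that pair-averaging indeed yields the arithmetic mean $\bar C_i$ rather than some other combination of $C_{i,+},C_{i,-}$; (ii) bounding the error from the higher-order corrections $\cO(s^{2\beta_i-1})$ in \eqref{z1s} and showing they perturb $N$ by a negligible amount compared to the leading scaling in $y$; and (iii) treating the boundary layer of the first few and last few collisions, where the orbit is not yet in the adiabatic regime, and showing these produce only an $\cO(1)$ contribution that does not affect the tail. These ingredients together constitute the nontrivial modifications of the \cite{JZ17} argument that the appendix will carry out.
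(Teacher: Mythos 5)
Your sketch captures the geometric core of the argument correctly: the almost--conserved quantity $H(r,\varphi)=|r|^{\beta}\sin\varphi$ along a corner series, the two-step (asymmetric) analysis that averages $C_{i,+}$ and $C_{i,-}$ into $\bar C_i$, and the resulting scaling $N\approx C\,y^{-\alpha_i}$ for the depth parameter $C$. Those parts match the paper's Proposition~\ref{prop3} and the elliptic-integral computation of $C_N N^{\alpha}$ in Section~\ref{sec:scale}.

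However, the step from those estimates to the actual tail asymptotic has a genuine gap. You propose to ``integrate the invariant density over the entry region'' $\tilde M\cap\{N>y\}$, rescaling so the constraint becomes $y$-independent. The trouble is that the entry set $\tilde M$ (collision points in the cusp whose predecessor lies outside) is geometrically awkward to parametrize: entrances occur at all depths, the region is a thin, implicitly defined subset of $(r,\varphi)$-space, and one cannot straightforwardly pass a single rescaling through the constraint $\{N>y\}$ because $N$ is not an explicit function of the entry data. The paper deliberately \emph{avoids} this computation. Instead, using $T$-invariance of $\nu$, it writes $\nu(\tilde M_m)=\tfrac1m\nu(\cup_{n\le m}T^n\tilde M_m)$, and then sums over $m\ge y$ to convert the problem into computing $\nu$ of the \emph{entire} level set $\{H\le C_Ny^{-\alpha}\}$ over the cusp -- a simple explicit 2D integral. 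But that produces the \emph{weighted} sum $\sum_{m\ge y}(m-y+1)\nu(\tilde M_m)$, not the tail $\mathfrak p_y=\sum_{m\ge y}\nu(\tilde M_m)$ itself. Extracting $\mathfrak p_y$ from that requires the paper's adapted Tauberian argument (using monotonicity of $\mathfrak p_m$, since monotonicity of $m\,\nu(\tilde M_m)$ is unknown). This Tauberian step is essential and is entirely absent from your proposal. Without it the claim that the rescaling ``reproduces the claimed constant'' is unjustified. Two smaller points: the factor $2$ in the final constant does not come from ``the two branches contributing symmetrically after $\bar C_i$-averaging''; it comes from the angular integral $\int_0^{\pi}\sin^{1/\alpha}\varphi\,d\varphi=2I_1$. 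And the exponent $I_1^{\alpha_i}$ arises not from ``the angular integral'' alone, but from the combination of the depth rescaling $C_N\sim (2\bar c)^{-\alpha}I_1^{\alpha}N^{-\alpha}$ with that angular integral, which combine to give $I_1^{\alpha/\beta}\cdot I_1 = I_1^{\alpha}$.
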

Since $\alpha_i>\alpha$ for $i\notin {\cal J_*}$, this lemma ensures in particular that
$$ \lim_{y\to{+\infty}} y^{\alpha}\tilde\mu\left(x\in M_i: \cR(x) >y\right)>0\quad \Leftrightarrow\quad i\in\mathcal J_*\, .$$
\begin{corollary}\label{coroMm2}
For any $\epsilon>0$,
\begin{align*}
\lim_{y\to+\infty} y^{\alpha} \tilde\mu( \epsilon^{1/\alpha}Z_{-1}>y)
&=\lim_{y\to+\infty} y^{\alpha}\sum_{i\in\mathcal J}
\tilde \mu\left(M_i\cap\{ A_i \mathcal R_i-\sum_jA_j\tilde\mu(\mathcal R_j)>\epsilon^{-\frac 1\alpha}y\}\right)\\
&=\lim_{y\to+\infty} y^{\alpha}\sum_{i\in\mathcal J\, :\, A_i>0}
\tilde \mu\left(M_i\cap\left\{\mathcal R>\frac{\epsilon^{-\frac 1\alpha}y}{A_i}+\frac 1{A_i}\sum_jA_j\tilde\mu(\mathcal R_j)\right\}\right)\\
&= \epsilon \sum_{i\in\mathcal J_*\, :\, A_i>0}  \frac{2
	I_1^\alpha\, A_i^\alpha}{\beta\bar C_i^{\alpha-1}\mu(M)|\partial Q|}\, .
\end{align*}
and analogously
$$\lim_{y\to+\infty} y^{\alpha} \tilde\mu( \epsilon^{1/\alpha}Z_{-1}<-y)
= \epsilon \sum_{i\in\mathcal J_*\, :\, A_i<0}  \frac{2
	I_1^\alpha\, |A_i|^\alpha}{\beta\bar C_i^{\alpha-1}\mu(M)|\partial Q|}\, .
$$
\end{corollary}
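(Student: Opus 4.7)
\textbf{Proof plan for Corollary \ref{coroMm2}.} My plan is to reduce the statement directly to Lemma \ref{Mm2} via a partition of $M$ and a change of variables in the threshold. First, since $F$ preserves $\tilde\mu$, the random variable $Z_{-1}=Z_0\circ F^{-1}$ has the same distribution under $\tilde\mu$ as $Z_0=\sum_{i\in\mathcal J} A_i(\mathcal R_i-\tilde\mu(\mathcal R_i))$. Decomposing $M=M_0\cup\bigcup_{i\in\mathcal J}M_i$ and recalling $\mathcal R_i=\mathcal R\mathbf 1_{M_i}$, I observe that on each $M_i$ all cross terms $\mathcal R_j$ ($j\neq i$) vanish, while $Z_0$ restricted to $M_0$ equals the finite constant $-\sum_j A_j\tilde\mu(\mathcal R_j)$. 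For $y$ large enough, $M_0\cap\{Z_0>\epsilon^{-1/\alpha}y\}$ is therefore empty, which gives the first equality.

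Next, I would solve the inequality $A_i\mathcal R-\sum_j A_j\tilde\mu(\mathcal R_j)>\epsilon^{-1/\alpha}y$ on $M_i$ for $\mathcal R$. When $A_i>0$ this is exactly the inequality appearing on the second line of the corollary; when $A_i\leq 0$ the same algebra forces $\mathcal R$ to lie below a quantity tending to $-\infty$, so the event is empty for $y$ large. Discarding the non-positive $A_i$ then yields the second equality.

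Finally, I apply Lemma \ref{Mm2} to each surviving term. The additive constant $\frac{1}{A_i}\sum_j A_j\tilde\mu(\mathcal R_j)$ is finite by the Kac formula \eqref{eq:finite mean}, so it is $O(1)$ while the threshold grows like $y/A_i$; it can therefore be absorbed into a $(1+o(1))$ multiplicative perturbation of the threshold without disturbing the power-law asymptotic. Lemma \ref{Mm2} then yields
$$
y^\alpha\,\tilde\mu\!\left(M_i\cap\Big\{\mathcal R>\tfrac{\epsilon^{-1/\alpha}y}{A_i}(1+o(1))\Big\}\right)
\sim \frac{2\, I_1^{\alpha_i}}{\beta_i^*\bar C_i^{\alpha_i-1}\mu(M)|\partial Q|}\,A_i^{\alpha_i}\,\epsilon^{\alpha_i/\alpha}\,y^{\alpha-\alpha_i}.
$$
For $i\in\mathcal J_*$ we have $\alpha_i=\alpha$, so $y^{\alpha-\alpha_i}=1$ and $\epsilon^{\alpha_i/\alpha}=\epsilon$, producing exactly the stated prefactor; for $i\notin\mathcal J_*$ we have $\alpha_i>\alpha$, so $y^{\alpha-\alpha_i}\to 0$ and the contribution is negligible. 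Summing over $i\in\mathcal J_*$ with $A_i>0$ gives the first identity, and the lower-tail identity is obtained identically by reversing the direction of the inequality (or applying the same argument to $-Z_{-1}$).

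I do not foresee a substantive obstacle: the only thing requiring care is confirming that the $O(1)$ shift in the threshold does not distort the limit, which is immediate from finiteness of $\tilde\mu(\mathcal R_i)$. All the heavy lifting is already contained in Lemma \ref{Mm2}; the corollary amounts to an algebraic repackaging of that lemma across the finite collection of cusps, together with the dichotomy $\alpha_i=\alpha$ vs.\ $\alpha_i>\alpha$, which isolates the contribution of the maximally flat cusps.
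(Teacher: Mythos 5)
Your proposal is correct and follows the same route as the paper, which presents the corollary as a self-explanatory chain of equalities (decompose over $M_i$ using $F$-invariance and $\mathcal R_j\mathbf 1_{M_i}=0$ for $j\neq i$, solve the inequality for $\mathcal R$ keeping only $A_i>0$, then apply Lemma~\ref{Mm2} with the $O(1)$ centering absorbed into the threshold). Your added observation that the $i\notin\mathcal J_*$ terms vanish because $y^{\alpha-\alpha_i}\to 0$ is exactly the mechanism the paper relies on and is correctly justified.
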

\subsection{Condition~I: Point process convergence}
In order to prove the convergence in distribution $N_n\rightarrow N$, due to the Kallenberg theorem \cite{kallenberg1973characterization} (see also \cite[Prop. 3.22]{resnick1987extreme}), it is enough to prove that
\begin{equation}\label{Kall1}
\lim_{n\rightarrow +\infty}\tilde\mu\left(N_n(R)\right)=\eta(R)\,
\end{equation}
and
\begin{equation}\label{Kall2}
\lim_{n\rightarrow+\infty}\tilde\mu(N_n(R)=0)=e^{-\eta(R)}\, ,
\end{equation}
for every $R$ of the form $$R=\bigcup_{i=1}^m(a_i,b_i)\times 
I_{c_i,c'_i}
,$$
with $I_{c,c'}=(-\infty,-c)\cup(c',+\infty)$,
$0<a_i<b_i<1$ and $c_i,c'_i>0$ for every $i$.
In $\R\backslash\{0\}$, we fix a subset 
{$\cI=I_{c,c'}$ with $c,c'>0$.}
Our correlation bounds will depend on sets of the form
$$D_{n,j}:=\{{x\in M}:\frac{1}{\sqrt[\alpha]{n}}(\cR_j-c)\circ F^{j}(x) \in \cI\}.$$
     \begin{lemma}[Exponential decay of correlations for $q$-point marginals, \cite{CZ09, JZ17}]\label{boundedcov}
For every $\cI$, there is a constant $C>0$  and $\theta\in(0,1)$ such that 
\beq\label{eq:condition D_r(u_n)}
\tmu({D}_{n,1}^c\cap{\cdots}\cap {D}_{n,q}^c\cap{D}_{n,q+k+1}^c\cap\cdots\cap{D}_{n,2q+k}^c)-\(\tmu({D}_{n,1}^c\cap\cdots\cap {D}_{n,q}^c)\)^2\leq C \theta^k
\eeq
for all $k,n,q\in\N$ satisfying $2q+k\le n$.
Also, there exists $\theta_0>0$ such that for all $1\le i<j\le n$
\beq\label{eq:condition D_r(u_n)2}
\tmu({D}_{n,i}\cap{D}_{n,j})\leq o\(\frac1{n^{1+\theta_0}}\).
\eeq
     \end{lemma}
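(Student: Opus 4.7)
The plan is to derive both inequalities from the exponential decay of correlations of the induced billiard map $F:M\to M$, which is uniformly hyperbolic since all cusp neighborhoods $B_\epsilon(P_i)$ have been excised from $M$ and which therefore admits a Young tower representation with exponential correlation decay on piecewise H\"older observables (cf.\ \cite{CZ09}). The key observation is that $D_{n,j}=F^{-(j-1)}(D_{n,1})$, and that for $n$ large $D_{n,1}$ is of the form $\{\cR>c+c'n^{1/\alpha}\}$, a union of the ``long return'' atoms $\{\cR=m\}$ of the natural countable partition of $M$; these atoms are dynamically regular in that their boundaries lie along the singularity set of $F$, so their indicators are admissible test functions for the decay of correlations.

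For \eqref{eq:condition D_r(u_n)}, I would write
\[
A:=D_{n,1}^c\cap\cdots\cap D_{n,q}^c=\bigcap_{j=0}^{q-1}F^{-j}(D_{n,1}^c),
\]
so that the shifted intersection equals $F^{-(q+k)}(A)$ and the left-hand side becomes the correlation $\tilde\mu(\mathbf 1_A\cdot \mathbf 1_A\circ F^{q+k})-\tilde\mu(\mathbf 1_A)^2$ at lag $q+k$. Approximating $\mathbf 1_A$ from above and below by H\"older functions supported on unions of atoms of the $q$-refined partition, and then invoking the exponential decay of correlations for $F$, should yield a bound $C\theta^{q+k}\le C\theta^k$ uniform in $n$ and $q$; the key point is that $\tilde\mu(A)\le 1$ and the relevant H\"older (or bounded-variation) seminorms of the regularized approximants can be controlled independently of $q$ because $A$ is a union of cells of a dynamically refined partition whose boundary complexity is compensated by the exponential contraction along stable manifolds.

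For \eqref{eq:condition D_r(u_n)2}, Lemma \ref{Mm2} supplies $\tilde\mu(D_{n,1})=O(n^{-1})$. For $k:=|i-j|\ge K\log n$ with $K$ sufficiently large, the decay of correlations applied directly to $\mathbf 1_{D_{n,1}}$ gives
\[
\tilde\mu(D_{n,i}\cap D_{n,j})\le \tilde\mu(D_{n,1})^2+C\theta^k = O(n^{-2})+o(n^{-1-\theta_0})
\]
for a suitable small $\theta_0>0$. The main obstacle is the remaining short-gap case $k=O(\log n)$, where the exponential decay is not yet effective. Here a direct dynamical argument is needed: a point of $D_{n,i}$ spends a time of order $n^{1/\alpha}$ inside a single cusp after time $i$, and upon exiting must re-enter a (possibly different) cusp neighborhood to contribute to $D_{n,j}$. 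Bounding the density of the push-forward $F^\ell_*(\tilde\mu|_{D_{n,i}})$ uniformly on $M\setminus\bigcup B_\epsilon(P_j)$ and then reapplying the tail estimate of Lemma \ref{Mm2} to the subsequent excursion produces an additional factor of order $n^{-1}$ (up to a polylog), delivering the desired $o(n^{-1-\theta_0})$ bound. Relative to the single symmetric cusp setting of \cite[Lemma 2.2]{JZ17}, the argument must be run cusp-by-cusp and summed over $\mathcal J$, using crucially that by \eqref{Mdef} no $F$-trajectory jumps directly between two cusp regions without first visiting $M_0$.
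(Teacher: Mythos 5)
Your reduction is on the right track for both halves: the observation that $D_{n,j}=F^{-(j-1)}(D_{n,1})$, so that the left side of \eqref{eq:condition D_r(u_n)} is a lag-$(q+k)$ autocorrelation of $\mathbf 1_A$ with $A=\bigcap_{j=0}^{q-1}F^{-j}(D_{n,1}^c)$, matches the structure the paper exploits; and for \eqref{eq:condition D_r(u_n)2} your appeal to Lemma~\ref{Mm2}, the split into large and small gaps, the cusp-by-cusp accounting over $\mathcal J$, and the use of the no-direct-jump property from \eqref{Mdef} all track what the paper actually does (it runs the proof of Lemma~3.2 of \cite{JZ17} with Proposition~\ref{prop3} substituted for Lemma~2.1 of that paper).

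The genuine gap is in your treatment of \eqref{eq:condition D_r(u_n)}. You propose to regularize $\mathbf 1_A$ by H\"older functions and plug into the scalar decay-of-correlations estimate, asserting that the H\"older seminorms of the approximants can be taken uniform in $q$. That claim is not justified and is, in fact, the whole difficulty: the boundary of $A$ is contained in $\bigcup_{j=0}^{q-1}F^{-j}(\partial D_{n,1})$, whose total one-dimensional measure grows with $q$ (and with $n$, since $\partial D_{n,1}$ is a union of singularity arcs bounding the many thin cells $\{\mathcal R=m\}$, $m\gtrsim n^{1/\alpha}$). A $\gamma$-H\"older approximant of $\mathbf 1_A$ with $L^1$-error $\delta$ then has seminorm of order at least $(\mathrm{length}(\partial A)/\delta)^{\gamma}$, so the norm certainly is not uniform in $q$; at best one could hope that a polynomial-in-$q$ growth is absorbed by $\theta^{q+k}$, and even that would require an explicit estimate you have not given. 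The paper avoids this entirely by citing Theorem~4 of \cite{CZ09}, which is formulated directly at the level of sets whose boundaries lie along (un)stable curves of $F$ and which yields a rate not degrading under intersection with further preimages, so the constant is uniform in $q$ and $n$ by construction. Your route could conceivably be repaired, but as written it replaces the one nontrivial point with an assertion.

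For \eqref{eq:condition D_r(u_n)2} your large-gap argument ($k\ge K\log n$, giving $O(n^{-2})+C\theta^{K\log n}=o(n^{-1-\theta_0})$) is correct, but you yourself flag the short-gap case as "the main obstacle" and then only outline a plan: bound the density of $F^\ell_*(\tmu|_{D_{n,i}})$ uniformly and reapply the tail estimate. That is precisely the content of Lemma~3.2 of \cite{JZ17}; stating the plan does not substitute for the estimate, which requires the quantitative corner-series geometry of Proposition~\ref{prop3} (expansion factors along an excursion, the $H_n$ invariant, the parametrization of $\tilde M_m$) to make "uniform density of the push-forward" meaningful for a set of measure $O(n^{-1})$. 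Your instinct about which ingredients are needed is right, but the proposal leaves the short-gap estimate unproven.
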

 
 The first part above, \eqref{eq:condition D_r(u_n)} follows from Theorem 4 in \cite{CZ09} which covers the setting we are in. For \eqref{eq:condition D_r(u_n)2}, the proof of Lemma 3.2 in \cite{JZ17} can be used by replacing Lemma 2.1 there with the estimates we provide in Proposition \label{prop3} in the appendix.
\subsubsection{Proof of \eqref{Kall1}}
Let $m\ge 1$, and real 
{numbers $a_1,b_1,c_1,c'_1,...,a_m,b_m,c_m,c'_m$ be such that, for every $i$, $0<a_i<b_i$ and $c_i<c'_i$ with $c_ic'_i>0$}.
We assume without any loss of generality that $b_i\le a_{i+1}$.

Let $R:=\bigcup_{i=1}^m(a_i,b_i)\times 
I_{c_i,c'_i}
$, so that
$$\tilde\mu\left(N_n(R)\right) =\sum_{i=1}^m \sum_{j:na_i<j<nb_i}\sum_{k\in\mathcal J}\tilde\mu\left(M_k;\ 
({A_k}\mathcal R_k-\sum_{i}A_i\tilde\mu(\mathcal R_i))\circ F^j
 \in I_{c_i n^{\frac 1\alpha},c'_i n^{\frac 1\alpha}}
\right)\, .$$
Due to Corollary~\ref{coroMm2},
$$\tilde\mu\left(N_n(R)\right) \sim \frac{
2
 I_1^\alpha}{\beta\bar C_i^{\alpha-1}\mu(M)|\partial Q|}\sum_{i=1}^m \sum_{k\in\mathcal J_*}(b_i-a_i)\mathbf 1_{\{A_k c_i>0\}}|A_k|^\alpha.$$

Therefore, we have proved \eqref{Kall1} with $\eta$ of
densityas in Proposition \ref{prop:tk}.
%
%
%
%
\subsubsection{Proof of \eqref{Kall2}.}
In order to ease the notation below, we will prove only the case where $m=1$, $a=0$ and $b=1$. In this special case we can consider the canonical projection of $N_n$ onto its second argument, i.e., onto an empirical measure on $\R\backslash\{0\}$. Let us call this projection $\hat N_n$, and we replace the set $R$ with $\cI$ as defined above Lemma \ref{boundedcov}. 

In the general case, one simply needs to write $R$ in terms of a union of disjoint rectangles, and consider each of rectangles in the union separately; also the projection onto the second coordinate for each of these rectangles must be scaled by the Lebesgue measure of the time interval.

As in \cite[Prop. 3.4]{JZ17} we use Bernstein's block method \cite{bernstein1927extension}.
Condition \eqref{Kall2} would follow if the random variables $\{Z_j\}$ were independent since then we have for $Z_j=\left[(\mathcal R-\tilde\mu(\mathcal R))\sum_{i\in\mathcal J} A_i \mathbf 1_{M_i}\right]\circ F^j$, the splitting into a product measure
$$ \tilde\mu(\hat N_n(\cI)=0)=\prod_{j=1}^n\tilde \mu\left(n^{-1/\aa}Z_j\not\in \cI \right)$$
which easily implies the required convergence.
Since we do not have independence, we instead use asymptotic independence together with Bernstein's block method  to
control the dependence between the $\{Z_j\}$. {Choose $$0<v<w<\theta_0,$$ where $\theta_0$ is as in \eqref{eq:condition D_r(u_n)2}, and for any $n\geq 1$, divide $\{1,\ldots,n\}$ into a sequence of pairs of alternating big intervals (blocks) of length $[n^w]$ and
small blocks of length $[n^v]$. The number of
pairs of big and small blocks is ${B}= [n/([n^v] + [n^w])]$ so that  $\lim_{n\to\ff} B/n^{1-w}=1$. There may be a leftover partial block $L$ in the end which is negligible since
$$\sum_{j\in L} \tmu(n^{-1/\aa}Z_j\in \cI)\le \frac{C n^w}{n}.$$
Thus we may henceforth assume $n=([n^v]+[n^w]){B}$.}

We denote by $\cB_{k}$ and $\cS_{k}$ for $k=1,\cdots, {B}$, the elements of $\{1,\ldots,n\}$ in $k$th big block and small block, respectively.
Let
$$Y_{n,k}=\sum_{j\in \cB_{k}} \bI_{\{n^{-1/\aa}Z_j\in \cI\}},\,\,\,\,\,\,\,V_{n,k}=\sum_{j\in \cS_{k}} \bI_{\{n^{-1/\aa}Z_j\in \cI\}}.$$
For each $n$, both $\{Y_{n,k}\}$ and $\{V_{n,k}\}$ are sequences of identically distributed random variables. Let $S'_n=\sum_{k=1}^{B} Y_{n,k}$ and $S''_n=\sum_{k=1}^{B} V_{n,k}$.

Similar to the argument for $L$, we have
\begin{equation}
\tmu(S''_n) \le C\frac{{B} n^v}{n}\le C \frac{1}{n^{w-v}},
\end{equation}
so that we can ignore small blocks. Thus, it is enough to show that $\tmu(S'_n=0)$ converges to $\exp(-\lambda(\cI))$.
Since big blocks are separated by small blocks, we can peel off one factor at a time in the product $\prod \bI_{\{Y_{n,k}=0\}}$ as follows:
\begin{align*}
\tmu(S'_n=0)&=\tmu\(Y_{n,1}=0,Y_{n,2}=0,\ldots,Y_{n,B}=0\)\\
&=\tmu\(\prod_{k=1}^B\bI_{Y_{n,k}=0}\)\\
&\le\tmu\(\prod_{k=1}^{B-1}\bI_{Y_{n,k}=0}\)\tmu(\bI_{Y_{n,B}=0})+C\theta^{n^v}\\
&\le\(\tmu\(\prod_{k=1}^{B-2}\bI_{Y_{n,k}=0}\)\tmu(\bI_{Y_{n,B-1}=0})+C\theta^{n^v}\)\cdot\tmu(\bI_{Y_{n,B}=0})+C\theta^{n^v}.
\end{align*}
where we used \eqref{eq:condition D_r(u_n)} in the last two lines. Repeating this, we obtain for some $\theta_1\in (\theta,1)$,
\beq\label{Eexpo2}\tmu(S'_n=0)=(\tmu(Y_{n,1}=0))^{B}+\cO(\theta_1^{{n^v}}).\eeq
 It remains to estimate $\tmu(Y_{n,1}=0)$.
\begin{align*}
\tmu(Y_{n,1}=0)
&\le 1-\sum_{j=1}^{n^w}\left(\tmu(n^{-1/\aa}Z_j\in \cI)-\sum_{k=1}^{n^w}\tmu(\{n^{-1/\aa}Z_j\in \cI\}\cap\{n^{-1/\aa}Z_k\in \cI\})\right)\\
&\le 1-\sum_{j=1}^{n^w}\(\tmu(n^{-1/\aa}Z_j\in \cI)-o\(\frac1{n}\)\)\\
&\le 1-n^w\tmu(n^{-1/\aa}Z_1\in \cI)+n^w\,\,\, o\(\frac1{n}\)
\end{align*}
where the second inequality follows from \eqref{eq:condition D_r(u_n)2} since $w<\theta_0$.
An even easier lower bound is given by
\begin{align*}
\tmu(Y_{n,1}=0)
\ge 1-\sum_{j=1}^{n^w}\tmu(n^{-1/\aa}Z_j\in \cI).
\end{align*}

Putting things together we have
$$\tmu(S'_n=0)=\(1-{[n^w]}\tmu(n^{-1/\aa}Z_{1}\in \cI)+o(n^{w-1})\)^{B}+\cO(\theta_1^{n^v}),$$
which is what we need since $B/n^{1-w}\to 1$ and $n\tmu(n^{-1/\aa}Z_{1}\in \cI)\to \lambda(\cI)$.
\subsection{Condition I: alternative approach}
It is enough to prove the convergence of $(N_n)_n$ to $N$ as
a point process on $(0,+\infty)\times (\mathbb R\setminus[-a,a])$
for every $a>0$.
We fix $a>0$ and set $A_\epsilon:=\{|Z_{-1}|>a \epsilon^{-1/\alpha}\}$. Due to Corollary \ref{coroMm2}, $\tilde\mu(A_\epsilon)\sim \epsilon\eta_0(\mathbb R\setminus[-a,a])$ with
$\eta_0 $ the measure on $\mathbb R$ with density $\psi(1,\cdot)$ with respect to the Lebesgue measure. 
The convergence of $(N_n)_n$ to $N$ on $(0,+\infty)\times (\mathbb R\setminus[-a,a])$
follows directly from the following result.
\begin{lemma}\label{LEM0}
For every $a>0$, the family of point processes
$\left(\sum_{j\ge 1\ :\ T^jx\in A_\epsilon} \delta_{\left(j\tilde\mu(A_\epsilon),\epsilon^{1/\alpha}{Z_{j-1}}\right)}\right)_\epsilon$
converges in distribution, as $\varepsilon\rightarrow 0$,
to a Poisson process of intensity $\gamma/\eta_0(\mathbb R\setminus[-a,a])$
with respect to the Lebesgue measure on $(0,+\infty)\times (\mathbb R\setminus[-a,a])$.
\end{lemma}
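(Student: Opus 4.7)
The plan is to verify the two standard hypotheses of Kallenberg's theorem for the claimed Poisson limit on $(0,\infty)\times(\mathbb R\setminus[-a,a])$: intensity convergence and void-probability convergence, checked on finite unions $R=\bigcup_{k=1}^m (a_k,b_k)\times B_k$ with each $B_k$ a bounded finite union of open intervals in $\mathbb R\setminus[-a,a]$. A useful preliminary observation is that $Z_{-1}\circ F^j=Z_{j-1}$, so the event $\{F^j x\in A_\epsilon\}$ coincides with $\{|Z_{j-1}|>a\epsilon^{-1/\alpha}\}$; hence whenever $\epsilon^{1/\alpha} Z_{j-1}$ lies in any $B_k\subset\mathbb R\setminus[-a,a]$, the condition $F^j x\in A_\epsilon$ is automatic, and the two constraints collapse into a single constraint on the mark.

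For the intensity step, I will use $F$-invariance of $\tilde\mu$: the expected number of points of the $\epsilon$-indexed process falling in $(a_k,b_k)\times B_k$ equals $\#\{j\ge 1:\, j\tilde\mu(A_\epsilon)\in(a_k,b_k)\}\cdot\tilde\mu\bigl(\epsilon^{1/\alpha}Z_{-1}\in B_k\bigr)$. The cardinality is $(b_k-a_k)/\tilde\mu(A_\epsilon)+O(1)$, while Corollary \ref{coroMm2} gives $\tilde\mu(\epsilon^{1/\alpha}Z_{-1}\in B_k)\sim \epsilon\,\eta_0(B_k)$ and by assumption $\tilde\mu(A_\epsilon)\sim \epsilon\,\eta_0(\mathbb R\setminus[-a,a])$. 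The ratio converges to $(b_k-a_k)\eta_0(B_k)/\eta_0(\mathbb R\setminus[-a,a])$, and summing over $k$ gives precisely $\gamma(R)/\eta_0(\mathbb R\setminus[-a,a])$, the intensity measure of the Poisson limit.

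For the void-probability step, I will transcribe Bernstein's block method from the proof of \eqref{Kall2}, setting $n:=\lfloor 1/\tilde\mu(A_\epsilon)\rfloor\to\infty$. With the events $D_{\epsilon,j}:=\{\epsilon^{1/\alpha}Z_{j-1}\in B\}$ playing the role of the sets $D_{n,j}$ in Lemma \ref{boundedcov}, I partition the window $(na_1,nb_1)$ into alternating big blocks of length $[n^w]$ and small blocks of length $[n^v]$, with $0<v<w<\theta_0$. The exponential decay bound \eqref{eq:condition D_r(u_n)} peels off one big-block factor at a time with error $O(\theta^{n^v})$; the two-point bound \eqref{eq:condition D_r(u_n)2} shows the probability of two or more hits inside a single big block is $o(n^{w-1})$; and the small blocks contribute a total expected mass of order $n^{v-w}\to 0$. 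Raising the resulting single-block estimate $1-n^w\tilde\mu(D_{\epsilon,1})+o(n^{w-1})$ to the power $B\sim n^{1-w}$ yields $\exp(-(b_1-a_1)\eta_0(B)/\eta_0(\mathbb R\setminus[-a,a]))$, as required. For a general $R$, separating the rectangles $(a_k,b_k)\times B_k$ by the same spacers and iterating delivers the full product of exponentials.

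The main obstacle will be extending Lemma \ref{boundedcov}, stated for the one-sided tail mark set $\cI=I_{c,c'}$, to the present $B\subset\mathbb R\setminus[-a,a]$, which is a finite disjoint union of open intervals. The extension is immediate by decomposing $B$ into its constituent intervals, writing each $D_{\epsilon,j}$ as a finite disjoint union, and summing the bounds over the finitely many interval pairs; nevertheless this step should be written out carefully to ensure the implicit constants remain controlled uniformly in $\epsilon$. A parallel remark applies to Corollary \ref{coroMm2}, which must be applied separately to each interval of $B_k$ and combined; this is legitimate because $\eta_0$ is absolutely continuous, so the boundary of $B_k$ is $\eta_0$-null. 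All remaining steps transcribe almost verbatim from the proof of Condition I.
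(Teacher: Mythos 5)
Your argument is correct, but it takes a genuinely different route from the paper's. The paper proves Lemma \ref{LEM0} by invoking the abstract spatio-temporal Poisson result \cite[Theorem 2.1]{FPBSPoisson2}, which reduces the problem to two ingredients: (a) convergence of the conditional law of the mark $H_\epsilon=\epsilon^{1/\alpha}Z_{-1}$ given $A_\epsilon$ (supplied by Corollary \ref{coroMm2}), and (b) a decorrelation bound $\Delta_{\epsilon,1}=o(\tilde\mu(A_\epsilon))$, which the paper establishes via a bootstrap (the general Lemma \ref{TOTO1} and Corollary \ref{CORO0}) combined with a short-return estimate (Lemma \ref{lem:smallreturntimes}, from \eqref{eq:condition D_r(u_n)2}) and the CZ09 exponential decorrelation (Lemma \ref{lemKall2:decorrelation}). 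You instead verify Kallenberg's two hypotheses directly by re-parametrizing the Bernstein block argument already written out in the paper's first proof of Condition~I (Section 4.4) to the scale $n\approx 1/\tilde\mu(A_\epsilon)$. Both routes rest on the same two pillars — the tail asymptotics in Lemma \ref{Mm2}/Corollary \ref{coroMm2} and the CZ09 decorrelation/two-point bounds in Lemma \ref{boundedcov} — so neither is substantially cheaper; yours is more hands-on and self-contained (no external black box), while the paper's is more modular and avoids a second pass through the block construction, which is arguably the reason for presenting Lemma \ref{LEM0} separately as an "alternative approach" in the first place. Two small things are worth flagging: (i) you should state explicitly that the range $\epsilon\to 0$ corresponds to $n=\lfloor 1/\tilde\mu(A_\epsilon)\rfloor\to\infty$ only as a nondecreasing step function, and that the shift from time index $j\tilde\mu(A_\epsilon)$ to $j/n$ introduces a uniformly vanishing perturbation of the time marks, so the Poisson convergence transfers; (ii) your remark that Lemma \ref{boundedcov} and Corollary \ref{coroMm2} need to be applied interval-by-interval after decomposing $B$ (and $M$ into the pieces $M_i$, $i\in\mathcal J$, since the $M_0$-contribution vanishes once $\epsilon$ is small) is exactly the bookkeeping the paper's framework sidesteps, and it does need to be written out to confirm uniformity of constants.
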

\begin{proof}
We will apply \cite[Theorem 2.1]{FPBSPoisson2} (which uses the Kallenberg theorem) with
$A_\epsilon$ as above, with $H_\eps:A_\epsilon\rightarrow V:= \mathbb R\setminus[-a,a]$
given by $H_\epsilon(x):= \epsilon^{1/\alpha}Z_{-1}(x)$, with
$m:=\frac{\eta_0(\cdot \setminus[-a,a])}{\eta_0(\mathbb R\setminus[-a,a])}$
and with $$\mathcal W:=\{(c,c'); a<c<c'<\infty\}\cup \{(-c',-c); a<c<c'<\infty\}.$$
To apply \cite[Theorem 2.1]{FPBSPoisson2}, we have to prove first that
$\tilde \mu(H_\epsilon^{-1}(\cdot)|A_\epsilon)$
converges in distribution to $m$ (this is ensured by Corollary  \ref{coroMm2}) and second that,  for every $K\ge 1$ and every choice of intervals 
$W_1,...,W_K\in\mathcal W  $, 
\begin{equation}\label{TOTO0}
\Delta_{\epsilon,1}=o(\mu(A_\epsilon))\quad \mbox{(i.e. in }o(\epsilon)\mbox{)},
\end{equation}
with
$$\Delta_{\epsilon,n}:=\sup_{A,B\subset A_\epsilon\, :\, A\in\mathcal G_\epsilon,\, B\in\sigma\left(\bigcup_{j\ge n}F^{-j}(\mathcal G_\epsilon)\right)}\left|\tilde\mu(B\cap A)-\tilde\mu(A)\tilde\mu(B)\right| ,$$
and with $$\mathcal G_\epsilon:=\mathcal G_{\epsilon,W_1,...,W_K}:=\{H_\epsilon^{-1}(W_i); \ i=1,...,K\}.$$
As in the proof of \cite[Proposition 4.2]{FPBSPoisson2}, the fact that $\Delta_{\epsilon,1}=o(\epsilon)$ will follow from the following lemmas.\\
Set $$\tau_{A_\epsilon}:=\min\{n\ge 1\,:\, F^n(\cdot)\in A_\epsilon\}.$$
We will use a general argument given by the next general lemma and its general corollary.
\begin{lemma}[General result]\label{TOTO1}
For any positive integer $p$ and any $\epsilon>0$,
$$\Delta_{\epsilon,1}\le \Delta_{\epsilon,p+1} 
   +\tilde\mu(A_\epsilon)\left(\tilde\mu(\tau_{A_\epsilon}\le p|A_\epsilon)+\tilde\mu(\tau_{A_\epsilon}\le p)\right)\, .$$
\end{lemma}
\begin{proof}
Let $A\in\mathcal G_\epsilon$ and $B\in\sigma\left(\bigcup_{j\ge 1}H_\epsilon^{-1}(\mathcal G_\epsilon)\right)$. Note that there exists a function $g:(\{0,1\}^{K})^{\mathbb N}\rightarrow \{0,1\}$ such that $\mathbf 1_B=g(X_1,...)$, where
$X_i=\left(\mathbf 1_{H_\epsilon^{-1}(W_j))}\right)_{j=1,...,K}\circ F^i$.
Set $C$ such that
\begin{equation}\label{AAA0}
\mathbf 1_C:=g(\mathbf 0,...,\mathbf 0,X_{p+1},...)\in \sigma\left(\bigcup_{j\ge p+1}F^{-j}(\mathcal G_\epsilon)\right)
\end{equation}
and observe that $|\mathbf 1_B-\mathbf 1_C|\le {\mathbf 1}_{\{\tau_{A_\epsilon}\le p\}} $
so that
\begin{eqnarray}
\left|\cov(\mathbf 1_A,\mathbf 1_B)-\cov(\mathbf 1_A,\mathbf 1_C)\right|&\le \tilde\mu(A,\ \tau_{A_\epsilon}\le p)+\tilde\mu(A)\tilde\mu(\tau_{A_\epsilon}\le p)\, .\label{AAA}
\end{eqnarray}
Finally, due to \eqref{AAA0},
$|\cov(\mathbf 1_A,\mathbf 1_C)|\le\Delta_{\epsilon,p_\epsilon+1}\, .$
This combined with \eqref{AAA} and $A\subset A_\epsilon$ ends the proof of the lemma.
\end{proof}
\begin{corollary}[General result when $\mu(A_\epsilon)=\cO(\epsilon)$]\label{CORO0}
If there exists a family of positive integer $(p_\epsilon)_\epsilon$ such that $p_\epsilon=o(\epsilon^{-1})$ and
$\tilde\mu\left(\tau_{ A_\epsilon}\le p_\epsilon| A_\epsilon\right)=o(1)$, then
$$\Delta_{\epsilon,1}\le \Delta_{\epsilon,p_\epsilon+1} 
   +o(\epsilon)\, .$$
\end{corollary}
\begin{proof}
This corollary comes directly from Lemma \ref{TOTO1} combined with $\mu(A_\epsilon)=\cO(\epsilon)$ and
$$ \tilde\mu(\tau_{A_\epsilon}\le p_\epsilon)=\tilde\mu\left(\bigcup_{k=1}^{p_\epsilon}F^{-k}(A_\epsilon)\right)\le\sum_{k=1}^{p_\epsilon}\tilde\mu(F^{-k}(A_\epsilon))=p_\epsilon\tilde\mu(A_\epsilon)=o(1)\, .$$
\end{proof}
We fix $\theta\in(0,\theta_0)$
with $\theta_0=\varepsilon$ as in \eqref{eq:condition D_r(u_n)2} and set $p_\epsilon=\epsilon^{-\theta}$ so that $p_\epsilon\ll \epsilon^{-1}$ and
\begin{lemma}\label{lem:smallreturntimes}
We have
$\tilde\mu\left(\tau_{ A_\epsilon}\le p_\epsilon| A_\epsilon\right)=o(1) \, .$
\end{lemma}
\begin{proof}
Due to \eqref{eq:condition D_r(u_n)2},
$$\tilde\mu\left(\tau_{A_\epsilon}\le p_\epsilon| A_\epsilon\right)\le\sum_{k=1}^{p_\epsilon} \frac{\tilde\mu\left(A_\epsilon\cap F^{-k}A_\epsilon\right)}{\tilde\mu(A_\epsilon)}=O\left(p_\epsilon \frac{\epsilon^{1+\theta_0}}{\epsilon}\right)=O\left(\epsilon^{-\theta+ \theta_0}\right)=o(1)\, .$$
\end{proof}
\begin{lemma}[Decorrelation]\label{lemKall2:decorrelation}
$\Delta_{\epsilon,p_\epsilon+1} =o(\epsilon)\, .$
\end{lemma}
\begin{proof}
Let $A\in\mathcal G_\epsilon$ and $B\in\sigma\left(\bigcup_{j\ge p_\epsilon+1}F^{-j}\mathcal G_\epsilon\right)$.
Observe that $ A\in\mathcal G_\epsilon$ is a finite union of level sets of $\cR\circ F^{-1}$ intersected by some $F(M_i)$. Therefore $A$ can be smoothly foliated by a union of  unstable curves.
Moreover, since $F^{-j}(\mathcal G_\epsilon)$ can be smoothly foliated by  stable curves, the set $B$ can be rewritten 
$B=F^{-p_\epsilon}B'$ with $B'$ smoothly foliated by a union of  stable curves.
Therefore, due to \cite[Theorem 3]{CZ09}, there exists $z\in(0,1)$
and $C_0>0$ such that 
$$\Delta_{\epsilon,p_\epsilon+1} \le C_{0} z^{p_\epsilon}.$$
Note that  $C_0$ does not depend on $A$ and $B$, i.e., it is a uniform constant.
\end{proof}
Corollary \ref{CORO0} combined with Lemmas \ref{lem:smallreturntimes} and \ref{lemKall2:decorrelation} ensures \eqref{TOTO0} (for every $a>0$, for every $K\ge 1$ and every
$W_1,...,W_K\in\mathcal W $) and so Lemma \ref{LEM0}.
\end{proof}

\subsection{Condition II: Vanishing small values}
Recall that
$$Z_i:=\left[\sum_{j\in\mathcal J} A_j (\mathcal R_j-\tilde\mu(\mathcal R_j))\right]\circ F^i.$$
Condition II will follow immediately from \cite[Thm 10.1]{billingsley1999convergence} together with Lemma \ref{cond2lemma} below.
Given $\epsilon\in(0,1)$ and $n\ge 1$, we set
$$U_m(x)=U_{m,\epsilon,n}(x):=\sum_{i=0}^{m-1}\left(V_i(x)-\tilde\mu(V_0)\right),\quad\mbox{with}\quad V_i=V_{i,\epsilon,n}:=\frac{Z_{i}(x)}{n^{1/\alpha}}\cdot\bI_{\{|Z_{i}|/n^{1/\alpha}<\eps\}}.$$ 
We will also set $W_i:=V_i-\tilde\mu(V_i)$.
In order to prove Lemma \ref{cond2lemma}, we need an estimate of the variance of $U_m$.
\begin{lemma}\label{lem:VarUn}
There exists $\tilde C>0$ and $\theta_2>0$ such that,
for every $\epsilon\in(0,1)$ and for all $n$ large enough (more precisely, there exists $ n_\epsilon$ such that for all $n>n_\epsilon$),
$$\sum_{k\ge 0} \left|\cov_{\tilde\mu}(V_0,V_k)\right|
    =\sum_{k\ge 0} \left|{\tilde\mu}(W_0W_k)\right|\le \tilde C\left(\epsilon^{2-\alpha}n^{-1}+\epsilon^{\theta_2}n^{-1-\theta_2}\right)\, .$$
\end{lemma}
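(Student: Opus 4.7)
I would begin by splitting the sum at $k=0$. For the $k=0$ term, I would estimate $\var_{\tilde\mu}(V_0)$ using the tail asymptotic $\tilde\mu(|Z_0|>y)\le Cy^{-\alpha}$ from Corollary~\ref{coroMm2} together with the layer-cake formula: this gives $\tilde\mu(V_0^2)\le C\epsilon^{2-\alpha}/n$, of the correct order. Since $Z_0$ has zero mean, $\tilde\mu(V_0)=-n^{-1/\alpha}\tilde\mu(Z_0\mathbf{1}_{|Z_0|\ge\epsilon n^{1/\alpha}})$ is $O(\epsilon^{1-\alpha}/n)$ by integrating the same tail, so $\tilde\mu(V_0)^2=O(\epsilon^{2-2\alpha}/n^2)$ is subdominant once $n\ge n_\epsilon:=\epsilon^{-\alpha}$, yielding $\var_{\tilde\mu}(V_0)\le C\epsilon^{2-\alpha}/n$.

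For $k\ge 1$ I would decompose $V_i=V_i^{(b)}+V_i^{(t)}$ at a threshold $r\in(0,\epsilon)$ to be chosen, with $V_i^{(b)}:=V_i\mathbf{1}_{|V_i|\le r}$ (bulk, bounded by $r$) and $V_i^{(t)}:=V_i\mathbf{1}_{|V_i|>r}$ (tail, supported on a set of $\tilde\mu$-measure at most $C/(nr^\alpha)$). The covariance then expands into four sub-covariances. For the bulk--bulk piece I would apply exponential decay of correlations to a Lipschitz smoothing of $V_i^{(b)}$, obtaining $|\cov(V_0^{(b)},V_k^{(b)})|\le C\theta^k r^2$. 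For the three sub-covariances involving a tail piece I would use the two-point estimate \eqref{eq:condition D_r(u_n)2} of Lemma~\ref{boundedcov}, which gives $\tilde\mu(\{|V_0|>r\}\cap F^{-k}\{|V_k|>r\})=o(n^{-1-\theta_0})$, together with $\|V_i\|_\infty\le\epsilon$ and Cauchy--Schwarz for the mixed terms.

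Summing over $k\ge 1$, the bulk--bulk contribution totals $Cr^2/(1-\theta)$ and the tail-involving contributions total $o(\epsilon^2 n^{-\theta_0})$ for $k\le n$, with an exponentially smaller contribution from $k>n$. Choosing $r=\epsilon^{(2-\alpha)/2}/\sqrt{n}$ gives $r^2=\epsilon^{2-\alpha}/n$, folding the bulk--bulk part into the main term, while the remaining tail-involving contributions produce the second error of the form $\epsilon^{\theta_2}/n^{1+\theta_2}$ with a suitable $\theta_2\in(0,\theta_0)$ (depending on $\alpha$ and $\theta_0$).

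The hardest step will be the bulk--bulk decay-of-correlations bound: $V_i^{(b)}$ has a discontinuity at $|V_i|=r$ and depends on the unbounded return time $\mathcal R$, so the standard Hölder decay estimates cannot be invoked directly. Replacing the sharp cutoff by a Lipschitz one at scale $\delta_r$ introduces a Hölder norm of order $1/\delta_r$ and a boundary-layer error on a set of measure $O(\delta_r/(nr^{\alpha+1}))$; choosing $\delta_r$ small enough to resolve the bulk/tail split but large enough that the Hölder cost is absorbed in the $\theta^k$ factor requires a careful optimization, and this balancing is the technical heart of the proof.
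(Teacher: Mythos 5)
Your $k=0$ estimate matches the paper's: both use the tail asymptotic from Lemma~\ref{Mm2}/Corollary~\ref{coroMm2} with the layer-cake formula to get $\tilde\mu(V_0^2)\lesssim \epsilon^{2-\alpha}n^{-1}$. From $k\ge 1$ onward, however, your plan diverges from the paper's and contains a genuine gap.

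The core problem is that the sub-covariances involving your tail piece $V^{(t)}$ are not given any decay in $k$, and yet you sum them over $k\le n$. The two-point estimate \eqref{eq:condition D_r(u_n)2} gives a bound of order $o(n^{-1-\theta_0})$ \emph{per} $k$, uniformly in $k$; summing that over $k=1,\dots,n$ yields $o(n^{-\theta_0})$, which you correctly record as the total ``$o(\epsilon^2 n^{-\theta_0})$'', but then you claim this produces the error $\epsilon^{\theta_2}n^{-1-\theta_2}$. Those are off by a full power of $n$, and there is no way to fold $n^{-\theta_0}$ into $n^{-1-\theta_2}$. The mixed bulk--tail terms are actually worse: Cauchy--Schwarz gives roughly $r\cdot\epsilon\sqrt{C/(nr^\alpha)}$ per $k$, and summing $n$ of these with $r=\epsilon^{(2-\alpha)/2}n^{-1/2}$ yields a quantity that grows in $n$. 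So as written, your sum over $k\ge 1$ does not close.

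The missing structural idea is a log-cutoff in $k$. The paper splits the sum into $k\ge k_0\approx 2\log n/|\log\theta_1|$, where the decay-of-correlations estimate $|\tilde\mu(W_0W_k)|\le C\epsilon^2\theta_1^k$ from \cite[Theorem 3]{CZ09} sums to $O(\epsilon^2 n^{-2})$, and into the $O(\log n)$ values $1\le k<k_0$, where a per-$k$ bound of order $\epsilon^{\theta'_0}n^{-1-\theta'_0}$ (obtained via a linear truncation $|W_k|\le \epsilon n^{\gamma-1/\alpha}+\max(|W_k|-\epsilon n^{\gamma-1/\alpha},0)$ with $\gamma\in(0,2/\alpha-1)$, combined with \eqref{eq:condition D_r(u_n)2} and Lemma~\ref{Mm2} on a double integral over the joint tails) is multiplied by only $\log n$, which is absorbed into a slightly smaller $\theta_2$. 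Without a cutoff of this kind, no per-$k$ bound that is merely $O(n^{-1-\theta_0})$ can survive an unrestricted sum over $k\le n$.

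Finally, the ``hardest step'' you flag — Lipschitz smoothing of the bulk piece to obtain $\theta^k$ decay — is not actually needed with the paper's machinery: the exponential correlation bound from \cite[Theorem 3]{CZ09} (cf.\ Lemma~\ref{boundedcov}) is already formulated for the relevant class of observables with discontinuities along level sets of $\mathcal R$, so the paper invokes it directly on $W_0,W_k$ without any regularization. If you insist on the bulk/tail decomposition, you would still need to import that theorem rather than a standard H\"older decay bound, and more importantly you would still need the log-cutoff to control the tail-involving terms.
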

\begin{proof}
Due to \cite[Theorem 3]{CZ09},
\beq\label{bigk}
 \left|{\tilde\mu}(W_0W_k)\right|\le C \epsilon^2\theta_1^k\, .
\eeq
This estimate will be useful for big $k$ ($k\ge 2\log n/|\log \theta_1|$).
For $k=0$, due to Lemma \ref{Mm2}, there exists $c'>0$ such that
\beq\label{eqk=0}
{\tilde\mu}(W_0^2)
\le
\, \tilde\mu\left(V_0^2\right)
=\int_0^{\epsilon^{ 2}}\tilde\mu\left((Z_0/n^{\frac 1\alpha})^2>t\right)\, dt
\le c'\int_0^{\epsilon^2}t^{-\frac\alpha 2}n^{-1}\, dt= c'\frac{\epsilon^{2-\alpha }n^{-1}}{1-\frac \alpha 2}\, .
\eeq
Fix $\gamma\in(0,\frac 2\alpha-1)$.
Noticing that $$|W_k|\le \epsilon n^{\gamma-\frac 1\alpha} +\max\left(|W_k|-\epsilon n^{\gamma-\frac 1\alpha},0\right)$$ 
we obtain that
\begin{align}
 \left|{\tilde\mu}(W_0W_k)\right|
&\le \tilde\mu\left(\left|( \epsilon n^{\gamma-\frac 1\alpha}+|W_0|- \epsilon n^{\gamma-\frac 1\alpha})W_k\right|\right)\nonumber\\
&\le  \epsilon n^{\gamma-\frac 1\alpha} \tilde \mu\left(|W_k|\right)+ \tilde\mu\left(\max\left(|W_0|-\epsilon n^{\gamma-\frac 1\alpha},0\right)\(\max\left(|W_k|-\epsilon n^{\gamma-\frac 1\alpha},0\right)+\epsilon n^{\gamma-\frac 1\alpha}\)\right)\nn\\ 
&\le  2 \epsilon n^{\gamma-\frac 1\alpha} \tilde \mu\left(|W_0|\right)+ \tilde\mu\left(\max\left(|W_0|-\epsilon n^{\gamma-\frac 1\alpha},0\right)\max\left(|W_k|-\epsilon n^{\gamma-\frac 1\alpha},0\right)\right)\, .\label{smallk_1}
\end{align}
Due to the definition of $W_j$, $V_j$ and $Z_j$, we have
\beq\label{smallk_2}
\epsilon n^{\gamma-\frac 1\alpha} \tilde \mu\left(|W_0|\right)
\le 2\epsilon n^{\gamma-\frac 1\alpha} \tilde \mu\left(|V_0|\right)\le  4 \epsilon n^{-\frac 2\alpha+\gamma} \,K_0\, \tilde\mu\left(\mathcal R\right)\, ,
\eeq
where $K_0:= \max_{i\in \cJ}|A_i|$. 
Moreover,
\begin{eqnarray*}
&\ &
\tilde\mu\left(\max\left(|W_0|-\epsilon n^{\gamma-\frac 1\alpha},0\right)\max\left(|W_k|-\epsilon n^{\gamma-\frac 1\alpha},0\right)\right)\nonumber\\
&= &
\int_{(\epsilon n^{\gamma-\frac 1\alpha},3\epsilon)^2} \tilde\mu\left(|W_0|>r,\ |W_k|>s\right)\, dr\, ds\nonumber\\
&\le &
\int_{(\epsilon n^{\gamma-\frac 1\alpha},3\epsilon)^2} \tilde\mu\left(|V_0|>r-\frac{2\, K_0\tilde\mu(\mathcal R)}{n^{\frac 1\alpha}},\ |V_k|>s-\frac{2\, K_0\tilde\mu(\mathcal R)}{n^{\frac 1\alpha}}\right)\, dr\, ds\, ,
\end{eqnarray*}
since $\left|\tilde\mu(V_0)\right|\le \frac{2\, K_0\tilde\mu(\mathcal R)}{n^{\frac 1\alpha}}$.
Assuming $\epsilon$ and $n$ are such that
$\frac{2\, K_0\tilde\mu(\mathcal R)}{n^{\frac 1\alpha}}<\frac{\epsilon n^{\gamma-\frac 1\alpha}}2$, we get
\begin{eqnarray*}
&\ &
\tilde\mu\left(\max\left(|W_0|-\epsilon n^{\gamma-\frac 1\alpha},0\right)\max\left(|W_k|-\epsilon n^{\gamma-\frac 1\alpha},0\right)\right)\nonumber\\
&\le&\int_{(\epsilon n^{\gamma-\frac 1\alpha},3\epsilon)^2} \tilde\mu\left(|V_0|>r/2,\ |V_k|>s/2\right)\, dr\, ds\\
&\le &
\int_{(\epsilon n^{\gamma-\frac 1\alpha},3\epsilon)^2}\tilde\mu\left(
\mathcal R+\tilde\mu(\mathcal R)
>n^{\frac 1\alpha}r/(2K_0),\ \mathcal R\circ F^k+\tilde\mu(\mathcal R)>n^{\frac 1\alpha}s/(2K_0)\right)\, dr\, ds\, .\nonumber
\\
&\le &
\int_{(\epsilon n^{\gamma-\frac 1\alpha},3\epsilon)^2}\tilde\mu\left(
\mathcal R
>n^{\frac 1\alpha}r/(4K_0),\ \mathcal R\circ F^k>n^{\frac 1\alpha}s/(4K_0)\right)\, dr\, ds\, .\nonumber
\end{eqnarray*}
Now, using the inequality
$$ \tilde\mu\left({\mathcal R}>a,\ {\mathcal R}\circ F^k>b\right)\le\min\left(\tilde\mu\left({\mathcal R}>\min(a,b),\ {\mathcal R}\circ F^k>\min(a,b)\right),\tilde\mu\left({\mathcal R}>\max(a,b)\right)\right)\, ,$$
we obtain the existence of
$K'_0>0$ such that, for $\epsilon$ and $n$ as above and for every $k\in[1, (\epsilon n^\gamma)^\alpha]$,
\begin{eqnarray*}
&\ &
\tilde\mu\left(\max\left(|W_0|-\epsilon n^{\gamma-\frac 1\alpha},0\right)\max\left(|W_k|-\epsilon n^{\gamma-\frac 1\alpha},0\right)\right)\nonumber\\
&\le &
\int_{(\epsilon n^{\gamma-\frac 1\alpha},\epsilon )^2} K'_0\min\left(\left(\min(r,s)\right)^{-\alpha(1+\theta_0)}n^{-(1+\theta_0)}, (n\max(r,s)^\alpha)^{-1}\right)\, dr\, ds\, ,\label{smallk_3_step1}
\end{eqnarray*}
using \eqref{eq:condition D_r(u_n)2} (applied with $n$ of \eqref{eq:condition D_r(u_n)2} equal to $\lfloor n(\min(r,s))^\alpha\rfloor\ge \lfloor (\epsilon n^\gamma)^\alpha\rfloor\ge k$ and $\mathcal I=I_{c,c'}$ with $c=c'=1/(2K_0)$) for the first term in the last line and Lemma \ref{Mm2} for the second term in the last line. 
Therefore
\begin{eqnarray*}
&\ &\tilde\mu\left(\max\left(|W_0|-\epsilon n^{\gamma-\frac 1\alpha},0\right)\max\left(|W_k|-\epsilon n^{\gamma-\frac 1\alpha},0\right)\right)\\
&\le& 2K'_0\int_{\epsilon n^{\gamma-\frac 1\alpha}<r<s<\epsilon } \min\left((nr^\alpha)^{-1-\theta_0}, (ns^\alpha)^{-1}\right)\, dr\, ds\, .
\end{eqnarray*}
We assume from now on without loss of generality (up to restricting the value of $\theta_0$ if necessary) that
$(\alpha-1)(1+\theta_0)<1$.\\
Observe that $(nr^\alpha)^{-1-\theta_0}<(ns^\alpha)^{-1}$ happens if and only if $ s<n^{\frac{\theta_0}\alpha}r^{1+\theta_0}$ and that
$(ns^\alpha)^{-1}<(nr^\alpha)^{-1-\theta_0}$ happens if and only if $ r<n^{-\frac{\theta_0}{\alpha(1+\theta_0)}}s^{\frac 1{1+\theta_0}}$, which leads to
\begin{eqnarray}
&\ &
\tilde\mu\left(\max\left(|W_0|-\epsilon n^{\gamma-\frac 1\alpha},0\right)\max\left(|W_k|-\epsilon n^{\gamma-\frac 1\alpha},0\right)\right)\nonumber
\\
&\le& 2K'_0\left(\int_0^\epsilon  (nr^\alpha)^{-1-\theta_0}n^{\frac {\theta_0}\alpha}r^{1+\theta_0}\, dr+\int_0^\epsilon (ns^\alpha)^{-1}n^{-\frac{\theta_0}{\alpha(1+\theta_0)}}s^{\frac 1{1+\theta_0}}\, ds\right)\nonumber\\
&\le& 2K'_0\left(n^{-1-\theta_0+\frac {\theta_0}\alpha}\int_0^\epsilon  r^{-(\alpha-1)(1+\theta_0)}\, dr+n^{-1-\frac{\theta_0}{\alpha(1+\theta_0)}}\int_0^\epsilon s^{\frac 1{1+\theta_0}-\alpha}\, ds\right)\nonumber\\
&=&
\cO\left(\epsilon^{1-(\alpha-1)(1+\theta_0)}
n^{-1-\theta_0+\frac {\theta_0}\alpha}+\epsilon^{\frac
{1-(\alpha-1)(1+\theta_0)}
{1+\theta_0}}n^{-1-\frac{\theta_0}{\alpha(1+\theta_0)}}\right)=\cO\left(\epsilon^{\theta'_0}n^{-1-\theta'_0}\right)\, ,\label{smallk_3}
\end{eqnarray}
with $\theta'_0>0$ . Fix $\epsilon>0$ and consider $n$ large enough so that $\frac{K_0\tilde\mu(\mathcal R)}{n^{\frac 1\alpha}}<\frac{\epsilon n^{\gamma-\frac 1\alpha}}2$ and 
$2\log n/|\log \theta_1|\le (\epsilon n^\gamma)^\alpha$.
Putting together estimate \eqref{bigk} (for the sum over $k\ge 2\log n/|\log \theta_1|$), combined with  
\eqref{eqk=0}, \eqref{smallk_1}, \eqref{smallk_2} and \eqref{smallk_3} (for the sum over $k<2\log n/|\log \theta_1|$), we obtain that there exists $\tilde K>0$ such that for every $\epsilon>0$, for every $n$ large enough,
\beq
\sum_{k\ge 0} \left|\cov_{\tilde\mu}(V_0,V_k)\right|\le\tilde K
 \left( \epsilon^2 n^{-2}+\epsilon^{2-\alpha}n^{-1}
+\log n \left({\epsilon n^{-\frac 2\alpha+\gamma}}+\epsilon^{\theta'_0}n^{-1-\theta'_0}\right)\right)\, ,
\eeq
which ends the proof of the lemma.
\end{proof}
Armed with the above preliminary result, we can now prove a lemma which
	combined with \cite[Eq. (10.12) and Thm 10.1]{billingsley1999convergence} will directly implies Condition II.
\begin{lemma}\label{cond2lemma}
There exists $\tilde C_0>0$ and $\tilde\theta,\tilde\theta_0>0$ such that, for every $\epsilon\in(0,1)$ and for every $n$ large enough  (more precisely, there exists $ n_\epsilon$ such that for all $n>n_\epsilon$), and for every $0<{m_1< m_2}\le n$, we have
$$\tilde\mu \left(U_{m_1}^2({U_{m_2}-U_{m_1}})^2\right)\le \tilde C_0 \left(\frac{m_2}n\right)^{1+\tilde\theta}\left(\epsilon^{4-2\alpha}+\epsilon^{\tilde\theta_0}n^{-\tilde\theta_0}\right)\, .$$
\end{lemma}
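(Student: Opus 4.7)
The plan is to decompose the 2-2 mixed moment into a near-independent ``main term'' plus a covariance error, then to control each piece using Lemma~\ref{lem:VarUn} together with the exponential decay of correlations from \cite[Theorem 3]{CZ09}. Throughout, set $B:=\tilde C(\epsilon^{2-\alpha}n^{-1}+\epsilon^{\theta_2}n^{-1-\theta_2})$ so that, by Lemma~\ref{lem:VarUn}, $\var_{\tilde\mu}(U_m)\le 2mB$ for every $m\le n$.

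First I would write
$$\tilde\mu(U_{m_1}^2(U_{m_2}-U_{m_1})^2)=\tilde\mu(U_{m_1}^2)\tilde\mu((U_{m_2}-U_{m_1})^2)+\cov_{\tilde\mu}\bigl(U_{m_1}^2,(U_{m_2}-U_{m_1})^2\bigr).$$
The first (independent-looking) term is immediately controlled by the variance bound: it is at most $4m_1(m_2-m_1)B^2\le m_2^2 B^2$. Expanding $B^2$ and using $m_2\le n$, this gives a contribution of order $(m_2/n)^2\epsilon^{4-2\alpha}+(m_2/n)^2\epsilon^{2\theta_2}n^{-2\theta_2}$, which fits the right-hand side of the lemma with $\tilde\theta\le 1$ and $\tilde\theta_0\le 2\theta_2$, since $(m_2/n)^2\le (m_2/n)^{1+\tilde\theta}$ whenever $\tilde\theta\le 1$ and $m_2\le n$.

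Next I would treat the covariance term by a gap/block argument. Choose a buffer size $g=\lceil K\log n\rceil$, with $K$ sufficiently large that $\theta^g\le n^{-K}$ (for $\theta$ the mixing rate from \cite[Theorem 3]{CZ09}), and split
$$U_{m_2}-U_{m_1}=Y_{\mathrm{buf}}+Y',\qquad Y_{\mathrm{buf}}:=\sum_{i=m_1}^{m_1+g-1}W_i,\ \ Y':=\sum_{i=m_1+g}^{m_2-1}W_i.$$
By $(a+b)^2\le 2a^2+2b^2$, it suffices to bound $\tilde\mu(U_{m_1}^2 (Y')^2)$ and $\tilde\mu(U_{m_1}^2 Y_{\mathrm{buf}}^2)$ separately. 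For the first, $U_{m_1}^2$ and $(Y')^2$ are observables supported on blocks of coordinates separated by $g$ iterates of $F$; since each $W_i$ is a bounded function depending on level sets of $\mathcal R\circ F^i$, the products $U_{m_1}^2$ and $(Y')^2$ can be approximated by observables that are piecewise smooth along stable/unstable foliations, and \cite[Theorem 3]{CZ09} gives
$$\bigl|\cov_{\tilde\mu}\bigl(U_{m_1}^2,(Y')^2\bigr)\bigr|\le C\theta^g\, \|U_{m_1}^2\|_\bullet\, \|(Y')^2\|_\bullet\le Cn^{-K}m_2^4\epsilon^4,$$
which, with $K$ chosen large enough, is absorbed by the main term.

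Finally the buffer contribution: by Cauchy--Schwarz,
$$\tilde\mu(U_{m_1}^2 Y_{\mathrm{buf}}^2)\le \bigl(\tilde\mu(U_{m_1}^4)\bigr)^{1/2}\bigl(\tilde\mu(Y_{\mathrm{buf}}^4)\bigr)^{1/2}.$$
For the fourth-moment factors I would use a Rosenthal-type inequality for the exponentially mixing sequence $(W_i)$, combined with $|W_i|\le 2\epsilon$ and the variance bound of Lemma~\ref{lem:VarUn}, to obtain $\tilde\mu(U_m^4)\le C(mB)^2+Cm\tilde\mu(W^4)$, where $\tilde\mu(W^4)\le 4\epsilon^2\tilde\mu(W^2)\le 4\epsilon^2 B$. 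Inserting this with $m=m_1$ and $m=g=O(\log n)$, the buffer term is bounded by $C(\log n)^{1/2}m_1^{1/2}B^{3/2}\epsilon\sqrt{\log n}$ (plus lower-order), which after substituting the explicit form of $B$ produces a contribution of the form $(m_2/n)^{1+\tilde\theta}(\epsilon^{\tilde\theta_0}n^{-\tilde\theta_0}\epsilon^{4-2\alpha})$ for suitable positive constants, after possibly shrinking $\tilde\theta$ and $\tilde\theta_0$. Combining the three contributions yields the claimed estimate.

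The main obstacle I anticipate is the buffer term: using only $|Y_{\mathrm{buf}}|\le 2g\epsilon$ together with the variance bound on $U_{m_1}$ is too crude (it produces a stray $g^2\epsilon^2$ factor that can dominate when $g\sim\log n$), so the Rosenthal-type fourth-moment bound, which exploits the strong integrability coming from the truncation $\mathbf 1_{|Z_i|<\epsilon n^{1/\alpha}}$, is essential. Verifying the precise form of Rosenthal's inequality in the setting of the induced billiard map (where $(W_i)$ is stationary under $F$ and enjoys exponential decay of correlations for sufficiently regular observables) is the step that will require the most care; once it is in hand, the remaining calculation is an accounting of the exponents of $\epsilon$ and $n$.
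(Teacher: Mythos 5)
Your plan takes a genuinely different route from the paper's. The paper expands the fourth moment $\tilde\mu(U_{m_1}^2(U_{m_2}-U_{m_1})^2)$ directly into a sum of four-point correlations $\tilde\mu(\prod_{j=1}^4 W_{k_j})$ over $0\le k_1\le k_2<m_1\le k_3\le k_4<m_2$, and then splits this sum according to the largest gap among the four indices: when the largest gap exceeds $k\approx 3\log n/|\log\theta_1|$, it factors at that gap using \cite[Theorem 3]{CZ09} (this reproduces your ``product term'' and an exponentially small remainder $\epsilon^4\sum_{\ell\ge k}\ell^4\theta_1^\ell$); when all gaps are at most $k$, the quadruple lives in a set $\mathcal E_k$ of size $\cO((\log n)^4)$ (pinned near $m_1$ because $k_2<m_1\le k_3$ forces $k_3-k_2\ge 1$), and each such term is controlled by the refined two-point estimate developed inside Lemma \ref{lem:VarUn}. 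Your decomposition (product $+$ covariance, buffer, Cauchy--Schwarz, Rosenthal) is a reasonable alternative organization, and the exponent accounting does close, but it is not self-contained.

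The genuine gap is exactly the Rosenthal-type inequality $\tilde\mu(U_m^4)\le C(mB)^2+Cm\,\tilde\mu(W^4)$ that you invoke but do not prove. You correctly identify that the crude bound $|Y_{\mathrm{buf}}|\le 2g\epsilon$ is insufficient, and this fourth-moment bound is what replaces it; but proving such an inequality for the specific $n$-dependent truncated observables $W_i$ (which are piecewise constant along level sets of $\cR\circ F^i$, not H\"older) amounts to precisely the direct expansion-and-gap argument the paper carries out, together with the refined two-point estimates from the proof of Lemma~\ref{lem:VarUn} for the ``no large gap'' configurations. So the Rosenthal step cannot be treated as an off-the-shelf black box here: it is the core of the argument. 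Two smaller points: (i) your initial product$+$covariance split is made redundant by the subsequent $(a+b)^2\le 2a^2+2b^2$ step, which already bounds the whole quantity $\tilde\mu(U_{m_1}^2(U_{m_2}-U_{m_1})^2)$, not just the covariance; (ii) the decorrelation bound $|\cov(U_{m_1}^2,(Y')^2)|\le C\theta^g\|U_{m_1}^2\|_\bullet\|(Y')^2\|_\bullet$ cannot be applied literally, since $U_{m_1}^2$ and $(Y')^2$ are sums of products of indicators and do not lie in the class of observables covered by \cite[Theorem 3]{CZ09}; one must apply decorrelation termwise to each $\cov(W_{k_1}W_{k_2},W_{k_3}W_{k_4})$ and sum, which does give the $\cO(m_2^4\epsilon^4\theta^g)$ bound you want but is worth stating explicitly.
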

\begin{proof}
We have
\begin{eqnarray}\label{eq:425}
\nn\tilde\mu\left(U_{m_1}^2(U_{m_2}-U_{m_1})^2\right)&=&\sum_{k_1,k_2=0}^{m_1-1}\sum_{k_3,k_4=m_1}^{m_2-1}
\tilde\mu\left(\prod_{j=1}^4W_{k_j}\right)\\
&\le& 4\, \sum_{0 \le k_1\le k_2<m_1< k_3\le k_4\le m_2-1}
\left|\tilde\mu\left(\prod_{j=1}^4W_{k_j}\right)\right|
\, . \label{varUn}
\end{eqnarray}
But, due to 
\cite[Theorem 3]{CZ09},
for every $j_0\in\{1,2,3\}$,
\beq\label{eq:426}
\left|\tilde\mu\left(\prod_{j=1}^4W_{k_j}\right)\right|
\le \left|\tilde\mu\left(\prod_{j\le j_0}W_{k_j}
\right)
\tilde\mu\left(\prod_{j> j_0}W_{k_j}\right)\right|
+ C'\, \epsilon^4 \theta_1^{k_{j_0+1}-k_{j_0}}\, .
\eeq
Notice that the first part of the right-hand side of \eqref{eq:426}
vanishes unless $j_0=2$. We apply \eqref{eq:426} to the sum on the right side of \eqref{eq:425}, by splitting up this sum according to the largest of the three gaps between the four indices $k_1\le k_2<k_3\le k_4$. We apply \eqref{eq:426} with $j_0=1$ (respectively 2 or 3) when the largest gap occurs between $k_1$ and $k_2$ (respectively $k_2$ and $k_3$ or $k_3$ and $k_4$), 
providing this gap is larger than or equal to $k$.
We conclude that, for every $k\ge 1$,
\begin{align*}
&\tilde\mu\left(U_{m_1}^2(U_{m_2}-U_{m_1})^2\right)\\&\le 4  \left(\sum_{\mathcal E_k}
{\tilde\mu}\left(\lb\prod_{j=1}^4 W_{k_j}\rb\right)+\left(\sum_{0\le k_1\le k_2<m_2}\left|\tilde\mu\left(\prod_{j=1}^2W_{k_j}\right)\right|\right)^2+  C'\, \epsilon^4 \, \sum_{\ell\ge k}\ell^4\theta_1^{\ell}\right)\, ,\nonumber
\end{align*}
where $\mathcal E_k$ is the set of $(k_1,k_2,k_3,k_4)$ satisfying
$0\le k_1\le k_2<m_1\le  k_3\le k_4< m_2$ such that $$\max(k_2-k_1,k_3-k_2,k_4-k_3)\le k.$$ 
Here $\ell$ in the final sum represents the largest gap, i.e., it is $\max(k_2-k_1,k_3-k_2,k_4-k_3)$, and the coefficient $\ell^4$ in the final sum comes from varying the $k_i$ subject to the constraint $0\le k_1\le k_2<m_1\le  k_3\le k_4< m_2$.

Now, Lemma \ref{lem:VarUn} ensures that, for every $\epsilon>0$, there exists $n_\epsilon$ such that, for every 
$n\ge n_\epsilon$ and every $0\le m_1<m_2$, we have 
\beq
\tilde\mu\left(U_{m_1}^2(U_{m_2}-U_{m_1})^2\right)\le 4 \sum_{\mathcal E_k}
{\tilde\mu}\left(\lb\prod_{j=1}^4 W_{k_j}\rb\right)+C''(m_2)^2(\epsilon^{4-2\alpha}n^{-2}+\epsilon^{2\theta_0}n^{-2-2\theta_2}) 
+ 4 C'\, \epsilon^4 \, \sum_{\ell\ge k}\ell^4\theta_1^{\ell}\, .
\label{bigkbis}
\eeq
Fix $\gamma\in(0,\frac 2\alpha-1)$.
Due to \eqref{smallk_1}, \eqref{smallk_2} and \eqref{smallk_3},  
there exists
$K'_0>0$ and $\theta'_0>0$ such that, for every $\epsilon$ and $n$
satisfying $K_0\tilde\mu(\mathcal R)<\frac{\epsilon n^{\gamma}}2$, for every $(k_1,k_2,k_3,k_4)$  such that $\max_{j=1,2,3}(k_{j+1}-k_{j})\in[1, (\epsilon n^\gamma)^\alpha]$,
\beq
\tilde\mu\left(\left|\prod_{j=1}^4W_{k_j}\right|\right)\le
4\epsilon^2 \tilde\mu\left(\left|W_{k_1}W_{k_2}\right|\right)\le K'_0 
\epsilon^{\theta'_0}n^{-1-\theta'_0}\, .
\label{smallkbis}
\eeq
Fix $\epsilon>0$ and consider $n$ large enough so that
$K_0\tilde\mu(\mathcal R)<\frac{\epsilon n^{\gamma}}2$ and such that 
$k=3\log n/|\log \theta_1|\le (\epsilon n^\gamma)^\alpha$.
Putting together \eqref{bigkbis} with \eqref{smallkbis}, we obtain that
\begin{eqnarray*}
\tilde\mu\left(U_{m_1}^2(U_{m_2}-U_{m_1})^2\right)&\le& C''' \left(\epsilon^ 4\, n^{-2}+\frac{m_2^2}{n^2}(\epsilon^{4-2\alpha}+n^{-2\theta_2})+(\log n)^4 n^{-1-\theta'_0}\right)\\
&\le& C''' \left(\epsilon^ 4\, \left(\frac {m_2}n\right)^{2}+\left(\frac {m_2}n\right)^{2}(\epsilon^{4-2\alpha}+n^{-2\theta_2})+(\log n)^4 \left(\frac{m_2}{n}\right)^{1+\tilde\theta}n^{-\frac{\theta'_0}2}\right)
\end{eqnarray*}
with $\tilde\theta:=\min(1,\frac{\theta'_0}2)$ since
$n^{-1-\theta'_0}\le  \frac{m_2^{1+\tilde\theta}}{n^{1+\theta'_0}}\le \left(\frac{m_2}{n}\right)^{1+\tilde\theta}n^{-\frac{\theta'_0}2}$.
This ends the proof of Lemma \ref{cond2lemma}.
\end{proof}
\begin{proof}[Proof of Condition II]
Due to the Markov inequality and to Lemma \ref{cond2lemma}, for every $0\le i\le j\le k\le n$,
\begin{eqnarray*}
\tilde\mu\left(|U_j-U_i|\wedge|U_k-U_j|\ge\lambda\right)&\le&\lambda^{-4}
\mathbb E_{\tilde\mu}\left[|U_j-U_i|^4\wedge|U_k-U_j|^4\right]\\
&\le& \lambda^{-4}
\mathbb E_{\tilde\mu}\left[|U_j-U_i|^2|U_k-U_j|^2\right]\\
&\le& \lambda^{-4}
\mathbb E_{\tilde\mu}\left[|U_{j-i}|^2|U_{k-i}-U_{j-i}|^2\right]\\
&\le& \lambda^{-4}\tilde C_0 \left(\frac{k-i}n\right)^{1+\tilde\theta}\left(\epsilon^{4-2\alpha}+\epsilon^{\tilde\theta_0}n^{-\tilde\theta_0}\right)\, .
\end{eqnarray*}
Therefore, due to Theorem \cite[Theorem 10.1]{billingsley1999convergence} (with $\beta=1$ and $\alpha=(1+\tilde\theta)/2$ and $u_\ell=n^{-1}(\tilde C_0(\epsilon^{4-2\alpha}+n^{-\tilde\theta_0}))^{1/(1+\tilde\theta)}$), there exists $K>0$ depending only on $(\alpha,\beta)$ such that
\beq
\tilde\mu\left(L_n\ge \lambda\right)\\
\le \lambda^{-4}K\tilde C_0 \left(\epsilon^{4-2\alpha}+\epsilon^{\tilde\theta_0}n^{-\tilde\theta_0}\right)\, ,
\eeq
with $L_n:=\max_{0\le i\le j\le k\le n}(|U_j-U_i|\wedge|U_k-U_j|)$.
As noticed in \cite[(10.3)]{billingsley1999convergence},
$$\max_{k\le n}|U_k|\le  3L_n+\max_{k\le n}|V_k-\tilde\mu(V_0)|\, .$$
Therefore, for every $\gamma>0$,
\begin{eqnarray*}
\tilde\mu\left(\max_{0\le k\le n}|U_k|\ge \gamma\right)&\le&\tilde\mu\left(L_n\ge \frac{\gamma}6\right)+\tilde\mu\left(\max_{k\le n}|V_k-\tilde\mu(V_0)|\ge \frac\gamma 2\right)\\
&\le& K\, 6^4\gamma^{-4}\tilde C_0 \left(\epsilon^{4-2\alpha}+\epsilon^{\tilde\theta_0}n^{-\tilde\theta_0}\right)+  \tilde\mu\left(\max_{k\le n}|V_k|\ge \frac{\gamma}2-\tilde\mu(V_0)\right)\\
&\le& K\, 6^4\gamma^{-4}\tilde C_0 \left(\epsilon^{4-2\alpha}+\epsilon^{\tilde\theta_0}n^{-\tilde\theta_0}\right)+  \tilde\mu\left(\epsilon \ge \frac{\gamma}2-\frac{\tilde\mu(|Z_0|)}{n^{\frac 1\alpha}}\right)\, .
\end{eqnarray*}
Consequently, for every $\gamma>0$,
\beq
\lim_{\varepsilon\rightarrow 0}\limsup_{n\rightarrow +\infty}\tilde\mu\left(\max_{0\le k\le n}|U_k|>\gamma\right)
\le\lim_{\varepsilon\rightarrow 0}\left[K 6^4\gamma^{-4}\tilde C_0 \epsilon^{4-2\alpha}+  \tilde\mu\left(\epsilon \ge 
\frac{\gamma}2\right)\right]=0\, .
\eeq
\end{proof}

\begin{appendix}
\section{Asymmetric cusps with  flatness $\beta$}
In this section, we concentrate on one  cusp $P_i$ with flatness $\beta$ for $i\in \cJ$. 
To simplify notation, we ignore the index $i$, and consider here the cusp formed by $\Gamma, \Gamma'$ with a common tangent line at the end point $P$.  
{To simplify notation, we assume that the flat point $P$ has curvilinear abscissa $r=0$ and $r'=|\partial Q|$. }
We choose a Cartesian coordinate system $(s,z)$ with origin at $P$, and with the horizontal $s$-axis being the tangent line to the boundary of the billiard table. By \eqref{z1s},     for some small $\eps_0>0$,  the boundary pair $\Gamma$ and $\Gamma'$ adjacent to $P$ can be represented in the $\eps_0-$neighborhood of the cusp $P$ as:
$\{(s,z_0(s)),\, s\in[0,\eps_0]\}\cup \{(s,-z_1(s)),\, s\in[0,\eps_0]\}$, with
 \beq\label{zsP}
 z_i(s)= c_i\beta^{-1} s^{\beta}+\cO(s^{2\beta-1}),\,\,\,\,\,\, 
 z'_i(s)= c_i s^{\beta-1}+\cO(s^{2\beta-2}),\,\,\,\,\,\,\forall s\in [0,\eps_0]\, ,\eeq
where  $c_0, c_1\ge 0$ not both equal to 0.
We also set
$
\bar c=\frac{c_0+c_1}2
$
and $\alpha:=\beta/(\beta-1)$.\\
We write $\tilde \cM$ for the set of vectors in $\cM$ that are in the cusp area $B_\epsilon(P)$ and such that the previous reflection off of $\partial Q$ was outside the cusp area $B_\epsilon(P)$. Fix $N_0$.
For any $N\geq N_0$, we define $M_N$ to be  the set of points in $\tilde \cM$ whose forward trajectories explore the cusp at $P$ over $N$ reflections off of the first curve, either $\Gamma$ or $\Gamma'$, that it hits, before leaving the cusp.
\subsection{The corner series}
In this subsection,  we  investigate the geometry of corner series, which correspond to  certain billiard trajectories
entering an asymmetric cusp of flatness $\beta>0$ and  experience a large number of refections there before
exiting.  
For a large $N\ge N_0$, we consider a corner sequence entering the cusp at $P$ with $N$ reflection off the first curve it hits before leaving the cusp  (so making either $2N$ or $2N+1$ reflections in the cusp).

We assume moreover (up to permuting the roles played by $\Gamma$ and by $\Gamma'$) that the first reflection is on $\Gamma$.
We denote $(x_n)_n=((r_n,\varphi_n))_n$ as the consecutive sequence colliding with the boundary $\Gamma$, and $(x_n')_n=((r_n',\varphi_n'))_n$ the sequence on $\Gamma'$. Let $s_n$ (resp. $s_n'$) to be the $s$-coordinate of the base point of $x_n$ (resp. $x_n'$), for $n=1,\cdots, N$. By the smoothness of the boundary curves,
\beq\label{rnsn}|r_n|=\int_0^{s_n}\sqrt{1+(z'_0(u))^2}\, du=s_n+\cO(s_n^{2\beta-1}),\,\,\,\,\,\, |r_n'|=s_n'+\cO((s_n')^{2\beta-1})\, . \eeq
To estimate the tail distribution of $\mu_M({\cal R}\geq n)$, we will fix $N_0$ (as above), and only consider those corner series, such that $N\ge N_0$. We will also work
with more convenient coordinates:
$$\gamma_n=\min(\varphi_n,\pi-\varphi_n)
, \,\,\,\,\,\,\,
\gamma_n'=\min(\varphi'_n,\pi-\varphi'_n)
\,\,\,\,\,\,\text{ and }\,\,\,\,\,\,\,\alpha_n=\arctan(z'_0(s_n)),\,\,\,\,\,\,\alpha_n'=\arctan(z'_1(s_n'))\, .$$
Note that by  (\ref{zsP}),  the tangent vector of $\partial Q$ at
$(s_n,z_i(s_n))$ is $(1, z_i'(s_n))$,
which implies, by Taylor-expanding, that
\beq \label{alphansn}\alpha_n=\arctan(z'_0(s_n))=c_0s_n^{\beta-1}+\cO(s_n^{2\beta-2}),\,\,\,\,\alpha_n'=\arctan(z'_1(s'_n))=c_1(s_n')^{\beta-1}+\cO((s_n')^{2\beta-2})\, ,\eeq
where $\alpha_n$ (resp. $\alpha'_n$) stands for the angle in $[0,\frac \pi 2]$ of the tangent line to $\Gamma$ at $(s_n,z_0(s_n))$ (resp. $\Gamma'$ at $(s'_n,-z_1(s'_n))$)
made with the horizontal axis, or equivalently, with the tangent line through the flat point $P$.  Note  that both $\alpha_n$ and
$\gamma_n$ are positive for $1\leq n\leq N-1$.
The sequences $(\alpha_n)_n$ and $(\alpha'_n)_n$ are decreasing and take small values if $N$ is  large enough. While the $\gamma_n$
are initially small, they slowly grow to
about $\pi/2$ for $n\sim N/2$, and then again decrease and
get small.  We use notation similar to that of \cite{chernov2007dispersing}, and define
$N_2$ such that
$$\alpha_{N_2}:=\min\{\alpha_n\,:\, 1\leq n\leq N\}\, .$$
Comparing the trajectory
 $(T^{N_2-2j}x)_{j=0,...,\lfloor N_2/2\rfloor}$
 with the outgoing trajectory $(T^{N_2+2j}x)_{j=0,...,\lfloor (N-N_2)/2\rfloor}$, we conclude that only two cases can occur
$$\mbox{either }s_{N_2-1}<s_{N_2+1}<s_{N_2-2}<s_{N_2+2}<...\mbox{ or  }s_{N_2+1}<s_{N_2-1}<s_{N_2+2}<s_{N_2-2}<... \, .$$
This implies that $|N_2-N/2|\leq 2$.
We further subdivide the corner series into three segments. We fix a small enough
$\bar\gamma$ and  set $$N_1:=\max\{ n\leq N_2\,:\, \gamma_n<\bar\gamma\},\,\,\,\, N_3:=\max\{n\geq N_2\,:\, \gamma_n>\bar\gamma\}.$$
We call the segment on $[1, N_1]$ the ``entering period'' in the corner series, the
segment $[N_1 + 1;N_3-1]$ the ``turning period'', and the segment $[N_3,N]$
the ``exiting period''. The same argument as above for $N_2$ shows that $|(N_3-N_2)-(N_2-N_1)|\le 2$ so that
{$$|N_3+N_1-N|\le |(N_3-N_2)-(N_2-N_1)|+|2N_2-N|$$
implies
$|N_3+N_1-N|\leq 6$.}
By the reversibility of the billiard dynamics, it is enough to consider the first half of the series, $1\leq n\leq N_2$.

Using the relations above, we collect various estimates in the following proposition for a corner series of length $N$ generated by any $x\in M_N$.
We first denote an important function $H$ defined on $\mathcal M$ in the cusp by
$$H(r,\varphi):=|r|^\beta\sin\varphi\, . $$
Recall that here $|r|$ represents the curvilinear distance on $\partial Q$ between the cusp $P$ and the point $x=(r,\varphi)$ in the cusp that we are interested in.
For every $n=1,...,N$, we set
$$H_n:=H(r_n,\varphi_n) =|r_n|^\beta\sin\varphi_n\quad\mbox{and}\quad H'_n:=H(r'_n,\varphi'_n) =|r'_n|^\beta\sin\varphi'_n\, .$$
\begin{proposition}\label{prop3} 
Assume $\beta_0=\beta_1$ or $\max(\beta_0,\beta_1)\ge 2\beta-1$.
The following are true:
\footnote{where $a_n\approx b_n$ means that, for $n$ large enough, there exist $\tilde c,\tilde C>0$ (independent of the corner series  and of $n$) such that $\tilde c\, a_n\le b_n\le \tilde C\, a_n$.}\\
(1)  $N_1\approx N_2-N_1\approx N_3-N_2\approx N-N_3\approx N,$ i.e. all  three segments in the corner series have length of order $N$;\\
(2) $s_2\approx s'_1\approx N^{-\frac{\beta}{(2\beta-1)(\beta-1)}},\,\,\,\, s_{n}\approx s'_n\approx n^{-\frac{1}{\beta-1}}\sim N^{-\frac{1}{\beta-1}}$, for $n\in [N_1, N_2]$;\\
(3) $s_n\approx s'_n\approx (nN^{\frac{\beta}{\beta-1}})^{-\frac{1}{2\beta-1}}$, for
$n\in [2,N_2]$;\\
(4) $\gamma_1\approx \gamma_1'=\cO(N^{-\frac{\beta}{2\beta-1}}), \,\,\gamma_2\approx \gamma_2'\approx N^{-\frac{\beta}{2\beta-1}}$;\\
(5) $
v_n\approx v'_n\approx
\gamma_n\approx \gamma_n'\approx(n N^{-1})^{\frac{\beta}{2\beta-1}}$, for 
$n\in [2,N_2]$;\\
(6) For  $N$ sufficiently large, the  quantity $\{H_{N}((r_n,\varphi_n)), n=1,\cdots, N-1\}$ and $\{H_{N}'((r_n',\varphi_n')), n=1,\cdots, N-1\}$ are both almost invariant along a corner series of length $N$:
$$\forall n=1,...,N_2,\quad H_{n}=C_N+\cO(s_n^{2\beta-1})
\quad\mbox{and}\quad H_{n}'=C'_N+\cO(s_n^{2\beta-1})
\, .$$
\end{proposition}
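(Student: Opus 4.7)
The plan is to adapt the argument of \cite[Prop.~2.1]{JZ17}, which established analogous estimates in the symmetric case $c_0=c_1$, to the asymmetric setting here. The proof rests on three ingredients: (i) a one-step reflection recurrence linearized using \eqref{zsP}, (ii) the approximate conservation of the adiabatic invariant $H_n=|r_n|^\beta\sin\varphi_n$, and (iii) a monotone first-order difference equation for $s_n^{-(\beta-1)}$ derived from (i) and (ii).

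First I would set up the recurrence. Using \eqref{alphansn} and elementary trigonometry, the segment from $(s_n,z_0(s_n))$ to $(s'_n,-z_1(s'_n))$ satisfies
\[
z_0(s_n)+z_1(s'_n) = (s_n-s'_n)\tan(\gamma_n-\alpha_n) + O(s_n^{2\beta-1}),
\]
which after Taylor expansion yields an explicit map $(s_n,\gamma_n)\mapsto(s'_n,\gamma'_n)$, and analogously for the $\Gamma'\to\Gamma$ step. Next I would prove (6) directly: a short calculation using the reflection law together with \eqref{zsP} gives, at each reflection, $H_{n+1}-H_n=O(s_n^{2\beta-1})$ uniformly along the entering period. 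The hypothesis $\beta_0=\beta_1$ or $\max(\beta_0,\beta_1)\ge 2\beta-1$ is precisely what ensures the cross-curve discrepancy is absorbed into this error, so that telescoping gives $H_n=C_N+O(s_n^{2\beta-1})$, and analogously $H'_n=C'_N+O(s_n^{2\beta-1})$.

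Armed with $H_n\approx C_N$, hence $\gamma_n\approx C_N s_n^{-\beta}$, substitution into the linearized recurrence produces, to leading order,
\[
\frac{1}{s_{n+1}^{\beta-1}}-\frac{1}{s_n^{\beta-1}} \approx (\beta-1)\bar c\, \gamma_n^{-1} \approx (\beta-1)\bar c\, C_N^{-1} s_n^{\beta},
\]
which combined with $H_n\approx C_N$ integrates to $s_n^{2\beta-1}\approx \tilde c\, \bar c^{-1}\, C_N\, n^{-1}$. The boundary condition at the turning point, $\gamma_{N_2}\sim 1$, pins down $C_N\sim N^{-\beta/(\beta-1)}$ and immediately delivers (3); evaluating at $n=1,2$ and $n=N_2$ then yields (2) and (4), while (5) follows from $\gamma_n\approx C_N s_n^{-\beta}$ combined with (3). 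Finally, (1) follows from the definitions of $N_1,N_3$ via the threshold $\bar\gamma$ applied to the explicit scaling of $\gamma_n$, together with the bounds $|N_2-N/2|\le 2$ and $|N_3+N_1-N|\le 6$ already established in the text.

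The main obstacle is the systematic bookkeeping of the $O(s^{2\beta-1})$ corrections in the asymmetric case: in \cite{JZ17} the symmetry $c_0=c_1$ produced convenient cancellations in the two-step return map, whereas here one must verify that the asymmetry enters the leading order of both the recurrence and the conservation identity only through $\bar c=(c_0+c_1)/2$, while the skew contribution $(c_0-c_1)$ produces only a shift of the effective axis of the trajectory that the hypothesis on $\beta_0,\beta_1$ allows us to absorb into the error. This careful bookkeeping, performed uniformly over the full length of the entering period, is where the bulk of the technical work lies.
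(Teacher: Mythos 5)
The high-level architecture is sound and mirrors the paper's: a one-step recurrence, an adiabatic invariant, a difference equation for a power of $s_n$, and a boundary condition at the turning point pinning down $C_N\approx N^{-\beta/(\beta-1)}$. But your proof of item (6) has a genuine gap in the telescoping step. You propose to establish the per-step bound $H_{n+1}-H_n=O(s_n^{2\beta-1})$ and then telescope to conclude $H_n=C_N+O(s_n^{2\beta-1})$. This does not follow: once the scaling $s_k\approx (C_N/k)^{1/(2\beta-1)}$ is in force, one has $s_k^{2\beta-1}\approx C_N/k$, so $\sum_{k=n}^{N_2}s_k^{2\beta-1}\approx C_N\log(N_2/n)$, which for $n=O(1)$ is larger than $s_n^{2\beta-1}\approx C_N$ by a factor of $\log N$. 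The crude per-step bound is simply too lossy to sum. The paper resolves this by working not with $H_n=s_n^\beta\sin\gamma_n$ but with the auxiliary quantity $A_n=s_n^\beta\sin v_n$ (where $v_n=\gamma_n+\alpha_n$ is the angle of the reflected ray with the cusp axis rather than with the local tangent), precisely because the increment $A_{n+1}-A_n=O\!\left(s_n^{3\beta-2}/\sin v_n\right)$ is, up to constants, the exact difference $s_n^{2\beta-1}-s_{n+1}^{2\beta-1}$ (see \eqref{sn2b-1}), so that $\sum_{k\ge n}|A_{k+1}-A_k|$ collapses to $O(s_n^{2\beta-1})$ by genuine telescoping. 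Only after establishing $A_n=C_N+O(s_n^{2\beta-1})$ does one transfer to $H_n$ via the pointwise comparison $|H_n-A_n|=O(s_n^{2\beta-1})$ (a single, non-accumulating error). Without identifying the correct almost-invariant quantity, the claimed conclusion of (6) does not follow from your stated estimate.

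A secondary, more minor point: your difference equation is stated for $s_n^{-(\beta-1)}$, whose right-hand side $\propto s_n^\beta/C_N$ still depends on the unknown $s_n$, so the "integration" is not immediate and really requires a bootstrap. The paper instead works with $s_n^{-(2\beta-1)}$, for which $s_{n+1}^{-(2\beta-1)}-s_n^{-(2\beta-1)}\approx C_N^{-1}$ is constant along the entering period, and the integration is genuinely elementary. This is a cleaner route to (3) once $C_N$ is known, and worth adopting.
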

More precisely, we will see in Section \ref{sec:scale} that $C_N=\bar c^{-\alpha} I_1^\alpha N^{-\frac\beta{\beta-1}}+\cO\left(N^{-\frac{2\beta-1}{\beta-1}}\ln N\right)$ and that $C'_N=\bar c^{-\alpha} I_1^\alpha N^{-\frac\beta{\beta-1}}+\cO\left(N^{-\frac{2\beta-1}{\beta-1}}\ln N\right)$ uniformly in $x\in M_N$.
\begin{proof}
For $n=1,\cdots, N_2$,
both $\{\alpha_{n}\}$ and $\{\alpha_n'\}$ are decreasing sequences, and $\{\gamma_{n}\}$ and $\{\gamma_n'\}$ are  increasing sequences. 
Observe that
\beq\label{3.6b}
\gamma_{n+1}=\gamma'_n+\alpha'_n+\alpha_{n+1}\quad\mbox{and}\quad
\gamma'_{n}=\gamma_n+\alpha_n+\alpha'_{n}\, .
\eeq
Now we denote $v_n:=\gamma_n+\alpha_n$ and $v_n':=\gamma_n'+\alpha_n'$. Observe that $v_n$ and $v'_n$ correspond to the angles between the horizontal line (i.e. the tangent line to the cusp) and the reflected directions corresponding to $(r_n,\varphi_n)$ and $(r'_n,\varphi'_n)$, respectively. With this notation, \eqref{3.6b} leads to
\beq\label{3.6}
v_{n+1}=v_{n}'+2 \alpha_{n+1},\,\,\,\,\,v_{n}'=v_{n}+2 \alpha_{n}'\, .\eeq
This also implies that
\beq\label{3.612}
v_{n}=v_{n-1}+2 \alpha_{n}+2 \alpha_{n-1}',\,\,\,\,\,\,\,\,\,v_{n}'=v_{n-1}'+2 \alpha_{n}+2 \alpha_{n}'\, .\eeq
By (\ref{3.6}) and (\ref{3.612}), we have
\beq\label{3.8}
v_2>2\alpha_2,\,\,\,\,\sum_{n=1}^{N_2}\alpha_n\leq v_{N_2}/2\leq \pi/4
\eeq
and
\beq\label{3.8bis}
(n-1)2\bar c\, (s_n^{\beta-1}+\cO(s_n^{2\beta-2}))\le (n-1)(\alpha_n+\alpha'_{n-1})\le\sum_{k=2}^{n}(\alpha_n+\alpha'_{n-1})\leq v_n/2\leq 
   \min\left(\frac\pi 4\sin v_n,\frac{\tan v_n}2\right)\, .
\eeq
This implies in particular that
\beq\label{asymptots_n}
s_n=\cO\left(n^{-\frac 1{\beta-1}}\right)\, .
\eeq
We denote
\beq\label{tau2n-1}
\tau_{n}:= \frac{z_0(s_{n})+z_1(s_{n}')}{\sin v_{n}},\,\,\,\,\,\,\tau_{n}':= \frac{z_0(s_{n+1})+
z_1(s'_{n})
}{\sin v_{n}'}.\eeq
Here $\tau_{n}$ is the free path between two collisions based at $x_{n}$ and $x_n'$, while $\tau_{n}'$ is the free path between two collisions based at $x_{n}'$ and $x_{n+1}$. Observe that
\beq\label{3.7}
s_{n}=s_{n-1}'-\tau'_{n-1}
\cos v'_{n-1}
=s_{n-1}'
-  \frac{z_0(s_{n})+z_1(s_{n-1}')}{ \tan v'_{n-1}},\quad s_{n}'=s_{n}-\tau_n
\cos v_n
=s_{n}-  \frac{z_0(s_{n})+z_1(s_{n}')}{ \tan v_{n}}\, .
\eeq
This implies that
\beq
\label{s2n+1b}
s_{n+1}-s_{n}=\cO\left(\frac{s_n^\beta}{\tan v_n}\right)
\eeq
which combined with \eqref{3.8bis} implies that $s_{n+1}/s_n=1+\cO(s_n^{\beta-1}/\tan v_n)=1+\cO(1/n)$ and that 
\beq \label{snsn+1}
s_n/s_{n+1}=\cO(1)\, .
\eeq 
More precisely, we obtain
that
\begin{align}
s_{n+1}-s_{n}&=-  \frac{z_0(s_{n+1})+z_0(s_{n})}{ \tan v_{n}}-  \frac{2z_1(s_{n}')}{ \tan v_{n}}+\cO\left(\frac{s_{n}^{2\beta-1}}{(\sin v_{n})^2}\right)\label{diffsn}\\
&=-\frac {4\bar c s_n^\beta}{\beta\tan v_n}+\cO\left(\frac{s_{n}^{2\beta-1}}{(\sin v_{n})^2}\right)\, .\label{s2n+1}
\end{align}
where we used the fact that $\frac 1{\tan v_n}-\frac 1{\tan v'_n}=\cO\left(\frac{|v_n-v'_n|}{\sin^2 v_n}\right)$ combined with
\eqref{3.6} and \eqref{zsP}.
We denote
$$A_n:= s_n^{\beta}\sin v_n,\quad n=1,\cdots, N_2\, .$$
The next lemma tells that $A_n\sim A_{N_2}$ is almost invariant,  as long as $N$ is large. 
We set 
$\tilde N_2:=\max\{n<N_2\, :\, v_n<\frac \pi 2-\bar\eta_1\}$
(for any fixed $\bar\eta_1>0$)
and prove the following estimates.
\begin{lemma}
	\beq\label{diffAn}
A_{n+1}-A_n= \cO\left(\frac{s_{n}^{3\beta-2}}{\sin v_n}\right)\, .\eeq
Moreover
\beq\label{Hn1}
\forall n=1,\cdots,  \tilde N_2,\quad A_{n} = A_{\tilde N_2}+\cO(s_n^{2\beta-1})\, ,
\eeq
and 
for every $\boldsymbol{\varepsilon}\in(0,1)$,
\beq\label{Hn2}
\forall n=1,\cdots,   N_2,\quad A_n=A_{\tilde N_2}+O(s_n^{2\beta-1-\boldsymbol{\varepsilon}})\, .
\eeq
\end{lemma}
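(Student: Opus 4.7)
The plan is to first establish \eqref{diffAn} by a careful first-order expansion, and then to deduce \eqref{Hn1} and \eqref{Hn2} by telescoping. Expanding $A_{n+1}-A_n$ by Taylor's formula in both variables around $(s_n,v_n)$,
\[
A_{n+1}-A_n \;=\; \beta\,s_n^{\beta-1}\sin v_n\,(s_{n+1}-s_n)\;+\;s_n^{\beta}\cos v_n\,(v_{n+1}-v_n)\;+\;\text{(quadratic remainder)},
\]
I would substitute $s_{n+1}-s_n=-\tfrac{4\bar c\,s_n^{\beta}}{\beta\tan v_n}+\cO(s_n^{2\beta-1}/\sin^{2}v_n)$ from \eqref{s2n+1}, and $v_{n+1}-v_n=4\bar c\,s_n^{\beta-1}+\cO(s_n^{2\beta-2}/\sin v_n)$ obtained from \eqref{3.612} and \eqref{alphansn} together with $s_{n+1},s'_n=s_n+\cO(s_n^\beta/\sin v_n)$ (cf.\ \eqref{s2n+1b}, \eqref{snsn+1}). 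The two leading first-order contributions evaluate to $\mp 4\bar c\,s_n^{2\beta-1}\cos v_n$ and cancel exactly, while a direct computation shows the quadratic remainder is also $\cO(s_n^{3\beta-2}/\sin v_n)$. This cancellation is the structural heart of the lemma: $s^{\beta}\sin v$ is an approximate first integral of the corner dynamics, the discrete analogue of the conserved quantity used in \cite{chernov2007dispersing} for circular cusps.

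For \eqref{Hn1}, I would telescope $A_{n}-A_{\tilde N_2}=-\sum_{k=n}^{\tilde N_2-1}(A_{k+1}-A_k)$. The inequality \eqref{3.8bis} gives $\sin v_k\gtrsim (k-1)\bar c\,s_k^{\beta-1}$, so \eqref{diffAn} upgrades to $|A_{k+1}-A_k|=\cO(s_k^{2\beta-1}/(k-1))$. Combining with $s_k\lesssim k^{-1/(\beta-1)}$ from \eqref{asymptots_n}, each term is $\cO(k^{-(3\beta-2)/(\beta-1)})$; since $(3\beta-2)/(\beta-1)>1$ for every $\beta>1$, the tail sum from $k=n$ telescopes to $\cO(n^{-(2\beta-1)/(\beta-1)})=\cO(s_n^{2\beta-1})$, as claimed.

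The extension to \eqref{Hn2} is more delicate because for $n\in(\tilde N_2,N_2]$ the angle $v_k$ enters the turning regime and the sharp lower bound $\sin v_k\gtrsim(k-1)s_k^{\beta-1}$ loses its bite. In this short window I would instead use that $\sin v_k\ge\sin(\pi/2-\bar\eta_1)$ is simply bounded below, so \eqref{diffAn} directly gives $|A_{k+1}-A_k|=\cO(s_k^{3\beta-2})$; summing over the $\cO(N)$ steps and using $s_k\asymp N^{-1/(\beta-1)}$ throughout the turning period again yields an $\cO(s_n^{2\beta-1})$ contribution. The arbitrary $\boldsymbol{\varepsilon}>0$ slack in the exponent of \eqref{Hn2} is introduced precisely to absorb the non-uniformity of the implicit constants in \eqref{s2n+1} and \eqref{3.8bis} as $\cos v_k\to 0$, without having to track them sharply.

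The main obstacle is guaranteeing the cancellation in \eqref{diffAn} uniformly in the asymmetry of the cusp, i.e.\ when $c_0\ne c_1$. The cancellation succeeds only because the two-step increment $\alpha_{n+1}+\alpha'_n$ pairs the two boundary curves symmetrically, producing exactly the averaged coefficient $\bar c=(c_0+c_1)/2$ in both the $s$-variation and the $v$-variation; verifying this symmetrisation while correctly propagating the $\cO(s^{2\beta-1})$ corrections to $z_0,z_1$ from \eqref{zsP} through the composite sub-step $s_n\to s'_n\to s_{n+1}$ is the main algebraic bookkeeping of the argument.
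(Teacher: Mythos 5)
Your derivation of \eqref{diffAn} is correct and follows the same route as the paper: the leading first-order contributions $\beta s_n^{\beta-1}\sin v_n\,(s_{n+1}-s_n)$ and $s_n^{\beta}\cos v_n\,(v_{n+1}-v_n)$ each reduce to $\mp 4\bar c\,s_n^{2\beta-1}\cos v_n$ and cancel, and the remaining terms are all $\cO(s_n^{3\beta-2}/\sin v_n)$.

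However, your argument for \eqref{Hn1} has a genuine gap in its last step. After bounding $|A_{k+1}-A_k|=\cO(s_k^{2\beta-1}/(k-1))$ you replace $s_k$ by its crude upper bound $k^{-1/(\beta-1)}$ from \eqref{asymptots_n}, sum the tail, and obtain $\cO(n^{-(2\beta-1)/(\beta-1)})$. You then assert this is $\cO(s_n^{2\beta-1})$, which would require a matching \emph{lower} bound $s_n\gtrsim n^{-1/(\beta-1)}$. That lower bound is not available at this point in the paper (it is part of what Proposition \ref{prop3} is to prove), and in fact it is false for $n$ in the entering period: the sharp estimate \eqref{AAA3} gives $s_n\approx(C_N/n)^{1/(2\beta-1)}$, so $s_n^{2\beta-1}\approx C_N/n\approx n^{-1}N^{-\beta/(\beta-1)}$, which is smaller than $n^{-(2\beta-1)/(\beta-1)}$ by a factor $(N/n)^{\beta/(\beta-1)}$. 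Your bound is thus weaker than \eqref{Hn1} by that factor, which can be arbitrarily large for $n$ small compared with $N$. The information you are throwing away is precisely the cancellation used in the paper's proof: from \eqref{s2n+1} one obtains \eqref{sn2b-1}, which says $s_k^{3\beta-2}/\tan v_k$ is, up to a constant and a summable error $\cO(s_k^{2\beta-1}/k^2)$, the discrete derivative of $-s_k^{2\beta-1}$; summing it then \emph{telescopes} exactly to $\cO(s_n^{2\beta-1})$. Replacing $s_k$ by a uniform upper bound destroys this telescoping structure.

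For \eqref{Hn2} your sketch has a related issue. You invoke $s_k\asymp N^{-1/(\beta-1)}$ on the turning period and $N_2-\tilde N_2=\cO(N)$, but these are consequences of \eqref{AAA1}--\eqref{AAA4}, which the paper derives \emph{from} the present lemma. Using them here is circular. The paper's argument for \eqref{Hn2} avoids this: it factors $s_k^{3\beta-2}=s_k^{\beta+(\beta-1)(1-\mathbf u)}\cdot s_k^{(\beta-1)(1+\mathbf u)}$, pulls $s_n^{\beta+(\beta-1)(1-\mathbf u)}$ out of the sum by monotonicity of $(s_k)$, and then bounds the residual sum using $s_k^{\beta-1}\le C\sin v_k/k$ from \eqref{3.8bis}, giving $\sum_k(\sin v_k)^{\mathbf u}/k^{1+\mathbf u}\le\sum_k k^{-(1+\mathbf u)}<\infty$. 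This explains the role of $\boldsymbol\varepsilon$: it is the price paid for the strict positivity of $\mathbf u$ needed for the sum to converge, not a buffer against non-uniform constants as $\cos v_k\to0$; the estimate \eqref{3.8bis} itself holds uniformly through the turning region, what fails there is only the identification $\sin v_k\approx\tan v_k$ that made \eqref{Hn1}'s telescoping usable.
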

\begin{proof}
We start by writing
$$A_{n+1}-A_n=(s_{n+1}^{\beta}-s_{n}^{\beta})(\sin v_n+\sin v_{n+1}-\sin v_n)+s_n^{\beta}(\sin v_{n+1}-\sin v_n)\, .$$
Taylor expansions at the first order combined with \eqref{s2n+1} and with \eqref{3.612} lead to
$$s_{n+1}^\beta-s_n^\beta=\beta s_n^{\beta-1}(s_{n+1}-s_n)+\cO\left(s_n^{\beta-2}(s_{n+1}-s_n)^2\right)
=-\frac{4\bar cs_n^{2\beta-1}}{\tan v_n}+\cO\left(\frac {s_n^{3\beta-2}}{(\sin v_n)^2}\right)\, ,$$
and
\begin{align*}
\sin v_{n+1}-\sin v_n&=\cos v_n\left(v_{n+1}-v_n\right)+\cO\left(v_n\left(v_{n+1}-v_n\right)^2\right)\\
&=2\cos v_n\left(\alpha_{n+1}+\alpha'_n\right)+\cO\left( s_n^{2\beta-2}\right)\\
&=4\bar c s_n^{\beta-1}\cos v_n+\cO\left(s_n^{2\beta-2}/\tan v_n\right)\, . 
\end{align*}
Therefore
\beq
A_{n+1}-A_{n}=
\cO\left(\frac{s_{n}^{3\beta-2}}{\sin v_n}\right)
\label{en}\, .
\eeq
	Using \eqref{s2n+1} and \eqref{snsn+1}, it comes that
	\begin{align}
	s_{n+1}^{2\beta-1}-s_n^{2\beta-1}&=-\frac{4\bar c(2\beta-1)}{\beta}\frac{s_{n}^{3\beta-2}}{ \tan v_n}+\cO\left(\frac{s_n^{4\beta-3}}{\sin^2 v_n}\right)\nonumber\\
	&=-\frac{4\bar c(2\beta-1)}{\beta}\frac{s_{n}^{3\beta-2}}{ \tan v_n}+\cO\left(\frac{s_n^{2\beta-1}}{n^2}\right)\, ,\label{sn2b-1}
	\end{align}
	due to \eqref{3.8bis}, which implies that
$$\forall n=1,...,N_2,\quad \sum_{k=n}^{N_2} \frac{s_k^{3\beta-2}}{\tan v_k}=\cO\left( s_n^{2\beta-1}\right)\, ,$$
and so that
$$
\forall n=1,...,\tilde N_2,\quad \left|A_n-A_{\tilde N_2}\right|\le \sum_{k=n}^{\tilde N_2-1}\left|A_{k+1}-A_k\right|=\cO\left( s_n^{2\beta-1}\right)\, .$$
Let $\mathbf{u}\in(0,1)$ be such that $\beta+(\beta-1)(1-\mathbf{u})= 2\beta-1-\boldsymbol{\varepsilon}$.
For every $n= 1,...,N_2$, 
\begin{align*}
A_n-A_{N_2}&=\cO\left(\sum_{k=n}^{N_2} \frac{s_k^{3\beta-2}}{\sin v_k}\right)=\cO\left(s_n^{\beta+(\beta-1)(1-\mathbf{u})}\sum_{k=n}^{N_2} \frac{s_k^{(\beta-1)(1+\mathbf{u})}}{\sin v_k}\right)\\
&=\cO\left(s_n^{\beta+(\beta-1)(1-\mathbf{u})}\sum_{k=n}^{N_2} \frac{(\sin v_k/k)^{1+\mathbf{u}}}{\sin v_k}\right)=\cO(s_n^{\beta+(\beta-1)(1-\mathbf{u})})\, , 
\end{align*}
using \eqref{3.8bis}.
\end{proof}
 We define \beq\label{defCN}
C_N:=A_{\tilde N_2}=s_{\tilde N_2}^{\beta}\sin v_{\tilde N_2}\,.\eeq
Now let us prove the key estimates 
\begin{itemize}
	\item For every $n=N_1,...,N_2$, $\sin v_n\approx 1$ and so $s_n^\beta\approx s_{N_1}^\beta\approx s_{N_2}^\beta\approx C_N$. Therefore
	\beq\label{AAA1}\forall n=N_1,...,N_2,\quad s_n\approx C_N^{\frac 1\beta}.\eeq
	\item $1\approx v_{N_2}-v_{N_1}\approx
	   \sum_{k=N_1}^{N_2}s_k^{\beta-1}\approx(N_2-N_1)C_N^{\frac {\beta-1}\beta}$. Therefore
	   \beq\label{AAA2}
	    N_2-N_1\approx C_N^{-\frac{\beta-1}\beta}\, . \eeq
\item Let us see that $v_1=O(s_1^{\beta-1})$. This is obviously true if $s_1\ge \epsilon_0/2$.
Assume now that $s_1< \epsilon_0/2$, then by definition of $s_1$ the incident line
at $(s_1,z(z_1))$ intersects $[(2s_1,z_0(2s_1)),(2s_1,-z_1(2s_1))]$, which implies that
$\tan (v_1-\alpha_1)\le \cO(s_1^\beta)/s_1=\cO(s_1^{\beta-1}) $, and so $v_1=\cO(s_1^{\beta-1})$.
\\
 This combined with \eqref{3.612} and \eqref{snsn+1} implies that 
	$v_2\approx s_2^{\beta-1}$ and so
	$C_N\approx s_2^\beta v_2\approx  s_2^{2\beta-1}$, and so
	\beq\label{s2}
	 s_2\approx{C_N}^{\frac 1{2\beta-1}}\, . \eeq
	 \item For every $n=2,...,N_1$, $s_{n+1}-s_n\approx\frac{s_n^\beta}{v_n}\approx\frac{s_n^{\beta}}{C_Ns_n^{-\beta}}\approx \frac{s_n^{2\beta}}{C_N}$ and so
	 $$s_{n+1}^{-2\beta+1}-s_{n}^{-2\beta+1}\approx s_n^{-2\beta}(s_{n+1}-s_n)\approx C_N^{-1} \, .$$
	 Moreover, due to \eqref{s2}, since $s_2^{-2\beta+1}\approx C_N^{-1}$. Therefore
	 \beq\label{AAA3}
	 \forall n=2,...,N_1,\quad  s_n\approx\left( s_n^{-2\beta+1}\right)^{-\frac 1{2\beta-1}} 
	 \approx\left(\frac {n}{C_N}\right)^{-\frac 1{2\beta-1}}\approx\left(\frac {C_N}{n}\right)^{\frac 1{2\beta-1}}\, .\eeq
	 \item Due to \eqref{AAA3}, $s_{N_1}\approx (C_N/N_1)^{\frac 1{2\beta-1}}$. This and \eqref{AAA1} leads to $(C_N/N_1)^{\frac 1{2\beta-1}}\approx C_N^{\frac 1\beta}$. Therefore
	 \beq\label{AAA4} N_1\approx C_N^{-\frac{\beta-1}\beta}\, .\eeq
	 Combining this with \eqref{AAA2}, we obtain
	 \beq 
	 N_1\approx N_2-N_1\approx N\approx C_N^{-\frac{\beta-1}{\beta}}
	 \eeq
	 and so in particular
	 \beq \label{CN}
	 C_N\approx N^{-\frac{\beta}{\beta-1}} \, .
	 \eeq
	 \item Combining \eqref{CN} with respectively \eqref{AAA3} and \eqref{AAA1},we obtain
	 \beq\label{AAA3b}
	 \forall n=2,...,N_1,\quad  s_n\approx\left(n\,  N^{\frac{\beta}{\beta-1}}\right)^{-\frac 1{2\beta-1}}\, .\eeq
	 \beq\label{AAA1b}
	 \forall n=N_1,...,N_2,\quad  s_n\approx N^{-\frac 1{\beta-1}}\approx\left(n\,  N^{\frac{\beta}{\beta-1}}\right)^{-\frac 1{2\beta-1}}\, ,\eeq
	 since $N_1\approx N_2\approx N$.
\end{itemize}
This further implies that for $n\in [2,N_2]$, we have
\beq\label{N1N2}
s_n^{\beta-1}\approx (\alpha_n+\alpha'_{n-1})\approx (n^{\beta-1}N^{\beta})^{-\frac{1}{2\beta-1}},\,\,\,\,\,\gamma_n\approx v_n\approx (n N^{-1})^{\frac{\beta}{2\beta-1}}.\eeq
In particularly, for $n\in [N_1,N_2]$, using the above fact that $N_1\approx N_2\approx N$, we have
\beq\label{N1N22}(\alpha_n+\alpha'_{n-1})\approx s_n^{\beta-1}\approx n^{-1}\approx N^{-1},\,\,\,\,\,v_n\approx 1.
\eeq
Now by \eqref{Hn1}, we know that
$$\forall n=1,...,\tilde N_2,\quad A_n=s_n^{\beta}\sin v_n=C_N+\cO(s_n^{2\beta-1})\, .$$
Due to \eqref{diffAn} and to \eqref{N1N22},
$$\forall n=\tilde N_2,...,N_2,\quad  
      A_{N_2}-A_n=\cO\left(\sum_{k=\tilde N_2}^{N_2} s_n^{3\beta-2}\right) =
      \cO\left( s_n^{1-\beta} s_n^{3\beta-2} \right)=\cO\left(s_n^{2\beta-1}\right)\, .$$ 
Due to \eqref{rnsn},
we have
\beq\label{defnHn}
H_{n}=|r_n|^{\beta}\sin \varphi_n= s_n^{\beta}\sin\gamma_n+\cO(s_n^{3\beta-2}\sin\gamma_n).
\eeq
The above estimation implies that
\begin{align*}
H_{n}&=A_n-s_n^{\beta}(\sin \gamma_n-\sin v_n)+\cO(s_n^{3\beta-2}\sin\gamma_n)\\
&=A_n+2s_n^{\beta} \sin\frac{\alpha_n}{2} \, \cos(\gamma_n+\frac{\alpha_n}{2})+\cO(s_n^{3\beta-2}\sin\gamma_n)\\
&=A_n+\cO(s_n^{2\beta-1})
=C_N+\cO(s_n^{2\beta-1})\, ,
\end{align*}
where we used $\alpha_n=\cO(s_n^{\beta-1})$.
We also observe that
\beq\label{defnH'n}
H'_{n}=|r'_n|^{\beta}\sin \varphi_n'= (s_n')^{\beta}\sin(\gamma_n')+\cO(s_n^{2\beta}\sin\gamma_n).
\eeq
The above estimation implies that
$H'_{n}=C_N+\cO(s_n^{2\beta-1})$.
\end{proof}


 Due to the time reversibility of billiard dynamics, all the asymptotic formulas obtained for the entering period remain valid for the exiting period.

%
%
%
%
%
\subsection{Proof of Lemma \ref{Mm2}}\label{sec:scale}
Recall that $\alpha=\frac{\beta}{\beta-1}$. 
We set $\nu:=2|\partial Q|\mu$ for the non normalized measure on $\mathcal M$ simply given by $d\nu=\sin\varphi\, dr\, d\varphi$. Write $\tilde M$ for the set of vectors in $\mathcal M$ that are in the cusp area $B_\epsilon(P)$ and such that the previous reflection off $Q$ was outside the cusp area $B_\epsilon(P)$. 
The purpose of this section is to prove that

\beq\label{eq:scale parameter2BB}
\lim_{y\to\infty} y^\alpha \nu\left(\tilde M\, :\, \mathcal R > y\right)=
  \frac 4\beta\bar c^{-\frac 1{\beta-1}}I_1^{\alpha} \,,
\eeq
which will imply Lemma \ref{Mm2} since $\nu$ is $T$-invariant and since 
$\tilde\mu=\frac{\nu_{|M}}
   {\mu(M)|\partial Q|}$.
Recall that we denote 
$ T ^n x=(r_n,\varphi_n)$
and $\gamma_n=\min(\varphi_n, \pi-\varphi_n)$. 
Due to Proposition \ref{prop3}, for $N$ large enough  the following sequences are almost constant for $n=1,\ldots, N$:
\beq\label{hnB} 
H_n=C_N+\cO(s_n^{2\beta-1}), H_n'=C_N'+\cO((s_n')^{2\beta-1})\, ,
\eeq
where $C_N=\cO\left( N^{-\alpha}\right) $, $C_N'=\cO\left( N^{-\alpha}\right) $.
In order to estimate $C_N$ and $C_N'$, we use an elliptic integral and introduce
\beq\nn w_n:=\int_0^{\gamma_n} (\sin u)^{1-\frac{1}{\beta}}\, du,\eeq
 for $n=1,\ldots, N_2$.
Then
\beq\label{vn1B} w_{n+1}-w_n=\int_{\gamma_{n}}^{\gamma_{n+1}} (\sin u)^{1-\frac{1}{\beta}}\, du=(\sin\gamma_n^*)^{1-\frac{1}{\beta}} (\gamma_{n+1}-\gamma_{n})\eeq
for some $\gamma_n^*\in [\gamma_{n},\gamma_{n+1}]$. 
Due to \eqref{defnHn} and \eqref{hnB},
we have
\beq\label{hn2B}
\sin\gamma_n=\frac{H_n}{r_n^{\beta} }=\frac{H_n}{s_n^\beta}+\cO\left(s_n^{2\beta-2}\right)=\frac{C_N}{s_n^\beta}+\cO(s_n^{\beta-1}).
\eeq
By \eqref{3.6b} and \eqref{s2n+1b},
\begin{align}\label{phin+1nB}
\gamma_{n+1}-\gamma_{n}=2\alpha'_n+\alpha_n+\alpha_{n+1}=4\bar c s_n^{\beta-1}+\cO\(s_n^{2\beta-2}/\tan v_n\).
\end{align}
and $\gamma_{n+1}-\gamma_n=\cO(s_n^{\beta-1})$.
Now combining the above and recalling that $\alpha=\beta/(\beta-1)$, we rewrite \eqref{vn1B} as
\begin{align}
&w_{n+1}-w_n=(\sin^{\frac {\beta-1}{\beta}}\gamma_n)\, (\gamma_{n+1}-\gamma_n)+\cO\left(\sin^{-\frac 1\beta}\gamma_n\, (\gamma_{n+1}-\gamma_n)^2\right)\nonumber\\
&=\left[\left(\frac{C_N}{s_n^\beta}\right)^{\frac{\beta-1}{\beta}}\!\! \!\!+\cO\left(\sin^{-\frac 1\beta}\gamma_ns_n^{\beta-1}\right)\right]\, \left[4\bar c s_n^{\beta-1}+\cO\(s_n^{2\beta-2}/\tan v_n\)\right]+\cO\left(\gamma_n^{-\frac 1\beta}\, s_n^{2\beta-2}\right)\nonumber\\
&=4\bar c (C_N)^{\frac{1}{\alpha}}+\cO(\gamma_n^{-\frac 1\beta}\, s_n^{2\beta-2})\nonumber\\
&=4\bar c (C_N)^{\frac{1}{\alpha}}+\cO(N^{-1} n^{-1})\, ,\label{vn+1B}
\end{align}
due to Proposition \ref{prop3}.
Recalling \eqref{hnB}, if we use a dummy variable and sum \eqref{vn+1B} from $1$ to $n$, we get
\beq\label{vnnB}w_n=4\bar c\, n\, C_N^{\frac{1}{\alpha}}+\cO(\ln n/N)\, .\eeq
In particular, for $n = N_2 = N/2 + \cO(1)$ we get 
\begin{eqnarray*}
\int_{0}^{\pi/2}(\sin u)^{1-\frac{1}{\beta}}\,du&=&\int_0^{\gamma_{N_2}}(\sin u)^{1-\frac{1}{\beta}}\, du+\cO\left(\gamma_n-\frac \pi 2\right)\\
&=&w_{N_2}+\cO(s_{N_2}^{\beta-1})=w_{N_2}+\cO(N^{-1}) =2\bar c NC_N^{\frac{1}{\alpha}}+\cO(\ln N/N).
\end{eqnarray*}
Thus
\begin{align}\label{eq:dNBB}
{C_N}{N^\alpha}=(2\bar c)^{-\alpha}\(\int_{0}^{\pi/2}(\sin u)^{1-\frac{1}{\beta}}\,du\)^{\alpha} +\cO( N^{-\alpha-2}\ln N)=\frac{I_1^{\alpha}}{(2\bar c)^{\alpha}}+\cO(\ln N/N)\, ,
\end{align}
and so
$$w_n=\frac{2I_1\,  n}N+\cO\left(\ln n/N\right)\, . $$
Similarly, one can show that
\begin{align}\label{eq:dNB'B}
{C'_N}{N^\alpha}=(2\bar c)^{-\alpha}I_1^{\alpha}+\cO(\ln N/N).
\end{align}
Let $N'$ be the number of reflections in the cusp (both on $\Gamma$ and on $\Gamma'$). Note that either $N'=2N$ or $N'=2N+1$, so that
\begin{align}\label{toto}
{C_N}{N'^\alpha}=\bar c^{-\alpha}I_1^{\alpha}+\cO(\ln N'/N').
\quad
\mbox{and}\quad
{C'_N}{N'^\alpha}=\bar c^{-\alpha}I_1^{\alpha}+\cO(\ln N'/N').
\end{align}
Let $\tilde M_m$ be the set of points in $\tilde M$ 
whose forward trajectories explore the cusp at $P$ during $m$ reflections off $\Gamma\cup \Gamma'$, before leaving the cusp.
Since $\nu$ is $T$-invariant,
\beq
\nu\left(\tilde M_m\right)=\frac 1 m\nu\left(\bigcup_{n=1}^mT^{n}\tilde M_m\right)\, .
\eeq
Let $y$ be an integer.
Observe that
$$\sum_{m'\ge y} 1_{\{N'\ge m'\}}=\sum_{m'\ge y}\sum_{m\ge m'} 1_{\{N'=m\}}=\sum_{m\ge y}\sum_{m'=y}^m 1_{\{N'=m\}}=\sum_{m\ge  y}(m-y+1) 1_{\{N'=m\}},$$ and so
\begin{eqnarray}
\sum_{m'\ge y}\nu\left(x\in \tilde M\ :\ N'\ge m'\right)
&=&\sum_{m\ge y}(m-y+1)\nu\left(\tilde M_m\right)\nonumber\\
&=&
-y\nu(x\in\tilde M\, :\, N'\ge y)+\nu\left(\bigcup_{m\ge y}\bigcup_{n=0}^{m}T^{n}\tilde M_m\right)\, .\label{KeyEQ}
\end{eqnarray}
But \eqref{hnB} and \eqref{eq:dNBB} together imply
that the set $\bigcup_{m\ge y}\bigcup_{n=1}^mT^{m}\tilde M_m$ corresponds to the set of points $(r,\varphi)$ based in the cusp
area $B_\epsilon(P)$ such that 
$$H(r,\varphi)=|r|^\beta\sin\varphi\le \bar c^{-\alpha}I_1^{\alpha}y^{-\alpha}+\cO(r^{2\beta-1})+\cO(y^{-1-\alpha}\ln y)\, .$$
Therefore the set $\bigcup_{m\ge y}\bigcup_{n=1}^mT^{m}\tilde M_m$ corresponds to the set of points $(r,\varphi)$ based in the cusp
area $B_\epsilon(P)$ such that 
$$r\le  \frac{\bar c^{-\frac\alpha\beta}I_1^{\frac\alpha\beta} y^{-\frac\alpha\beta}}{ \sin^{\frac{1}{\beta}}\varphi}+\cO\left(\frac{y^{-\alpha(\frac 1\beta-1)}}{\sin^{\frac 1\beta}\varphi}(\ln y/y+r^{2\beta-1})\right) .$$
Therefore
 \begin{align}
\nu\left(\bigcup_{m\ge y}\bigcup_{n=0}^mT^{m}\tilde M_m\right)\nonumber
&= 2\int_0^{\pi} 
\frac{\bar c^{-\frac\alpha\beta}I_1^{\frac\alpha\beta} y^{-\frac\alpha\beta}}{  \sin^{\frac{1}{\beta}-1}\varphi}\, d\varphi+\cO\left(y^{-1}\right)\nonumber\\
 &=  4\bar c^{1-\alpha}I_1^{\alpha} y^{1-\alpha}+\cO\left(y^{-1}\right)\, .\label{muRnB}
\end{align}
This tells us that $$\sum_{m\ge y}(m+1)\nu(\tilde M_m)\sim 4\bar c^{1-\alpha}I_1^{\alpha} y^{-\frac1{\beta-1}}$$ as $y$ goes to infinity.
Unfortunately we cannot apply directly the classical Tauberian theorem because 
we do not know if $m\nu(\tilde M_m)$ is decreasing or not.
But we do not want to estimate $\nu(\tilde M_m)$, we just want to estimate $$\mathfrak p_y:=\nu\left(x\in\tilde M\, :\, N'\ge y\right)=\sum_{m\ge y}\nu(\tilde M_m).$$
To this end, we adapt the Tauberian theorem argument as follows.
Due to \eqref{KeyEQ} and \eqref{muRnB} (considering the difference between the term of order $\lceil y\rceil$ and the term of order $\lceil(1+\varepsilon)y\rceil$), we know that, for every $\varepsilon>0$,
\beq\label{diffpy}
\lceil y\rceil \mathfrak p_{\lceil y\rceil}-\lceil (1+\varepsilon) y\rceil \mathfrak p_{\lceil (1+\varepsilon) y\rceil}+\sum_{m=\lceil y\rceil}^{\lceil (1+\varepsilon) y\rceil-1}\mathfrak p_m \sim
    4\bar c^{1-\alpha}I_1^{\alpha}\left(1-(1+\varepsilon)^{1-\alpha}\right) y^{1-\alpha}\, ,
\eeq
as $y$ goes to infinity.
Now using the fact that $(\mathfrak p_m)_m$ is decreasing, we obtain that
\beq\label{Tauberianestimate}
\lceil y\rceil \left(\mathfrak p_{\lceil y\rceil}-\mathfrak p_{\lceil (1+\varepsilon) y\rceil}\right)\le   \lceil y\rceil \mathfrak p_{\lceil y\rceil}-\lceil (1+\varepsilon) y\rceil \mathfrak p_{\lceil (1+\varepsilon) y\rceil}+\sum_{m=\lceil y\rceil}^{\lceil (1+\varepsilon) y\rceil}\mathfrak p_m\le 
\lceil (1+\varepsilon) y\rceil \left(\mathfrak p_{\lceil y\rceil}-\mathfrak p_{\lceil (1+\varepsilon) y\rceil}\right)\, .
\eeq
Fix an arbitrary $\vartheta\in(0,1)$.
Choose $\varepsilon\in(0,\vartheta)$ small enough so that 
$$ (\alpha-1)\varepsilon(1-\vartheta)\le 1-(1+\varepsilon)^{1-\alpha}\le  (\alpha-1)\varepsilon(1+\vartheta)$$
and
$$ \alpha\varepsilon(1-\vartheta)\le 1-(1+\varepsilon)^{-\alpha}\le  \alpha\varepsilon(1+\vartheta)\, .$$
Due to \eqref{diffpy}, for every $y$ large enough, we know that
$$(\alpha-1)\varepsilon(1-\vartheta)^2 4\bar c^{1-\alpha}I_1^{\alpha} y^{1-\alpha}\le \lceil y\rceil \mathfrak p_{\lceil y\rceil}-\lceil (1+\varepsilon) y\rceil \mathfrak p_{\lceil (1+\varepsilon) y\rceil}+\sum_{m=y}^{\lceil (1+\varepsilon) y\rceil}\mathfrak p_m \le (\alpha-1)\varepsilon(1+\vartheta)^2
    4\bar c^{1-\alpha}I_1^{\alpha} y^{1-\alpha}\, ,$$
and so, due to \eqref{Tauberianestimate}, we obtain that
$$(\alpha-1)\varepsilon\frac{(1-\vartheta)^2}{1+\vartheta} 4\bar c^{1-\alpha}I_1^{\alpha} y^{-\alpha}\le \mathfrak p_{\lceil y\rceil}-\mathfrak p_{\lceil (1+\varepsilon) y\rceil} \le (\alpha-1)\varepsilon(1+\vartheta)^2
    4\bar c^{1-\alpha}I_1^{\alpha} y^{-\alpha}\, ,$$
for every $y$ large enough.
Now using the fact that $\mathfrak p_y=\mathfrak p_{\lceil y\rceil}=\sum_{k\ge 0}\left(p_{\lceil (1+\varepsilon)^k y\rceil}-p_{\lceil (1+\varepsilon)^{k+1}y\rceil}\right)$, it follows that, for every $y$ large enough,
$$(\alpha-1)\varepsilon\frac{(1-\vartheta)^2}{1+\vartheta} 4\bar c^{1-\alpha}I_1^{\alpha} \sum_{k\ge 0}(1+\varepsilon)^{-k\alpha} y^{-\alpha}\le \mathfrak p_y \le (\alpha-1)\varepsilon(1+\vartheta)^2
    4\bar c^{1-\alpha}I_1^{\alpha} \sum_{k\ge 0}(1+\varepsilon)^{-k\alpha}y^{-\alpha}\, $$
which can be rewritten
$$(\alpha-1)\frac\varepsilon{1-(1+\varepsilon)^{-\alpha}}\frac{(1-\vartheta)^2}{1+\vartheta} 4\bar c^{1-\alpha}I_1^{\alpha} y^{-\alpha}\le \mathfrak p_y \le (\alpha-1)\frac\varepsilon{1-(1+\varepsilon)^{-\alpha}}(1+\vartheta)^2
    4\bar c^{1-\alpha}I_1^{\alpha} y^{-\alpha}\, .$$
Using our second condition on $\varepsilon$, this leads to
$$\frac{\alpha-1}\alpha\frac{(1-\vartheta)^2}{(1+\vartheta)^2} 4
\bar c^{1-\alpha}I_1^{\alpha} y^{-\alpha}\le \mathfrak p_y \le \frac{\alpha-1}\alpha\frac{(1+\vartheta)^2}{1-\vartheta}
    4\bar c^{1-\alpha}I_1^{\alpha} y^{-\alpha}\, ,$$
for every $y$ large enough.
Since $\vartheta$ is arbitrary, we have proved that
$$\mathfrak p_y\sim  4\frac{\alpha-1}\alpha\bar c^{1-\alpha}I_1^{\alpha} y^{-\alpha}=4\beta^{-1}\bar c^{1-\alpha}I_1^{\alpha} y^{-\alpha}\, ,$$
as $y$ goes to infinity,
which completes the proof of Lemma \ref{Mm2}.
\begin{proof}[Proof of Lemma \ref{lem2}]
We also now obtain the proof of Lemma \ref{lem2} by using  \eqref{vnnB} and \eqref{toto} in place of (6.8) and (6.9) in the proof of Lemma 4.4 in \cite{JZ17}, and then making the appropriate obvious adjustments.
\end{proof}

\section{Skorokhod $J_1$ and $M_1$ topologies}
A stronger result than the limit theorem in \eqref{stablelaw} is its functional version, called a functional limit theorem or weak invariance principle.
The $W_n$'s are elements in the Skorokhod space $\D[0,\infty)$, i.e., the space of all functions $\phi$ on $[0,\infty)$ that are right-continuous and have left-hand limits $\phi(t-)$ for every $t>0$.

We will consider two different topologies on $\D[0,\infty)$. The most commonly used topology in the literature is
Skorokhod's $J_1$-topology which is described as follows: if $\phi_n, \phi\in\D[0,\infty)$, then $\phi_n\to\phi$ in the $J_1$-topology if and only if there exists a sequence of $\{\lambda_n\}\subset\cA$, such that $$\sup_s|\lambda_n(s)-s|\to 0,\,\,\,\,\sup_{s\leq m}|\phi_n(\lambda_n(s))-\phi(s)|\to 0$$for all $m\in\mathbb{N}$, where $\cA$ is the family of strictly increasing, continuous mappings $\lambda$ of $[0,\infty]$ onto itself such that $\lambda(0)=0$ and $\lambda(\infty)=\infty$.

In contrast, the $M_1$-topology allows a function $\phi_1$ with a jump at $t$ to be approximated arbitrarily well by some continuous $\phi_2$ (with large slope near $t$).
The metric $d_{M_1}$
that generates the $M_1$-topology
on $\D[0, \infty)$ is defined using completed graphs. For $\phi\in \D[0, \infty]$ the completed
graph of $\phi$ is the set
 $$\Gamma(\phi):= \{(t, z) \in [0, \infty) \times \mathbb{R} : z = \lambda\phi(t-) + (1- \lambda)\phi(t) \text{ for some } \lambda\in [0, 1]\}$$
where $\phi(t-)$ is the left limit of $\phi$ at $t$. Besides the points of the graph
$\{(t, \phi(t)) :t \in [0, \infty)\}$, the completed graph of $\phi$ also contains the vertical line
segments joining $(t, \phi(t))$ and $(t, \phi(t-))$ for all discontinuity points $t$ of $\phi$.
We define an order on the graph $\Gamma(\phi)$ by saying that $(t_1, z_1) \leq (t_2, z_2)$ if either
(i) $t_1 < t_2$ or (ii) $t_1 = t_2$ and $|\phi(t_1-) -z_1| \leq  |\phi(t_2-) - z_2|$. A parametric
representation of the completed graph $\Gamma(\phi)$ is a continuous nondecreasing
function $(s, y)$ mapping $[0, \infty)$ onto $\Gamma(\phi)$, with $s$ being the time component
and $y$ being the spatial component. Let $\Lambda(\phi)$ denote the set of parametric
representations of the graph $\Gamma(\phi)$. For $\phi_1, \phi_2\in \D[0, \infty)$ define
$$d_{M_1}
(\phi_1, \phi_2) := \inf\{\|s_1 - s_2\|_{[0,\infty)} \vee\|u_1 - u_2\|_{[0,\infty)} : (s_i
, u_i) \in \Lambda(\phi_i), i = 1, 2\},$$
where $\|\phi\|_{[0,\infty)}= \sup\{|\phi(t)| :t \in [0, \infty)\}$. This definition introduces $d_{M_1}$
as a metric
on $\D[0, \infty)$. The induced topology is called Skorokhod's $M_1$-topology and
is weaker than the more frequently used $J_1$-topology.

\end{appendix}
\section*{Acknowledgements}

The research of H. Zhang was supported in
part by NSF CAREER grant DMS-1151762. The research of P. Jung was supported in part by NRF grant N01170220 from the Republic of Korea.

\bibliographystyle{alpha}
\bibliography{BibJuly15}

\end{document}